\documentclass[french]{article} 
\usepackage{graphicx} 

\usepackage[utf8]{inputenc}
\usepackage[T1]{fontenc}
\usepackage{lmodern}
\usepackage[a4paper]{geometry}
\usepackage[english]{babel}

\usepackage{fullpage}

\usepackage{amssymb,amsmath,mathtools,amsthm,dsfont,stmaryrd, bm}
  
\usepackage[all]{xy}
\usepackage{todonotes}

\newtheorem{theorem}{Theorem}
\newtheorem{prop}[theorem]{Proposition}

\newtheorem{lemma}[theorem]{Lemma}

\newtheorem{definition}[theorem]{Definition}

\newcommand{\E}{\mathbb{E}}

\newcommand{\G}{\mathbb{G}}

\renewcommand{\L}{\mathbb{L}}

\newcommand{\N}{\mathbb{N}}

\renewcommand{\P}{\mathbb{P}}
\newcommand{\Q}{\mathbb{Q}}
\newcommand{\R}{\mathbb{R}}
\renewcommand{\S}{\mathbb{S}}

\newcommand{\V}{\mathbb{V}}

\newcommand{\Z}{\mathbb{Z}}

\newcommand{\cA}{\mathcal{A}}
\newcommand{\cB}{\mathcal{B}}
\newcommand{\cC}{\mathcal{C}}
\newcommand{\cD}{\mathcal{D}}
\newcommand{\cE}{\mathcal{E}}
\newcommand{\cF}{\mathcal{F}}
\newcommand{\cG}{\mathcal{G}}
\newcommand{\cH}{\mathcal{H}}

\newcommand{\cL}{\mathcal{L}}

\newcommand{\cP}{\mathcal{P}}

\newcommand{\cR}{\mathcal{R}}
\newcommand{\cS}{\mathcal{S}}
\newcommand{\cT}{\mathcal{T}}
\newcommand{\cU}{\mathcal{U}}
\newcommand{\cV}{\mathcal{V}}

\newcommand{\cY}{\mathcal{Y}}
\newcommand{\cZ}{\mathcal{Z}}

\newcommand{\fE}{\mathfrak{E}}
\newcommand{\fA}{\mathfrak{A}}

\renewcommand{\1}{\mathds{1}}
\renewcommand{\d}{\text{d}}

\renewcommand{\epsilon}{\varepsilon}
\renewcommand{\phi}{\varphi}

\DeclareMathOperator{\cone}{cone}
\DeclareMathOperator{\size}{size}

\begin{document}

\title{\LARGE The $K$-th nearest neighbor random walk on a Poisson point process gets trapped}

\author{Anne-Laure Basdevant\footnote{LPSM - UMR CNRS 8001, Sorbonne Universit\'e, 4 place Jussieu, 75005 Paris, France, {\it anne.laure.basdevant@normalesup.org}}, David Coupier\footnote{Institut Mines Télécom Nord Europe, Cité Scientifique, 59655 Villeneuve d’Ascq, France, {\it david.coupier@imt-nord-europe.fr}}, Jean-Baptiste Gou\'er\'e\footnote{Institut Denis Poisson - UMR CNRS 7013, Universit\'e de Tours, Parc de Grandmont, 37200 Tours, France, {\it jean-baptiste.gouere@univ-tours.fr}} ~and Marie Th\'eret\footnote{Modal'X, Universit\'e Paris Nanterre, 92000 Nanterre, France, {\it marie.theret@parisnanterre.fr}}}

\date{}

\selectlanguage{english}

\maketitle

\begin{abstract}
The $K$-th nearest neighbor random walk $(X_n)_{n \geq 0}$ on a homogeneous Poisson point process $\chi$ on $\R^d$ ($d\geq 1$), starts at the origin and at each step picks its next Poisson point among its closest neighbors according to i.i.d. labels having the same distribution as $K$. Our main result (Theorem \ref{t:V}) states that the number of Poisson points visited by $(X_n)_{n \geq 0}$ admits an exponential decay whenever the random variable $K$ has a bounded support (BS). In particular, the $K$-th nearest neighbor random walk visits finitely many Poisson points if and only if $K$ satisfies Assumption (BS). To prove it, we introduce the key notion of {\it pioneer point} which allows us to deal with the region of $\R^d$ already explored by $(X_n)_{n \geq 0}$. Still under Assumption (BS), we also prove an exponential decay for the Euclidean length of the trajectory performed by  $(X_n)_{n \geq 0}$ (Theorem \ref{t:L}). Finally, and quite surprisingly, we exhibit an example of label distribution with bounded support for which the $K$-th nearest neighbor random walk discovers new Poisson points after a number of steps whose tail distribution is at least polynomial (Theorem \ref{t:S}).
\end{abstract}

\section{Introduction and main results}

\subsection{Motivations}
\label{s:motivations}

Let $\chi$ be an homogeneous Poisson Point Process (PPP) on $\R^d$, $d \geq 1$, with intensity $1$ w.r.t. the Lebesgue measure $\cL^d$. We work under the Palm measure, \textit{i.e.}, the PPP $\chi$ is conditioned to contain the origin $0 \in \R^d$. Let $K$ be a random variable taking values in $\N^* \coloneqq \{1,2,\dots \}$, with probability distribution $\mu$. The \textit{$K$-th nearest neighbor random walk $(X_n)_{n \geq 0}$ on the process $\chi$} starts at the origin and, at each step, picks its next Poisson point according to i.i.d. labels having the same distribution as $K$. Precisely, for any $x \in \chi$ and for any $k \in \N^*$, let us denote by $v_k(x)$ the $k$-th nearest neighbor of $x$ in $\chi$ (w.r.t. the Euclidean distance). Hence, we set
\[
X_0 \coloneqq 0 \, \mbox{ and } \, \forall n \geq 1 , \, X_n \coloneqq v_{K_n} (X_{n-1}) ~,
\]
where  $(K_n)_{n\geq 1}$ is a family of i.i.d.\ random variables with common distribution $\mu$, which is independent of the PPP $\chi$. The random variable $K_n$ is called the {\it $n$-th label}.

The main question we want to address here is the following: which conditions on the distribution $\mu$ and the dimension $d$ (the two parameters of our model) allow the $K$-th nearest neighbor random walk to visit infinitely many Poisson points, or in an equivalent way, to go out any compact set? For that purpose, let $\V$ be the range of the random walk, {\it i.e.}, the set of points of $\chi$ that are visited by $(X_n)_{n\geq 0}$:
\[
\V \coloneqq \{ x \in \chi : \, \exists n \geq 0 , \, X_n = x \} ~.
\]
We say that the $K$-th nearest neighbor random walk \textit{gets trapped} when its range $\V$ is finite. It is not difficult to see that the set $\V$ will be a.s. infinite whenever the support of the probability measure $\mu$ is unbounded. So, we assume in what follows that $\mu$ has a bounded support (we write BS for short), meaning that its supremum $M$ is finite:
\[
M \coloneqq \sup \{ k \in \N^* :\, \mu (k) >0 \} < \infty \,. \qquad \qquad \textrm{(BS)} 
\]

Let us now introduce the directed graph $\G_\mu$ including all the possible moves of the $K$-th nearest neighbor random walk. Its vertex set is given by the points of $\chi$ and its edge set contains all the couples $(x,y)$ where $x,y \in \chi$ and $y = v_k(x)$ for some integer $k$ in the support of $\mu$. Let us denote by $\mathcal{C}_0$ the cluster of the origin, \textit{i.e.}, the set of Poisson points reached by a directed path of $\G_\mu$ starting at $0$. Note that the only
randomness in $\G_\mu$ comes from the position of the points in $\chi$. The undirected version of this graph has been studied by H\"aggstr\"om and Meester \cite{HaggstromMeester} in the case where the support of $\mu$ is exactly $\{1,\ldots,M\}$. Without major changes, their arguments (namely, the proofs of Theorems 2.1 and 2.2) apply to our directed context and lead to the following statement. In the case where the support of $\mu$ is exactly $\{1,\ldots,M\}$ and for any dimension $d \geq 2$, there exists a critical integer $2 \leq \vec{M}_c(d) < \infty$ such that: for any $M < \vec{M}_c(d)$, the cluster $\mathcal{C}_0$ is a.s. finite while for any integer $M \geq \vec{M}_c(d)$, it is infinite (and then unbounded) with positive probability. So, in the subcritical phase, the cluster $\mathcal{C}_0$ being finite, there is no way for the $K$-th nearest neighbor random walk $(X_n)_{n \geq 0}$ to escape to infinity. It will get trapped with probability $1$. But this is in the supercritical case that our initial problem really makes sense. Indeed, in that case, $\mathcal{C}_0$ contains with positive probability some infinite directed paths starting from the origin that the $K$-th nearest neighbor random walk could use to escape to infinity and visit infinitely many Poisson points.

Consider the special case where $\mu$ is a Dirac measure. In this case, the only randomness comes from the PPP, and everything is encoded in the graph $\G_\mu$. More specifically, the range $\V$ is infinite if and only if there exists an infinite (directed) path starting from $0$ in $\G_\mu$, that is, if and only if the forward cluster of $0$ in $\G_\mu$
is infinite. The case $\mu=\delta_1$ is considered in \cite{HaggstromMeester} where it is shown that there is no infinite path starting from $0$ in $\G_\mu$ (such paths are called infinite descending chains in \cite{Daley-Last-05}). 
The same result is proven in the general case $\mu = \delta_k$ , for any $k \in \N^*$, in \cite{LeStum}. Finally, let us mention that in this Dirac setting, the graph $\G_\mu$ is an out-degree-one graph defined in a deterministic and translation-equivariant way from the PPP.  This kind of graph has the advantage of possessing strong structural properties and appears in various models as for instance systems of stopped paths \cite{CDG,CDLS}.

\medskip

Our main result (Theorem \ref{t:V}) asserts that Assumption (BS) appears to be also a sufficient condition for the $K$-th nearest neighbor random walk to get trapped, whatever the dimension $d$. Precisely, for any $d \geq 1$, the range $\V$ is a.s. finite if and only if the distribution $\mu $ has a bounded support. In particular, when $\mu$ has a bounded support and generates with positive probability an infinite cluster $\mathcal{C}_0$ in the directed graph $\G_\mu$, the $K$-th nearest neighbor random walk fails to take advantage of the infinite directed paths contained in $\mathcal{C}_0$. Actually, Theorem \ref{t:V} goes further and says that the cardinality of the range $\V$ admits an exponential decay under Assumption (BS).

\subsection{Main results}

Recall that we work under the Palm measure. See \cite{last_penrose_2017} for a general reference on point processes and Palm measures. For an at most countable set $\cA$, we denote by $|\cA|$ its cardinality, that may be infinite. Here is our main result:

\begin{theorem}[Number of visited points]
\label{t:V}
Let $d \geq 1$ and suppose that Assumption (BS) is satisfied. Then there exist two constants $C_1, C_2 \in (0,+\infty)$ depending\footnote{In fact we obtain constants depending more precisely on $d$, $M$ and $\mu (\{M\})$.} on the dimension $d$ and on the probability distribution $\mu$ such that, for any integer $k \geq 1$,
$$
\P \big[ |\V| \geq k \big] \leq C_1 e^{-C_2 k} ~.
$$
In particular, the following equivalence holds for any $d \geq 1$: $Assumption \,\,(BS) \iff \V \mbox{ is a.s. finite}$.
\end{theorem}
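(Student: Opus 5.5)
The plan is to show that each time the walk discovers a new Poisson point there is a uniformly positive probability, given the past, that it gets trapped among the points it already knows. For the converse direction of the equivalence, suppose the support of $\mu$ is unbounded. Then at each step the label $K_n$ exceeds the number of Poisson points in any fixed ball with probability bounded below. By Borel--Cantelli the walk leaves every compact set, so $\V$ is infinite almost surely. The rest of the plan concerns the direction under (BS).

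\textbf{Pioneer points and the exploration filtration.} Explore $\chi$ progressively along the walk. At time $n$, the information revealed is the positions of the first $M$ neighbors of every point visited so far. Equivalently, we know $\chi$ inside the union of balls $B(X_j,R_M(X_j))$, $j\le n$, where $R_M(x)$ is the distance from $x$ to its $M$-th nearest neighbor, and we know that outside these balls nothing has been revealed. Call $X_n$ a \emph{pioneer point} if it has not been visited before and $B(X_n,R_M(X_n))$ is not contained in the already explored region. Only at pioneer times does new information about $\chi$ enter. Moreover, $|\V|$ is at most $(M+1)$ times the number of pioneer points plus one, since every visited point is the origin or a $k$-th neighbor of an earlier visited point with $k\le M$. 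Theorem~\ref{t:V} therefore reduces to showing that the number $P$ of pioneer points has a geometric tail.

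\textbf{Uniform trapping probability.} The key estimate is that there exists $p=p(d,\mu)>0$ such that, at each pioneer time $\tau$ and conditionally on the past $\mathcal F_\tau$, with probability at least $p$ no further pioneer point ever appears. The trapping event I would aim for is the following. Let $x=X_\tau$ and let $y=v_M(x)$ be its $M$-th neighbor. Consider the configuration in which the $M+1$ points $x, v_1(x),\dots,v_M(x)$ form a tight cluster of small diameter $\epsilon r$, where $r$ is the distance to the rest of the known and unknown configuration. Inside such a cluster, every point's $M$ nearest neighbors are exactly the other $M$ cluster points, so the walk never leaves the cluster. Using the labels, we can force the walk to jump immediately into such a cluster. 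For example, require $K_{\tau+1}=M$, which has probability $\mu(\{M\})>0$; this is why the constants depend on $\mu(\{M\})$. Then require the fresh Poisson points near $y$, on the unexplored side, to form a tight $(M+1)$-cluster around $y$.

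\textbf{Main obstacle.} The difficulty is making this lower bound uniform over the geometry of the explored region. Near $x$, the unexplored region may be a thin sliver, and the already-revealed points may sit close to $y$ and spoil the clustering. I would first try scale invariance together with a cone argument. Since $x$ is a pioneer point, $\partial B(x,R_M(x))$ has a point outside the explored union of balls. The Poisson process outside the explored region is still fresh Poisson, by the strong Markov property of spatial exploration. A geometric lemma should then give a cone of fixed aperture, with apex on that sphere, that avoids all previous balls up to a comparable scale. This holds because the previous balls contain none of $x$'s relevant points and cannot enter too deeply. Placing the trap inside such a cone has probability bounded below independently of scale, because the Poisson process is scale-invariant in law after normalizing by $R_M(x)$, up to the event of emptiness of a region of comparable volume.

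Given $p$, the conditional bounds multiply: $\P[P\ge k]\le (1-p)^{k-1}$. Combining this with $|\V|\le (M+1)P+1$ gives the exponential decay with $C_2=-\log(1-p)/(M+1)$.
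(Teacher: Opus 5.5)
\textbf{There is a genuine gap.} Your reduction to counting pioneer points is fine. (The bound $|\V|\le (M+1)P+1$ does hold for your qualitative pioneers, but the reason is that the explored region only grows at pioneer times. So every visited point lies in some $B(X_j,r_M(X_j))$ with $X_j$ a pioneer, and each such ball contains exactly $M+1$ points. That a visited point is a $k$-th neighbour of an earlier one does not give this on its own.)

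The gap is the key estimate: a uniform $p>0$ such that, at every such pioneer time, the walk gets trapped with conditional probability at least $p$. This is false, and the cone lemma meant to supply it does not hold. Being a pioneer in your sense only says that $B(x,r_M(x))\not\subset E_n$. The fresh part $B(x,r_M(x))\setminus E_n$ can have arbitrarily small volume $\delta$. For example, let $x$ lie near a point where the boundary spheres of two large, slightly overlapping explored balls cross. Let its $M$ nearest neighbours be already revealed points of those balls; each explored ball holds $M$ points besides its centre, and these may sit near its boundary. Contrary to what you assert, the previous balls then contain all of $x$'s relevant points and approach it from both sides. The unexplored region near $x$ is a thin wedge/cusp between the two spheres, so no cone of fixed aperture fits at scale $r_M(x)$. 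With conditional probability at least $1-\delta$ nothing new is revealed, and the walk moves on among revealed points whose configuration can force it to meet further pioneers.

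The scale-invariance claim also fails, because the intensity is fixed. A trap at scale $\rho$ costs roughly $e^{-c\rho^d}$ for large $\rho$, since the fresh part of $B(x,\rho)$ must be essentially empty. At small scales it costs a power of the fresh volume. This is why the paper's constant $q(\varepsilon,A,d,M)$ in Lemma~\ref{l:trap} needs both a fresh ball of radius $\varepsilon$ and a radius $\rho\le A$. It is also why the paper needs a pigeonhole argument over the spheres through the at most $M-1$ known points of $B(X_n,\rho)$, to guarantee that $v_M(X_n)$ lands in the fresh cluster. Your trap description, with $x$ itself in the cluster and $y=v_M(x)$ used before it is revealed, does not handle this.

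\textbf{What is missing.} You must work with quantitative $(\varepsilon,A)$-pioneers, and then $|\V|\le (M+1)P+1$ is lost. You need an argument that, except with exponentially small probability, $|\V|\ge k$ forces at least $ck$ quantitative pioneers; this is Lemma~\ref{l:nice2}, the heart of the paper. The paper proves it by renormalization:
\begin{itemize}
\item A block $\Lambda_R(u)$ is \emph{$\varepsilon$-good} if all blocks within $\ell^\infty$-distance $d$ contain at least $M+1$ points, and every ball $B(x,r_M(x))$ meeting the block contains a ball of radius $2\varepsilon$ inside the block.
\item The step at which the explored region first touches a good block is an $(\varepsilon,\sqrt d R)$-pioneer (Lemma~\ref{l:nice1}).
\item Goodness is finite-range dependent and has probability close to $1$, so by Liggett--Schonmann--Stacey it dominates a high-density Bernoulli field.
\item A lattice-animal count then handles the case where many blocks are touched. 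A Poisson large-deviation bound handles the case where few blocks are touched but they contain many points.
\end{itemize}
Only combined with the geometric tail for quantitative pioneers (Lemma~\ref{l:trap2}) does this give the theorem. Without such an argument, your bound $\P[P\ge k]\le(1-p)^{k-1}$ is unsupported. Your converse direction (unbounded support implies $\V$ infinite) is fine.
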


Let us sketch the proof of Theorem \ref{t:V}. The main difficulty of the proof stems from the fact that, as it evolves, the random walk gathers information about its environment by revealing 
parts of the PPP.  This lack of independence leads us to introduce the notion of a {\em pioneer point} (see Definition \ref{d:pioneer}).  Informally, the position $x=X_n$ of the random walk at time $n$ 
is a pioneer point if it is the first time the random walk visits $x$, and if, at time $n+1$, the random walk can jump, with probability bounded away from $0$, into some ball $B$ of fixed 
radius in which the PPP remains totally unrevealed. Within this ball, again with probability bounded away from $0$, the configuration of the PPP is such that the random walk cannot escape 
from it. Thus, by the independence properties of the PPP, each time the random walk visits a pioneer point, there is a probability bounded away from $0$ that it becomes trapped at the 
next step, independently of the past (Lemma \ref{l:trap}). Therefore, the number of pioneer points has a subexponential tail (Lemma \ref{l:trap2}).

It then remains to prove that, with high probability, a macroscopic part of points visited by the $K$-th nearest neighbor random walk are pioneer points (see Lemma \ref{l:nice2}). To do it, we partition $\R^d$ into large boxes and introduce the notion of {\it good box} (see Definition \ref{d:good}) in such a way that each time the random walk approaches a good box for the first time, it must visit a pioneer point. Now, we can tune our parameters so that it becomes very likely for a box to be good and then the set of good boxes percolates in some strong sense. This forces the random walk to encounter many good boxes and so many pioneer points.

\medskip

Closely related to the number of visited points $|\V|$, we are also interested in the Euclidean length of the trajectory of the $K$-th nearest neighbor random walk $(X_n)_{n\geq 0}$ defined as
\[
\L \coloneqq \sum_{ \{x,y\} \subset \chi }  \| x-y \| \1_{\{ \exists n\geq 0 \,,\, \{X_n,X_{n+1}\} = \{x,y\} \}}
\]
where $\|\cdot\|$ denotes the Euclidean norm on $\R^d$. Notice that in the above definition, the length of each segment $[x,y]$ is counted at most once in the sum, even if the random walk $(X_n)_{n\geq 0}$ goes from $x$ to $y$ or from $y$ to $x$ several times. Note also that, by Theorem \ref{t:V}, $\L$ is almost surely finite if and only if Assumption (BS) holds. Under Assumption (BS), we actually prove that the distribution of $\L$ has an exponential tail.

\begin{theorem}[Length of the trajectory]
\label{t:L}
Let $d \geq 1$ and suppose that Assumption (BS) is satisfied. Then there exist two constants $C_3, C_4 \in (0,+\infty)$ depending\footnote{Similarly, we obtain constants depending more precisely on $d$, $M$ and $\mu (\{M\})$.} on the dimension $d$ and on the probability distribution $\mu$ such that, for any integer $k \geq 1$,
$$
\P \big[ \L \geq k \big] \leq C_3 e^{-C_4 k} ~.
$$
\end{theorem}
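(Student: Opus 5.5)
We derive Theorem \ref{t:L} from Theorem \ref{t:V}, combined with a uniform control of the edge lengths of the $M$-nearest-neighbor graph near the origin. Under (BS), each edge $[X_n,X_{n+1}]$ has length at most $R_M(X_n)$, the distance from $X_n$ to its $M$-th nearest neighbor in $\chi$. The trajectory uses at most $|\V|$ distinct vertices, and each vertex has at most $M$ possible outgoing edges. So the number of distinct undirected segments counted in $\L$ is at most $M|\V|$, and
\[
\L \le M\,|\V|\,\max_{x\in\V} R_M(x).
\]
Every visited point lies within Euclidean distance $\L$ of the origin, and also within graph distance $|\V|$ of $0$ in $\G_\mu$.

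We split the event $\{\L\ge k\}$ according to $|\V|$. Fix a small $\epsilon>0$. If $|\V|\ge \epsilon k$, Theorem \ref{t:V} gives probability at most $C_1e^{-C_2\epsilon k}$. Otherwise $|\V|<\epsilon k$, and some visited point $x$ has $R_M(x)\ge 1/(M\epsilon)$, more precisely $R_M(x)\ge k/(M\epsilon k)=1/(M\epsilon)$. This constant threshold is too weak, so we refine the bound. Order the visited points along the walk's first-visit times. The sum of the lengths of the at most $M|\V|$ edges is at least $k$, so some edge has length at least $k/(M\epsilon k)$. This is still constant, and it shows the naive split fails. The correct approach is a geometric event: if $\L\ge k$ with $|\V|\le \epsilon k$, then the sequence of distinct points visited forms a path $0=y_0,y_1,\dots,y_m$ with $m\le \epsilon k$, each $y_{i+1}$ among the $M$ nearest neighbors of $y_i$, and $\sum_i\|y_{i+1}-y_i\|\ge k/M$. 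Such a long-but-few-step $M$-nearest-neighbor path forces large empty balls.

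The main step is to show that the probability that $\G_\mu$ contains, from $0$, a directed path of $m\le \epsilon k$ steps with total length at least $k/M$ is at most $e^{-ck}$. I would discretize space into unit cubes. An edge of length $r$ from $y_i$ to an $M$-nearest neighbor implies that the ball $B(y_i,r)$ contains at most $M$ points of $\chi$. Hence a cube-scale ball of radius about $r/2$ inside it contains at most $M$ points, which has probability $e^{-cr^d}$. The path thus determines a sequence of cubes with radii $r_i$ summing to at least $k/M$, each carrying a nearly empty ball of radius about $r_i$. Counting lattice animals gives at most $C^{m}\prod_i (Cr_i)^{d}$ choices. The balls need not be disjoint, but one can extract a disjoint subfamily via a Vitali-type covering argument, keeping total radius at least $c\sum r_i$. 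Using independence of the PPP on the disjoint balls, a union bound gives
\[
\sum_{m\le\epsilon k}\ \sum_{r_1+\dots+r_m\ge k/M} C^m\prod_i r_i^d e^{-c r_i^d}\le e^{-c'k}
\]
for $\epsilon$ small, since $r^d\ge r$ for $r\ge 1$ and there are few compositions. The main obstacle is this Vitali extraction. Overlapping balls could be reused by many edges, for instance when the path oscillates. I would handle it by keeping, among edges sharing overlapping balls, the one of largest radius. Any single large edge already costs $e^{-cr^d}$, and summing the kept radii still dominates a constant fraction of $\L$ after grouping edges by dyadic scale. Combining both cases yields $\P[\L\ge k]\le C_3e^{-C_4k}$.
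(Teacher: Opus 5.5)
Your skeleton matches the paper's: split on $|\V|$ via Theorem~\ref{t:V}, then union-bound, with each visited vertex carrying a nearly empty ball that costs $e^{-cr^d}$, and use $r^d\ge r-1$ to beat the entropy factor $C^m$. However, two steps fail as written.

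\textbf{The structure and the length inequality.} The first-visit sequence $y_0,\dots,y_m$ is not a path of $\G_\mu$. A new point can be discovered from any earlier point, e.g.\ $0\to a\to 0\to b$ with $b$ a neighbor of $0$ but not of $a$. The right object is the first-visit \emph{tree}, and you must sum over rooted trees (at most $4^m$ of them). Moreover, $\L\ge k$ does not yield $\sum_i\|y_{i+1}-y_i\|\ge k/M$, because $\L$ also counts segments traversed between already visited points. What you can prove is $\L\le M\sum_{x\in\V} r_M(x)$. So the radii to track are the $r_M(x)$ of the visited vertices. The ball $B(x,r_M(x))$ contains exactly $M+1$ points of $\chi$ and all children of $x$, which is precisely the set-up of Lemma~\ref{l:en_tree_v1}.

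\textbf{The overlap control, which is the real missing idea.} Your displayed sum charges $e^{-cr_i^d}$ to every $i$, which is inconsistent with keeping only a disjoint subfamily. Also, the greedy rule ``keep the largest ball'' does not preserve a fraction of $\sum_i r_i$. Suppose $B(x,r_M(x))$ has radius $\rho$ and its boundary sphere carries points spaced by $\rho'\ll\rho$. Then it meets about $(\rho/\rho')^{d-1}$ balls $B(y,r_M(y))$ of radius about $\rho'$, whose radii add up to about $\rho(\rho/\rho')^{d-2}\gg\rho$ when $d\ge 3$; greedy selection discards all of them. Your dyadic ``charging'' remark would need a quantitative packing bound on the balls meeting a given one, and you do not supply it.

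The paper's key input is Stone's lemma (Lemma~\ref{l:stone}). Covering $\R^d$ by finitely many cones of small opening angle shows, deterministically, that any $y\notin\chi$ lies in at most $M\kappa_8(d)$ of the balls $B(x,r_M(x))$, $x\in\chi$. Hence $\cL^d\bigl(\bigcup_i B(x_i,r_i)\bigr)\ge (M\kappa_8(d))^{-1}\sum_i\cL^d(B(x_i,r_i))$ whenever $r_i\le r_M(x_i)$. With this, no disjoint extraction is needed. The union contains at most $(m+1)M$ points of $\chi$ besides the centers, and a single Chernoff bound on this Poisson count gives $e^{(m+1)M-c\sum_i r_i^d}$, which is exactly your product. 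With these two fixes, your unit-cube discretization is an acceptable substitute for the Mecke formula. You must replace each $B(y_i,r_i)$ by a deterministic ball contained in it; the overlaps of those balls are still bounded by Stone's lemma.
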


Thanks to Theorem \ref{t:V}, the proof of Theorem \ref{t:L} boils down to show that, with high probability, the random walk cannot have a long trajectory, say $\L \geq A k$ for some large constant $A$, while visiting not too many vertices, say $|\V| \leq k$. 
The proof proceeds as follows. We associate with each trajectory of the random walk a spanning tree of $\V$. By construction, the tree is a subgraph of the
graph $\G_\mu$ defined in Section \ref{s:motivations}. In particular, if $x$ is the parent of $y$ in the tree, then $y$ belongs to $B(x,r_M(x))$,  the closed Euclidean ball with center $x$ and radius $r_M(x) = \| x - v_M(x) \|$ (the distance between $x$ and its $M$-th closest Poisson point).
We then take a (continuous) union bound over all these trees. To control the complexity of the union bound, we need to show that the balls $B(x,r_M(x))$ have limited overlap.
This latter fact is guaranteed by a geometric result (known in the literature as the Stone's lemma \cite{Devroye-Gyorfi-Lugosi} and recalled here in Lemma \ref{l:stone}) asserting that the number of balls $B(x,r_M(x))$, with $x \in \chi$, containing a given $y \in \R^d$ is bounded above by a deterministic constant depending only on $M$ and $d$.

\medskip

To complete our study, we also focus on the number of steps needed by the random walk to discover its whole range $\V$:
\[
\S \coloneqq \inf \{ n\in \N \,:\, \V = \{ X_0, \dots , X_n\} \} ~.
\]
When the label distribution $\mu$ is the Dirac measure at $k$, for some $k\in\N^*$, we have $\S=|\V|-1$. Hence, by Theorem \ref{t:V}, $\S$ has an exponential tail.
However, we prove that this behavior does not extend to arbitrary distribution $\mu$ satisfying Assumption (BS) by exhibiting a simple example of a label distribution $\mu$ for which the tail distribution of $\S$ decays at least polynomially. 
  
\begin{theorem}[Number of steps to discover the whole range]
\label{t:S}
Let $d\geq 1$ and assume that $\mu \coloneqq  (1-p) \delta_1 + p \delta_2$, for a given $p\in (1/2,1)$. Then there exist two constants $C_5, C_6 \in (0,+\infty)$ depending on the dimension $d$ and the parameter $p$ such that, for any $k \in \N^\ast$,
\[
\P \big[ \S \geq k \big] \geq C_5 k^{-C_6} ~.
\]
\end{theorem}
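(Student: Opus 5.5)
We prove Theorem \ref{t:S} by exhibiting, with probability at least polynomially small in $k$, a local configuration of $\chi$ that keeps the walk oscillating for about $k$ steps before it finds a new point. With $p>1/2$, the label $2$ is more likely than the label $1$. The idea is to build a configuration in which the walk shuttles among a few points. In this configuration some specific rare label pattern is needed to reveal the last point of $\V$, and the waiting time for that pattern has a heavy tail.

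\textbf{The configuration.} Take three points $a,b,c$ and a fourth point $e$ such that the following holds, with all other Poisson points far away:
\[
v_1(a)=b,\quad v_2(a)=c,\qquad v_1(b)=a,\quad v_2(b)=c,\qquad v_1(c)=b,\quad v_2(c)=e,
\]
and such that from $e$ the walk can only return into $\{a,b,c,e\}$ or to points already seen. A cleaner design is a one-dimensional chain $x_0=0,x_1,\dots,x_m$ with geometrically increasing gaps, placed so that from $x_i$ the nearest neighbour is $x_{i-1}$ and the second nearest is $x_{i+1}$. Then label $1$ moves the walk backward and label $2$ moves it forward. The walk is then a biased birth--death chain with drift towards the far end, which sends it away from $0$ rather than trapping it. So the bias should be reversed: place the chain so that label $2$ (probability $p$) moves the walk back toward $0$ and label $1$ moves it forward. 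This is achieved by letting the gaps \emph{decrease} away from $0$ appropriately, so that the walk is pushed towards $0$ with drift $2p-1>0$. Reaching the endpoint $x_m$, which is the last new point, then takes about $(p/(1-p))^{m}$ steps. At $x_m$ and at $0$, local configurations of bounded size make the walk trapped after the discovery, so the discovery of $x_m$ is exactly the time $\S$.

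\textbf{Probability count.} Choose $m\approx \log k/\log(p/(1-p))$, so that the hitting time of $x_m$ from $0$ exceeds $k$ with probability bounded below. This follows from standard gambler's ruin estimates: the expected return number and the escape probability per excursion are of order $((1-p)/p)^m\approx 1/k$, so $\P[\text{time}\geq k]\geq c$ given the event $\{x_m\text{ reached}\}$. We also need $\P[x_m\text{ is reached at all}]$, which is not small because the chain is finite and trapped. The probability that the PPP realises the required chain of $m$ points in prescribed small windows, with the surrounding region empty and the end-gadgets present, is at least $c^{m}=k^{-C}$, since each point costs a constant factor and the empty regions have total volume $O(m)$. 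Taking the product gives $\P[\S\geq k]\geq C_5k^{-C_6}$.

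\textbf{Main obstacle.} The hardest step is the geometric construction in $\R^d$. It must make label $2$ point toward $0$ and label $1$ point away from $0$ at every interior point, while keeping the nearest-neighbour relations stable under small perturbations of positions, so that the configuration has positive probability. It must also guarantee that the walk cannot escape elsewhere, which means every $v_1,v_2$ of every chain point lies in the chain. I would try points on a line with gaps $g_i$ chosen so that $x_{i+1}$ is nearest and $x_{i-1}$ is second nearest; this needs $g_{i+1}<g_i$ with $g_i<g_{i+1}+g_{i+2}$, so the gaps decrease slowly, for instance $g_i=\lambda^i$ with $\lambda\in(0.62,1)$. Since only $m=O(\log k)$ points are involved, the total scale stays bounded, and the end conditions can be handled by small fixed gadgets.
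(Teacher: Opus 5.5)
\textbf{Your probability count fails, and the problem is the design itself, not just the choice of gaps.} You require label $2$ to point back toward $0$ at \emph{every} interior site. For collinear points this forces strictly decreasing gaps $g_1>g_2>\dots>g_m$.

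With $g_i=\lambda^i$, the margins in the two neighbour inequalities at $x_i$ are $(1-\lambda)\lambda^i$ and $(\lambda+\lambda^2-1)\lambda^i$. So $x_i$ must lie in a window of radius $O(\lambda^i)$, which costs a factor of order $\lambda^{di}$, not a constant. The product over $i\le m$ is $e^{-\Theta(m^2)}=k^{-\Theta(\log k)}$ for $m\asymp\log k$. Bounded total scale controls the cost of the empty region, but not the cost of precision.

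No other choice of decreasing gaps rescues the design. In $d=1$, under the Palm measure the gaps to the right of $0$ are i.i.d.\ exponential, so the event $\{g_1>\dots>g_m\}$ alone has probability exactly $1/m!=k^{-\Theta(\log\log k)}$ when $m\asymp\log k$. And $m$ of order $\log k/\log(p/(1-p))$ is forced, because the crossing time of such a chain is of order $(p/(1-p))^m$. So your scheme yields at best $k^{-c\log\log k}$, which decays faster than any polynomial.

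\textbf{The missing idea is that the backward drift only needs to hold on average over one period of a \emph{periodic} pattern.} Such a pattern uses boundedly many local shapes. Each point then costs a constant factor, and $3L$ points plus an empty cylinder of volume $O(L)$ cost $e^{-O(L)}$.

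The paper repeats the gaps $2,4,5$. Label $2$ sends the walk backward at $x_{3i}$ and forward at $x_{3i+1}$ and $x_{3i+2}$. The three-step computation of Lemma \ref{lemm:defZ} shows that the chain observed at multiples of $3$ advances with probability $p$ and retreats with probability $1-p$. This is a reflected biased walk whose crossing time exceeds $e^{cL}$ with probability bounded below, and taking $L\asymp\log k$ gives the polynomial bound.

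This pattern drifts toward $0$ only for $p<1/2$, and the paper's Section \ref{s:S} is in fact written for $p\in(0,1/2)$. For the stated range $p\in(1/2,1)$, use the mirror pattern with gaps $5,4,2$: label $2$ then points backward at two sites per period and forward at one. The same computation gives advance probability $1-p<1/2$. You must also enlarge the empty cylinder by a constant so that it contains $B(0,10)$, because the second neighbour of $x_0$ now sits at distance about $9$.
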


The proof of Theorem \ref{t:S} consists in constructing an event on which $\V$ is an explicit set 
\[
\{0=z_0,z_1,\dots,z_{3L}\}
\]
whose exploration by the random walk is very slow.
We refer to Figure \ref{fig:bonenvironnement}. 
On this event, the point process restricted to some box only contains $3L+1$ points - namely the points of the set mentioned above - arranged in an almost aligned configuration with suitably chosen spacings.
When the random walk is in $z_i$ for some $i \in \{1,\dots, 3L-1\}$, it jumps to the left ($z_{i-1}$) or to the right ($z_{i+1}$) with a mean efficient bias to the left (see Proposition \ref{prop:defY} for a precise statement).
The exploration of $\V$ is therefore long. More precisely,
on this event of probability at least $\exp(-cL)$, the time needed to explore all $\V$ is at least $\exp(c'L)$. This yields the result.

\subsection{Some related works}

We provide in this section a brief overview of related models involving random walks on Poisson point processes, or on the underlying graph along which they move.

\paragraph{The $(k_1,k_2)$-nearest neighbor graph.}
In \cite{JT}, B. Jahnel and A. Tóbiás consider undirected graphs with degrees bounded by two, constructed from a stationary point process on $\R^d$ via equivariant edge-drawing rules.  The point process $X$ is assumed to satisfy mild assumptions, namely finite intensity, deletion tolerance, and nonequidistant property. They prove (see the proof of Theorem 2.5 in \cite{JT}) that such graphs a.s.\ contain only finite connected components (in other words, they do not percolate), provided that they are edge-preserving. The edge-preserving property means that  any edge $\{x,y\}$ of the graph remains present after removing any subset of vertices of $X\setminus\{x,y\}$. See Definition 3.2 in \cite{JT} for details. As observed by the authors (see Example 4.3 in \cite{JT}) the $(k_1,k_2)$-nearest neighbor graph (we give the definition below) does not satisfy the edge-preserving property, and then does not fall within the scope of their result. 

When the underlying process is a homogeneous PPP $X$ on $\R^d$, Theorem \ref{t:V} yields the absence of percolation in the 
$(k_1,k_2)$-nearest neighbor graph. Let us define the graph and then provide the proof. Let $0 < k_1 < k_2$ be two integers. The $(k_1,k_2)$-nearest neighbor graph $G_{k_1,k_2}(X)$ is an undirected graph whose vertex set is given by $X$ and edge set is defined as follows. For any $x,y \in X$, $\{x,y\}$ is an edge of $G_{k_1,k_2}(X)$ if and only if $x \in \{v_{k_1}(y),v_{k_2}(y)\}$ and $y \in \{v_{k_1}(x),v_{k_2}(x)\}$. We point out that $G_{k_1,k_2}(X)$ is not edge-preserving whenever $(k_1,k_2) \not= (1,2)$.

\begin{prop}
    \label{prop:JT}
    Let $X$ be a homogeneous PPP on $\R^d$.
    For any integers $0 < k_1 < k_2$, the $(k_1,k_2)$-nearest neighbor graph $G_{k_1,k_2}(X)$ a.s.\ contains only finite connected components.
\end{prop}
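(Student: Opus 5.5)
We derive Proposition \ref{prop:JT} from Theorem \ref{t:V} applied to $\mu \coloneqq \frac12 \delta_{k_1} + \frac12 \delta_{k_2}$, which satisfies Assumption (BS) with $M = k_2$.

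\emph{Key observation: every point of the component is visited with positive probability.} Fix the configuration $X$ and work under the Palm measure, so that $0 \in X$. Let $\cC$ be the connected component of $0$ in $G_{k_1,k_2}(X)$. Take $x \in \cC$ and an undirected path $0 = y_0, y_1, \dots, y_m = x$ in $G_{k_1,k_2}(X)$. By definition of the edges, $y_{i+1} \in \{v_{k_1}(y_i), v_{k_2}(y_i)\}$ for every $i$. Hence there are labels $j_1,\dots,j_m \in \{k_1,k_2\}$ with $y_{i+1} = v_{j_{i+1}}(y_i)$. Conditionally on $X$, the event $\{K_1 = j_1, \dots, K_m = j_m\}$ has probability $2^{-m} > 0$, and on this event $X_m = x$. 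So every $x \in \cC$ satisfies $\P[x \in \V \mid X] > 0$. The undirected graph needs no directedness for this, since each edge can be traversed in both directions by the walk.

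\emph{From an infinite component to an infinite range.} Suppose $\cC$ is infinite with positive probability. Since every vertex has degree at most $2$, an infinite connected component is a one-sided or two-sided infinite path. In particular, $0$ starts an infinite simple path $0 = y_0, y_1, y_2, \dots$ with $y_{i+1} \in \{v_{k_1}(y_i), v_{k_2}(y_i)\}$, and the labels are determined by the path.

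A single infinite label sequence has probability zero, so we cannot conclude directly that $\V$ is infinite. We use the quantitative bound instead. For each $m$, following the path for $m$ steps gives
\[
\P\big[|\V| \geq m+1 \mid X\big] \geq 2^{-m}.
\]
Theorem \ref{t:V} only gives $\P[|\V| \geq m+1] \leq C_1 e^{-C_2 (m+1)}$, and $2^{-m}$ may decay faster than this, so there is no contradiction yet.

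To strengthen the lower bound, use the degree-two structure. From $y_i$, both choices of label stay inside $\cC$. At a point of degree $2$, both $v_{k_1}(y_i)$ and $v_{k_2}(y_i)$ are neighbors of $y_i$ in $G_{k_1,k_2}(X)$. Conditionally on $X$, the walk restricted to an infinite path component is then a nearest-neighbor random walk on a path with i.i.d. fair choices of direction. Each step moves it to a graph-neighbor, and it never leaves $\cC$. Two cases remain.
\begin{itemize}
\item If $\cC$ is two-sided infinite, the walk is a simple symmetric random walk on $\Z$, which a.s. visits infinitely many sites.
\item If $\cC$ is one-sided infinite, the endpoint has degree $1$. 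Its other label leads outside $\cC$, which may let the walk escape the component. This case needs care.
\end{itemize}

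\emph{Main obstacle: vertices of degree less than $2$, and the plan to handle it.} Under the Palm measure, in the one-sided case, apply the argument at a typical point far from the endpoint. Any infinite component contains infinitely many degree-$2$ points. By stationarity and mass transport, if infinite components exist with positive probability, then with positive probability $0$ is a degree-$2$ point of an infinite component. Outside the component, the walk's moves can only add further visited points, never fewer. Before it first reaches the endpoint, the walk on the component behaves as a simple random walk, and that walk a.s. visits infinitely many sites. So $\V$ is infinite with positive probability.

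This contradicts the almost sure finiteness of $\V$ given by Theorem \ref{t:V}. Hence $G_{k_1,k_2}(X)$ a.s. contains only finite components. The countable union over the points of $X$ is passed from the Palm statement to the stationary one by the refined Campbell theorem.
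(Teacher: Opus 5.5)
\paragraph{Comment on the proposal.}
Your two-sided case is correct and is exactly the paper's final step. At a degree-$2$ vertex $x$, both $v_{k_1}(x)$ and $v_{k_2}(x)$ are graph neighbours. So the walk never leaves a bi-infinite component and is a simple symmetric random walk on it. Hence $\V$ is infinite, which contradicts Theorem~\ref{t:V}.

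\paragraph{The gap: the one-sided case.}
You claim that the walk started at a degree-$2$ vertex ``a.s.\ visits infinitely many sites before it first reaches the endpoint''. This is false. A simple symmetric random walk on a half-line, started at graph distance $j$ from the endpoint, is recurrent. It hits the endpoint in finite time almost surely, so it visits only finitely many sites before doing so. At the endpoint, one label is not an edge of the graph and sends the walk out of the chain. From then on nothing prevents it from getting trapped elsewhere. So ``$\V$ is infinite with positive probability'' does not follow. Knowing that $0$ is a degree-$2$ point with positive probability does not repair this.

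\paragraph{Missing idea, as in the paper.}
Use the mass-transport principle to rule out one-sided infinite components altogether. Let every vertex of such a component send unit mass to its unique degree-$1$ vertex. The expected mass sent from $0$ is at most $1$. The mass received by an endpoint would be infinite, so under the Palm measure $0$ is such an endpoint with probability $0$. Hence there are a.s.\ no one-sided infinite components. Only bi-infinite chains of degree-$2$ vertices remain, and your two-sided argument finishes the proof.

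\paragraph{Alternative fix.}
You could keep the one-sided case and make it quantitative, which you started to do but abandoned. Suppose one-sided infinite components exist with positive probability. Then for some $j\geq 1$, with positive Palm probability $0$ lies at graph distance $j$ from the endpoint of such a component. On that event, gambler's ruin gives $\P[|\V|\geq m+1 \mid X]\geq j/(j+m)$. So $\P[|\V|\geq m+1]$ decays at most polynomially in $m$, which contradicts the exponential bound of Theorem~\ref{t:V}.
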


\begin{proof}
    We first follow the argument of \cite{JT}, Section 3.
    As the degree of each vertex is at most two,
    any unbounded component of $G_{k_1,k_2}(X)$ is of one of two types: either it contains  a single vertex of degree $1$ and is isomorphic to $\N$, or it contains no vertex of degree $1$ and is isomorphic to $\Z$.
    By a standard application of the mass-transport principle, the former case can be ruled out.
    Therefore,
    \begin{equation}\label{e:infinite-implies-Z}
        \text{all infinite clusters are bi-infinite chains of vertices of degree two.}
    \end{equation}
    
    At this stage, B. Jahnel and A. Tóbiás argue by contradiction, assuming that $G_{k_1,k_2}(X)$ contains an infinite connected component $\mathcal{C}$ with positive probability and thus with probability one. They then use in their context the edge-preserving property of the graph to break $\mathcal{C}$ into several pieces, one of which must be an infinite connected component having a vertex of degree one, which is forbidden by the previous argument. As explained above, this argument does not apply when $(k_1,k_2) \neq (1,2)$. 
    
    Instead, we proceed as follows. 
    Let $K$ be a random variable taking the values $k_1$ and $k_2$, each with probability one half. 
    Consider the $K$-th nearest neighbor random walk on $X$ starting from a given point $x_0$ of $X$ and denote by $\V(x_0,X)$ its range.
    By Theorem \ref{t:V}%
    \footnote{
        Let us keep the notation $\V$ for the range of the $K$-th nearest neighbor random walk on $\chi$ (the PPP under the Palm measure) started from $0$. 
        By Theorem \ref{t:V}, $\V$ is almost surely finite. Therefore
        \[
            0=\P[\V \text{ is infinite}] = \E\left[\sum_{x \in X \cap [0,1]^d} \1_{\{\V(x,X) \text{ is infinite}\}}\right].
        \]
        Thus, $\V(x,X)$ is almost surely finite for any $x \in X \cap [0,1]^d$.
        By stationarity, the same holds for any $x \in X$.
    },
    \begin{equation} \label{e:V-finite}
        \text{for all $x_0$ in $X$, $\V(x_0,X)$ is finite.}
    \end{equation}

    We now prove
    \begin{equation} \label{e:degree-2-V=C}
        \text{for all $x_0$ in a degree-$2$ connected component $\cC$ of $G_{k_1,k_2}(X)$, $\cC=\V(x_0,X)$}.
    \end{equation}
    Proposition \ref{prop:JT} is a consequence of \eqref{e:infinite-implies-Z}, \eqref{e:V-finite} and \eqref{e:degree-2-V=C}.
    Let $x_0$ and $\cC$ be as in \eqref{e:degree-2-V=C}. 
    Since any vertex of $\cC$ has degree $2$, the $k_1$-th and $k_2$-th nearest neighbor of any point of $\cC$ also belong to $\cC$. Hence, the $K$-th nearest neighbor random walk started from any $x_0 \in \mathcal C$ remains in $\mathcal C$ and defines an irreducible Markov chain on $\mathcal C$.  So, its range $\V(x_0,X)$ is finite  if and only if   $\mathcal{C}$ is finite and then $\mathcal{C}=\V(x_0,X)$.
\end{proof}

\paragraph{The greedy walk problem.} Consider a homogeneous PPP $\chi$ under the Palm measure. 
A greedy random walk starts at the origin and, at each step, deletes the point at its position and moves to the closest remaining point of $\chi$.
We refer to \cite{BFL-Greedy} for a review.
One of the main questions about greedy random walks is whether all points of $\chi$ are eventually deleted. 
In dimension $d=1$, it is easy to prove that the answer is negative.
The problem remains however open for dimension $d\ge 2$.

Let us highlight a result in dimension $d=1$ for a variant of the model in which some points of $\chi$
are double points, meaning that the greedy walk has to visit them twice to delete them.
More precisely, each point is a double point with probability $p>0$, independently of everything else.
The model is designed to mimic the behavior of the original model on $\R\times[0,\epsilon]$.
The authors of \cite{RST-Greedy} prove that this modified greedy walk deletes all the points.

\paragraph{Other random walks on Poisson point processes.} 
Let us mention some other related works concerning random walks on a point process in $\R^d$. In \cite{CFG}, jumps between two points $x, y$ of the point process occur with probability proportional to $\varphi(|x-y|)$ for some appropriate function $\phi$. The resulting random walk is thus reversible. The authors establish transience and recurrence results depending on the parameters of the model.

In \cite{Rousselle-recurrence}, \cite{Rousselle-quenched} and \cite{Rousselle-annealed} the author investigates random walks on graphs, such as the Delaunay graph, associated with point processes with suitable properties, such as Poisson point processes. He proves recurrence and transience criterion as well as quenched and annealed invariance principles.

\subsection{Notations, organisation of the paper}

The space $\R^d$ is equipped with the Euclidean norm $\| \cdot \|$. Occasionally, we also consider the $\infty$-norm $\| \cdot\|_\infty$ defined by $\| x \|_\infty = \sup \{ |x_i| \,,\, i \in \{ 1,\dots , d  \} \}$ for any $x \in \R^d$ with coordinates $(x_1, \dots , x_d)$. We denote by $d_2$ (resp. $d_{\infty}$) the distance associated with the Euclidean norm $\| \cdot \|$ (resp. the $\infty$-norm $\| \cdot\|_\infty$). Let us denote by $B(x,r)$ the closed Euclidean ball with center $x\in \R^d$ and radius $r\geq 0$, and by $S(x,r) = \partial B (x,r)$ the corresponding Euclidean sphere. Given $x \in \chi$ and $k \in \N^*$, recall that $v_k(x)$ is the $k$-th nearest neighbor of $x$ in $\chi$ (w.r.t. the Euclidean distance) that is defined without any ambiguity with probability $1$. Let also $r_k (x)$ be the Euclidean distance between $x$ and its $k$-th nearest neighbor in $\chi$, i.e., $r_k(x) \coloneqq \| x- v_k(x) \|$.

\medskip

The structure of the paper is straightforward. Section \ref{s:V} is devoted to the proof of Theorem \ref{t:V} which appears as an immediate consequence of Lemmas \ref{l:trap2} and
\ref{l:nice2}. In Section \ref{s:Pioneer}, the key notion of {\it pioneer point} is introduced (Definition \ref{d:pioneer}). Section \ref{s:L}, in which the Stone's lemma is recalled (Lemma \ref{l:stone}), is devoted to the proof of Theorem \ref{t:L}. Section \ref{s:S} is devoted to the proof of Theorem \ref{t:S}. The special Poissonian environment is described in Section \ref{s:FavorableEnv} and its probability is lower bounded in Lemma \ref{l:probaH}. Actually, proofs of Theorems \ref{t:V} and \ref{t:L} only work in dimension $d \geq 2$. The particular (and much easier) case of dimension $d = 1$ is treated separately in Section \ref{s:dim1}.

\section{Dimension 1}
\label{s:dim1}

Several arguments used to prove Theorems \ref{t:V} and \ref{t:L} in dimension $d \ge 2$ fail when $d = 1$. However, in this case, more elementary arguments allow us to establish both results. We present them in this short section, starting with the following lemma.

\begin{lemma} Assume that Assumption (BS) holds.
Let $(X_n)_{n \ge 1}$ be a $K$-th nearest neighbor random walk in dimension $d=1$. 
Then there exists a constant $\kappa_0 \in (0,+\infty)$, depending only on $M$, such that for any $k \in \N^\ast$,
\[
\P \big[ \sup_{n \ge 0} |X_n| \ge k \big] \le 2 e^{-\kappa_0 k}.
\]
\end{lemma}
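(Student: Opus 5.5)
In dimension one I would use a purely deterministic barrier argument. Write the points of $\chi$ on the positive half-line as $0=p_0<p_1<p_2<\dots$, with gaps $g_i \coloneqq p_i-p_{i-1}$, which under the Palm measure are i.i.d.\ exponential variables of parameter $1$. Say that index $i\ge 1$ is a \emph{barrier} if the gap $g_i$ is larger than the sum of the $M$ gaps on each side of it:
\[
g_i > \sum_{j=1}^{M} g_{i-j} \quad\text{and}\quad g_i > \sum_{j=1}^{M} g_{i+j},
\]
with the convention that the points on the left may also include negative Poisson points. The first condition only matters if $i > M$; to avoid boundary issues I would restrict to $i\ge M+1$.

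The key deterministic claim is this: if $i$ is a barrier, then no point $x\le p_{i-1}$ has any of its $M$ nearest neighbours in $[p_i,\infty)$. Indeed, take $x=p_m$ with $m\le i-1$. Its $M$ nearest neighbours lie within distance $r_M(x)$. The points $p_{m-1},\dots,p_{m-M}$, or the $M$ points to its left in general (including negative ones), are all within distance at most $\sum_{j\le M} g_{m-j+1}$-type sums. More simply, the $M$ points immediately to the left of $p_{i-1}$ are within distance $\sum_{j=1}^M g_{i-j}<g_i$ of $p_{i-1}$. For general $x\le p_{i-1}$, the $M$ points immediately to the left of $x$ are closer to $x$ than $p_i$ is, because the distance from $x$ to $p_i$ is at least $g_i$. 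The case of ties is handled similarly. Hence $r_M(x)<\|x-p_i\|$, so the walk, which only uses labels $K\le M$, can never cross from $(-\infty,p_{i-1}]$ into $[p_i,\infty)$.

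The same holds symmetrically on the negative side. So $\sup_n X_n < p_k$ whenever some barrier index lies in $\{1,\dots,k\}$, and since $p_k \ge k$ is not guaranteed, I would phrase the event as $\{\sup_n X_n\ge k\}$. On the complement of ``$p_{\lfloor k/2 \rfloor}\ge k$'', which has exponentially small probability by a Cramér bound on sums of exponentials, reaching $k$ requires that no index up to $k/2$ is a barrier.

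It remains to bound the probability that there is no barrier among indices $M+1,\dots,\lfloor k/2\rfloor$. The barrier events depend on blocks of $2M+1$ consecutive gaps, so I would choose indices spaced $2M+1$ apart. These events are then i.i.d., each with probability $q=q(M)>0$ (a fixed positive probability that an $\mathrm{Exp}(1)$ variable exceeds each of two independent $\mathrm{Gamma}(M,1)$ variables). This gives a bound $(1-q)^{ck/M}$. Adding the two sides, positive and negative, and absorbing the Cramér term yields $2e^{-\kappa_0 k}$ for $k$ large; small $k$ are handled by lowering $\kappa_0$, since the probability is at most $1$.

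The main obstacle is the geometric step: the walk might attempt a long jump from a point far to the left of $p_{i-1}$. This is exactly why the barrier condition has to compare $g_i$ with neighbouring gaps rather than using a fixed threshold. The justification is that every $x\le p_{i-1}$ has at least $M$ points to its left, or in the interval between it and $p_{i-1}$, that are closer than $p_i$. I would check carefully that, for such $x$, the $M$ closest points are found among the nearest $M$ on either side within $(-\infty,p_{i-1}]$.
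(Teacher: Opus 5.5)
Your proof is correct and takes a different route from the paper's. However, one sentence in your geometric step is false and must be replaced by the argument you only sketch in your last paragraph.

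\textbf{The false step.} You claim that for every $x\le p_{i-1}$, ``the $M$ points immediately to the left of $x$ are closer to $x$ than $p_i$ is.'' This fails. Take $M=1$, $x=p_{i-2}$, $g_{i-2}=10$, $g_{i-1}=0.1$ and $g_i=1$. Then $i$ is a barrier, but the left neighbour of $x$ is at distance $10>1.1=p_i-x$.

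\textbf{The fix.} Split according to $x=p_m$ with $m\le i-1$.
\begin{itemize}
\item If $m\le i-1-M$, the $M$ points $p_{m+1},\dots,p_{m+M}$ lie in $(x,p_{i-1}]$. They are therefore strictly closer to $x$ than $p_i$.
\item If $i-M\le m\le i-1$, use the $M$ points of $\{p_{i-1-M},\dots,p_{i-1}\}\setminus\{x\}$. Those to the right of $x$ are trivially closer than $p_i$. Those to the left are at distance at most $\sum_{j=i-M}^{m}g_j\le\sum_{j=i-M}^{i-1}g_j<g_i\le p_i-x$.
\end{itemize}
In both cases $r_M(x)<p_i-x$, so no label $K\le M$ sends $x$ into $[p_i,\infty)$.

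\textbf{A simplification.} Only the left half of your barrier condition is used. Since $X_0=0\le p_{i-1}$, reaching $[p_i,\infty)$ requires a single jump from $(-\infty,p_{i-1}]$ into $[p_i,\infty)$. You can therefore drop the right-hand condition. This gives $q=\P[E>\Gamma(M,1)]=2^{-M}$ and lets you space indices $M+1$ apart.

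\textbf{The rest is fine:} independence from disjoint blocks of i.i.d.\ gaps, the Cram\'er bound on $p_{\lfloor k/2\rfloor}$, symmetry, and lowering $\kappa_0$ to cover small $k$.

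\textbf{Comparison with the paper.} The paper works directly in space with the events $\mathcal{A}_k$: no point in $(3k,3k+1)$, exactly $M+1$ points in $(3k+1,3k+2)$, and none in $(3k+2,3k+3)$. The $M+1$ clustered points form a trap. Any Poisson point to their left sees all of them before anything beyond $3k+3$, so the walk cannot cross. Because the $\mathcal{A}_k$ live on disjoint intervals at deterministic locations, the paper gets independence and barrier positions immediately, with explicit probability $e^{-3}/(M+1)!$, and needs no conversion from indices to positions.

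Your gap-based barrier exploits the i.i.d.\ exponential gaps instead, which also yields independence at once. The cost is the extra large-deviation step ensuring that a barrier with index at most $k/2$ lies below position $k$. Both approaches give a $\kappa_0$ depending only on $M$.
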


\begin{proof}
By symmetry, 
\[
\P \big[ \sup_{n \ge 0} |X_n| \ge k \big] \le 2 \,\P\big( \sup_{n \ge 0} X_n \ge k \big).
\]
For $k \ge 0$, let us denote by $\mathcal{A}_k$ the event
\[
\mathcal{A}_k := \{ |\chi \cap (3k,3k+1)| = 0,\ |\chi \cap (3k+1,3k+2)| = M+1,\ |\chi \cap (3k+2,3k+3)| = 0 \}.
\]
The events $(\mathcal{A}_k)_{k \ge 0}$ are independent and satisfy
$
\P(\mathcal{A}_k) = \frac{e^{-3}}{(M+1)!}.
$
Moreover, on the event $\mathcal{A}_k$, the walk cannot cross the interval $(3k+2,3k+3)$, and hence cannot reach Poisson points that are located on $ (3k+2,+\infty)$. Therefore,
\[
\P\big( \sup_{n \ge 0} X_n \ge 3k+2 \big) \le \left(1 - \frac{e^{-3}}{(M+1)!} \right)^k.
\]
This yields the desired exponential bound.
\end{proof}

We are now in a position to prove Theorems \ref{t:V} and \ref{t:L} in dimension $d=1$.

\begin{proof}[Proof of Theorem \ref{t:V} in dimension $d=1$.]
Let $\V$ be the set of points of $\chi$ visited by the walk. We write
\[
\P(|\V| \ge k) \le \P\big( \sup_{n \ge 0} |X_n| \ge k/4 \big) 
+ \P\big( |\chi \cap [-k/4,k/4]| \ge k \big).
\]
The first term is exponentially small in $k$ by the previous lemma. 
For the second term, we note that under the Palm measure $\P$, the random variable $|\chi \cap [-k/4,k/4]| - 1$ follows a Poisson distribution with parameter $k/2$. Standard large deviation estimates for Poisson random variables then imply that
$\P\big( |\chi \cap [-k/4,k/4]| \ge k \big)
$
also decays exponentially fast in $k$. 
\end{proof}

\begin{proof}[Proof of Theorem \ref{t:L} in dimension $d=1$.]
Let $\L$ be the length of the trajectory, as defined in the introduction. From any vertex $x \in \chi$, the walk can jump to  $M$  different neighbors, each at  distance at most $r_M(x)$. Hence,
\[
\L \le \sum_{x \in \V} M\, r_M(x).
\]
Let $(x_i)_{i \in \Z}$ be the points of $\chi$, indexed so that $x_0 = 0$ and $x_i < x_{i+1}$. Define $\xi_i := x_i - x_{i-1}$. Then $(\xi_i)_{i \in \Z}$ are i.i.d. exponential random variables with parameter $1$, and
\[
r_M(x_i) \le \sum_{j=i-M+1}^{i+M} \xi_j.
\]
Moreover, if $x_i \in \V$, then necessarily $|i| \le M |\V|$. Combining these observations, we obtain
\begin{align*}
\P(\L \ge A k) 
&\le \P(|\V| \ge k) + \P\Big( \sum_{i=-Mk}^{Mk} M\, r_M(x_i) \ge A k \Big) \\
&\le \P(|\V| \ge k) + \P\Big( \sum_{j=-(M+1)k}^{(M+1)k} 2M^2\, \xi_j \ge A k \Big).
\end{align*}
Choosing $A > 4M^2(M+1)$ ensures that the second term decays exponentially fast in $k$ by standard large deviation estimates for sums of i.i.d. exponential random variables. 
\end{proof}

%%%%%%%%%%%%%%%%%%%%

\section{Number of visited points}
\label{s:V}

\subsection{Pioneer points}
\label{s:Pioneer}

Let us start with some notations. For any $n\in \N^*$, we denote by $E_n$ the random subset of $\R^d$ defined by
$$
E_n \coloneqq \bigcup_{k=0}^{n-1} B (X_k, r_{M} (X_k)) \,.
$$
Notice that the part of the space $\R^d$ that has already been explored by the random walk $(X_k)_{k\geq 0}$ until the $n$-th step is exactly $\bigcup_{k=0}^{n-1} B (X_k, r_{K_{k+1}} (X_k))$, which is included in $E_n$, uniformly in $(K_k)_{k\geq 1}$ (but the trajectory of the random walk does depend on the random labels). We define $E_0 \coloneqq \emptyset$. For any $n\in \N^*$, we denote by $\cG_n$ the $\sigma$-algebra $\cG_n \coloneqq \sigma ((K_i)_{i=1,\dots , n} ; \chi\vert_{E_n})$ corresponding to the maximal quantity of information that can be gathered by the exploration process until the $n$-th step.

We introduce the notion of pioneer points. Roughly speaking, $(\varepsilon, A)$-pioneer points are points $x\in \chi$ that are reached by the random walk $(X_n)_{n\geq 0}$ in such a way that in a small ball of radius smaller than $A$ around $x$, there exists an unexplored region of size at least $\varepsilon$ in which the random walk could possibly jump at its next step.

\begin{definition}
\label{d:pioneer}
For any $\varepsilon >0$, $A>0$ and $n\in \N$, $X_n$ is called an $(\varepsilon, A)$-pioneer point if there exist some $\rho \in [0,A]$ and some $c\in \R^d$ such that 
\begin{itemize}
\item[$(i)$] $B(c,\varepsilon) \subset B(X_n, \rho) \cap (\R^d \setminus E_n)$;
\item[$(ii)$] $| (\chi \smallsetminus \{X_n\})\cap \left(   B(X_n, \rho) \cap E_n  \right) | < M$.
\end{itemize}
The set of $(\varepsilon, A)$-pioneer points is denoted by
$$
\V_p (\varepsilon, A) \coloneqq \{ x\in \chi \,:\, \exists n \in \N \,,\, X_n =x \textrm{ and $X_n$ is $(\varepsilon, A)$-pioneer}\} \,.
$$
\end{definition}

\begin{figure}[!ht]
\begin{center}
\includegraphics[width=10cm,height=4.3cm]{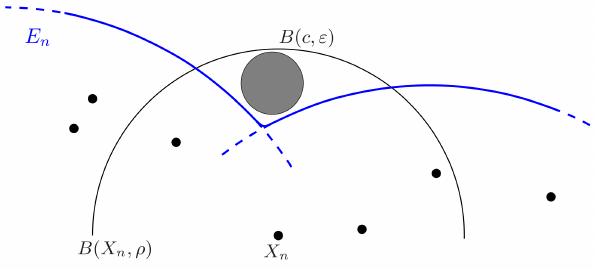}
\caption{\label{fig:pioneer1} Here is an example of a pioneer point $X_n$ illustrating Definition \ref{d:pioneer}. Indeed, the ball $B(X_n,\rho)$ exceeds the region $E_n$ (whose boundary is in blue) so that we can place a small (gray) ball $B(c,\varepsilon)$ within $B(X_n,\rho) \setminus E_n$ (Item $(i)$). Moreover, $B(X_n, \rho) \cap E_n$ only contains three Poisson points, except $X_n$, ensuring that Item $(ii)$ holds whenever the supremum of the support of the label distribution $\mu$ satisfies $M \geq 4$.}
\end{center}
\end{figure}

Let us comment the notion of pioneer point. It is implicit in the previous definition that if $X_n$ is a pioneer point then the vertex $x \in \chi$ such that $X_n = x$, is visited by the random walk for the first time at step $n$. Indeed, assume that the random walk visits some vertex $x \in \chi$ twice, say at steps $n$ and $m$ with $n < m$. In this case, the region $E_m$ includes the ball $B (X_n, r_{M} (X_n))$, also equal to $B (X_m, r_{M} (X_m))$, and then $X_m$ cannot satisfy Item $(ii)$. As a consequence, if $X_n$ and $X_m$, with $n \not= m$, are two pioneer points then they are necessarily different.

Another elementary but crucial remark about pioneer points is the following. The fact that $X_n$ is a pioneer point does not depend on the Poisson point process $\chi$ outside the region $E_n$ nor on the label $K_{n+1}$ of $X_n$. In other words, for any $\varepsilon, A >0$ and $n \in \N$, the event
\begin{equation}
\label{Pioneer-Gn-measurable}
\{ X_n \textrm{ is $(\varepsilon, A)$-pioneer}\} \mbox{ is $\cG_n$-measurable.}
\end{equation}

\paragraph{Sketch of the proof of Theorem \ref{t:V}.} Pioneer points play a crucial role in this proof. Indeed, if $|\V |$ is large, two events can occur: $(i)$ the number of pioneer points along the trajectory of the $K$-th nearest neighbor random walk $(X_n)_{n\geq 0}$ is large too, or $(ii)$ the number of pioneer points is small. The probability of $(i)$ is controlled by Lemma \ref{l:trap2} which is proved in Section \ref{s:min}. Its proof is based on the fact that each time the random walk reaches a pioneer point, the probability that its next step makes it fall into a trap from which it cannot escape is uniformly lower bounded by some positive constant. Hence the number of pioneer points reached by the random walk is stochastically dominated by a geometric distribution. The probability of $(ii)$ is controlled by Lemma \ref{l:nice2}. To prove this lemma, we first perform a renormalization argument in Section \ref{s:renorm}, introducing a notion of good blocks that are typical: thus if $|\V |$ is large, the trajectory of $(X_n)_{n\geq 0}$ has to reach a large number of good blocks. Then we show in Section \ref{s:nice} that each time the trajectory of the random walk gets close to a good block for the first time, it reaches, in fact, a pioneer point.

\begin{lemma}
\label{l:trap2}
For any $\varepsilon,A>0$, there exist positive constants $ \kappa_1, \kappa_2$ depending on parameters $\varepsilon, A, d, M,$ $\mu(\{M\})$ such that for any $k\in \N^*$, we have
\[
\P \big[ | \V_p (\varepsilon, A) | \geq k \big] \leq  \kappa_1 e^{-  \kappa_2 k}  \,.
\]
\end{lemma}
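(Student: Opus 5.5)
We prove that at each pioneer time the walk falls, with probability at least some $\eta>0$ that does not depend on the past, into a trap from which it never leaves the set of points already visited. Lemma \ref{l:trap2} then follows from a geometric domination: $\P[|\V_p(\varepsilon,A)|\ge k]\le (1-\eta)^{k-1}$.

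\textbf{Step 1: the one-step trapping bound (Lemma \ref{l:trap}).} Fix a pioneer time $n$ with its $\rho\le A$ and its center $c$. We exhibit an event $T_n$ with $\P[T_n\mid\cG_n]\ge\eta$ a.s. on $\{X_n \text{ is } (\varepsilon,A)\text{-pioneer}\}$, such that on $T_n$ the walk visits no new point after time $n+2$. Let $\delta=\delta(\varepsilon)$ be small. We ask for:
\begin{itemize}
\item[(a)] $\chi\cap(B(X_n,\rho)\setminus E_n)$ consists of exactly $M+1$ points, all lying in $B(c,\delta)$, where $B(c,\delta)\subset B(c,\varepsilon)$;
\item[(b)] $K_{n+1}=M$.
\end{itemize}
By condition $(ii)$, $B(X_n,\rho)\cap E_n$ contains at most $M-1$ points other than $X_n$. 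Hence under (a) the $M$-th nearest neighbor of $X_n$ lies in the cluster in $B(c,\delta)$. Moreover, the choice of the $\sigma$-algebra ensures that the $M$ nearest neighbors of $X_n$ are determined by data in $B(X_n,\rho)$ once (a) holds. So $X_{n+1}$ is one of the $M+1$ cluster points. Each cluster point has its $M$ nearest neighbors inside the cluster, provided the cluster is isolated, i.e.\ there is no other point within distance $3\delta$ of it. The region $B(c,\varepsilon)\setminus B(c,\delta)$ is unexplored and must be made empty. The whole cluster, including its isolation annulus, must lie in $B(c,\varepsilon)\subset\R^d\setminus E_n$; explored points outside $B(c,\varepsilon)$ are at distance at least $\varepsilon-\delta\gg 2\delta$, so they do no harm. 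Once in the cluster, the walk stays there forever.

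\textbf{Step 2: uniformity of $\eta$.} Conditionally on $\cG_n$, the process $\chi$ restricted to $\R^d\setminus E_n$ is a PPP of intensity $1$. This holds because $E_n$ is a stopping-set-type exploration: it is determined by the walk, and the walk only inspects balls $B(X_k,r_M(X_k))$. The event (a) concerns the unexplored part of a region of volume at most $\cL^d(B(0,A))$. Its probability is therefore at least
\[
e^{-\cL^d(B(0,A))}\,\frac{\cL^d(B(0,\delta))^{M+1}}{(M+1)!},
\]
which is also bounded below by $\mu(\{M\})$ times a constant depending on $\varepsilon,A,d,M$. Here we should intersect with $B(X_n,\rho)$, but $B(c,\varepsilon)$ lies inside it, and the remaining unexplored part of $B(X_n,\rho)$ only needs to be empty. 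The label $K_{n+1}$ is independent of $\cG_n$, which gives the factor $\mu(\{M\})$.

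\textbf{Step 3: iteration.} Let $\tau_1<\tau_2<\dots$ be the successive pioneer times. These are stopping times for the filtration $(\cG_n)$ by \eqref{Pioneer-Gn-measurable}, and distinct pioneer times correspond to distinct points. On $T_{\tau_j}$, no new point is ever visited after time $\tau_j+1$, and a pioneer point must be visited for the first time. Hence there is no $\tau_{j+1}$. The event $T_{\tau_j}$ is $\cG_{\tau_j+2}$-measurable, up to the isolation requirement, which is determined by the relevant ball. Applying the strong Markov-type conditioning gives
\[
\P[\tau_{j+1}<\infty]\le(1-\eta)\,\P[\tau_j<\infty],
\]
and therefore $\P[|\V_p|\ge k]\le(1-\eta)^{k-1}$.

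\textbf{Main obstacle.} The main obstacle is the rigorous conditional Poisson statement in Step 2, namely that the unexplored region is fresh at a random time. I would handle it by summing over $n$ and using the fact that the event $\{\tau_j=n\}$ is $\cG_n$-measurable, while $\cG_n$ depends on $\chi$ only through $\chi|_{E_n}$, where $E_n$ is a stopping set. The spatial Markov property of the PPP for stopping sets then applies.
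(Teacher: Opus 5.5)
Your architecture is the paper's: from any pioneer point, a trap of size $M+1$ is reached in one step with $\cG_n$-conditional probability bounded below, and then you iterate. But the key geometric claim of Step 1 is false as stated. From condition $(ii)$ and (a) you conclude that $v_M(X_n)$ lies in the cluster in $B(c,\delta)$. This ignores that the $\ell\le M-1$ explored points $y_1,\dots,y_\ell\in B(X_n,\rho)\cap E_n$ can be interleaved with the cluster points in the distance ordering from $X_n$. Nothing prevents a sphere $S(X_n,\|X_n-y_i\|)$ from cutting $B(c,\delta)$; it may even pass through $c$, so taking $\delta$ small around the given center $c$ does not help. For instance, take $\ell=1$ and $M\ge 2$, and suppose $M-1$ cluster points are closer to $X_n$ than $y_1$ while the other two are farther. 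Then $v_M(X_n)=y_1$, an already explored point about which nothing is known. Whenever such a sphere cuts $B(c,\delta)$, these configurations have positive conditional probability inside your event (a). So $T_n$ does not imply that the walk is trapped, and the bound $\P[T_n\mid\cG_n]\ge\eta$ does not give what you need.

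This is exactly the point the paper's proof of Lemma \ref{l:trap} is built around. The cluster must be placed, $\cG_n$-measurably, in a region not met by the union $\cS$ of the at most $M-1$ spheres $S(X_n,\|X_n-y_i\|)$. The paper lines up $M$ balls $B(c_i,\varepsilon/M)$ inside $B(c,\varepsilon)$ along the ray from $X_n$ through $c$ and picks one, $B(c_{i_0},\varepsilon/M)$, that avoids $\cS$ by pigeonhole. It then requires the $M+1$ points to lie in $B(c_{i_0},\varepsilon/(3M))$ and the rest of $B(X_n,\rho)\setminus E_n$ to be empty. The cluster points then have consecutive ranks in the distance ordering from $X_n$, with at most $M-1$ points before them, so rank $M$ falls in the cluster. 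Within your setup, an alternative repair is to note that the spheres cut $B(c,\delta)$ into at most $M$ shells and require all $M+1$ points in the largest one, at a cost of a factor at least $M^{-(M+1)}$.

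Separately, Step 3 (and the matching claim at the end of Step 1) is wrong as written. Once in the trap, the walk can still discover up to $M$ new cluster points after time $\tau_j+1$, and $X_{\tau_j+1}$ itself may be a pioneer point, so ``there is no $\tau_{j+1}$'' fails. What is true is that at most $M+1$ further pioneer points can occur. This gives the paper's recursion $\P[|\V_p(\varepsilon,A)|\ge k+M+2]\le(1-\eta)\,\P[|\V_p(\varepsilon,A)|\ge k]$, which still yields an exponential bound.
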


\begin{lemma}
\label{l:nice2}
There exist positive constants $\varepsilon_0$, $A_0$, $\kappa_3$,  $ \kappa_4$, $ \kappa_5$  depending on parameters $d, M$  such that for any $k\in \N^*$, we have
$$ \P [ | \V | \geq k \,,\, | \V_p (\varepsilon_0, A_0) | <   \kappa_3 k ] \leq  \kappa_4 e^{-  \kappa_5 k}  \,.$$
\end{lemma}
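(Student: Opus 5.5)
We have to prove Lemma \ref{l:nice2}. The plan is to show that whenever $|\V|$ is large, the walk must approach many good regions for the first time, and each such approach produces a pioneer point. It proceeds in three steps: a renormalization, a deterministic pioneer-point mechanism, and a Peierls-type counting argument.

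\textbf{Step 1: renormalization.} We tile $\R^d$ into boxes $\Lambda_z = Lz + [0,L)^d$, $z\in\Z^d$, with $L$ large. A box is called \emph{good} if the Poisson configuration in its $3^d$-neighborhood $\widetilde\Lambda_z$ (the union of the adjacent boxes) is regular. Concretely, we partition $\widetilde\Lambda_z$ into small subcubes of side $\delta L$ and require that every subcube contains between $1$ and $C(\delta L)^d$ Poisson points. On this event two deterministic facts hold for every $x\in\chi\cap\Lambda_z$:
\begin{itemize}
\item[(a)] $r_M(x)\le c_1\delta L$, and for $M\ge 2$ the radius $r_M(x)$ is also bounded below in the relevant sense;
\item[(b)] any ball of radius $\rho\le A_0$ centred in $\Lambda_z$ contains at most a bounded number of points.
\end{itemize}
The goodness events are $1$-dependent and $\P[\Lambda_z \text{ bad}]\to 0$ as $L\to\infty$. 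By Liggett--Schonmann--Stacey, the bad boxes are dominated by a low-density Bernoulli field. A standard Peierls/lattice-animal argument then gives the following: with probability at least $1-e^{-ck}$, every connected set of $m\ge k/C'$ boxes contains at least $m/2$ good boxes.

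\textbf{Step 2: pioneer points near good boxes.} The deterministic core is this. Suppose the walk, at some step $n$, is at $X_n$ inside (or adjacent to) a good box $\Lambda_z$ that it enters for the first time, in the sense that $E_n\cap\Lambda_z$ is small. We want some $X_m$ with $m\ge n$, within a bounded number of steps, to be $(\varepsilon_0,A_0)$-pioneer.

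The idea is to look at the first time the walk visits a point $x$ of the good box whose $M$-th neighbourhood ball $B(x,r_M(x))$ is not fully covered by $E_m$. Since $E_m$ is a finite union of balls with radii of order $\delta L$ (by good-box regularity) and the walk's exploration of the box is just starting, there is a point near the frontier of $E_m$. We take $\rho = r_M(x)$, or slightly smaller, with $A_0$ of order the typical spacing. Condition $(ii)$ will hold because $B(x,\rho)$ contains at most $M$ points, and those outside $E_m$ do not count. Condition $(i)$ uses a convexity/geometric argument: a ball $B(x,\rho)$ not covered by a union of balls whose radii are comparable must leave an uncovered ball of radius $\varepsilon_0$, provided the radii are bounded below by the good-box regularity.

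The main obstacle is exactly this: making $(i)$ and $(ii)$ hold simultaneously with \emph{uniform} constants $\varepsilon_0$ and $A_0$. To handle it, I would first choose, among the visited points of the box, the one minimizing the distance to $\R^d\setminus E_m$, and use that $d\ge 2$ to place the small ball. It may be necessary to replace $L$-boxes by mesoscopic boxes of size $A_0$ in which $\chi$ is quasi-lattice-like.

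\textbf{Step 3: counting.} On $\{|\V|\ge k\}$ intersected with the box regularity event (which fails with exponentially small probability, by Poisson large deviations), the walk visits a connected set (for the $*$-adjacency) of at least $c k/L^d$ boxes. By Step 1 at least half of these are good, and by Step 2 each good box first approached yields a distinct pioneer point. Distinctness follows from the remark after Definition \ref{d:pioneer}, with at most a bounded multiplicity of boxes attributed to a single point. Hence $|\V_p(\varepsilon_0,A_0)|\ge \kappa_3 k$ outside an event of probability $\kappa_4e^{-\kappa_5k}$.
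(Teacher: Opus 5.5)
\textbf{The gap is Step 2.} You flag it yourself as the main obstacle, and it is not resolved. Item $(i)$ of Definition \ref{d:pioneer} requires an \emph{unexplored} ball of a \emph{uniform} radius $\varepsilon_0$ inside $B(X_n,\rho)$. Item $(ii)$ forces $\rho<r_M(X_n)$, because $B(X_n,r_M(X_n))$ already contains $M$ points besides $X_n$, and these may all lie in $E_n$. Your regularity conditions cannot deliver such a ball:
\begin{itemize}
\item Counts between $1$ and $C(\delta L)^d$ per subcube do not bound $r_M(x)$ from below. Nothing prevents $M$ points from lying arbitrarily close to $x$.
\item The ``convexity'' claim is false. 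A ball not covered by a union of balls, even of comparable radii, may have an arbitrarily thin uncovered part: take $B(x,\rho)\setminus B(y,\rho)$ with $y$ close to $x$.
\item In the same way, the first visited point whose ball is not covered by $E_m$, or the first ball $B(X_n,r_M(X_n))$ reaching into a fresh box, may only produce a thin sliver or a nearly tangent cap.
\end{itemize}
Counts in fixed subcubes do not rule out such configurations. You need a condition on the geometry of the balls $B(x,r_M(x))$ themselves.

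\textbf{The missing ideas.} The paper's argument supplies two of them.
\begin{itemize}
\item Look at the step $n$ at which the explored region first touches a good block, i.e.\ $E_n\cap\Lambda_R(u)=\emptyset$ and $E_{n+1}\cap\Lambda_R(u)\neq\emptyset$ (Definition \ref{d:nice}). Then the whole block is unexplored. The question no longer involves the walk's history, only how the single ball $B(X_n,r_M(X_n))$ meets $\Lambda_R(u)$, which is a property of $\chi$ alone.
\item Put exactly this into the definition of goodness: every ball $B(x,r_M(x))$, $x\in\chi$, that meets $\Lambda_R(u)$ meets it in a set containing a ball of radius $2\varepsilon$. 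This condition is local: by Lemma \ref{p:local}, on $\cD_R(u)$ only the finitely many $x\in Q_R(u)$ matter, and they satisfy $r_M(x)\le\sqrt{d} R$. So finite-range dependence and Liggett--Schonmann--Stacey apply. Its probability tends to $1$ as $\varepsilon\to0$ for fixed $R$ by a soft argument: there are finitely many relevant balls, each a.s.\ non-tangent to the block.
\end{itemize}
Taking $\rho=r_M(X_n)-\varepsilon$ then gives both items with $A_0=\sqrt{d} R$ (Lemma \ref{l:nice1}).

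\textbf{Routine repairs in Steps 1 and 3.}
\begin{itemize}
\item The Peierls bound must be restricted to animals containing the origin.
\item The boxes containing visited points need not be $*$-connected, since jumps may exceed $L$ in bad regions. Count instead the boxes meeting $\bigcup_n E_n$, which is connected because consecutive balls share the point $X_{k+1}$.
\item Deducing many boxes from $|\V|\ge k$ requires a union bound over animals of size less than $k$, together with Poisson large deviations, because your regularity only holds in good boxes.
\end{itemize}
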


\begin{proof}[Proof of Theorem \ref{t:V}]
Let $\varepsilon_0= \varepsilon_0(d,M)>0, A_0= A_0 (d,M)>0,  \kappa_3 =  \kappa_3 (d,M) >0,  $ as given in Lemma \ref{l:nice2}. For any $k\in \N^*$, we have
\begin{equation}
\label{e:cclV1}
\P  [ | \V | \geq k ] \leq  \P [ | \V_p (\varepsilon_0, A_0) | \geq  \kappa_3 k ] + \P [ | \V | \geq k \,,\, | \V_p (\varepsilon_0, A_0) | <   \kappa_3 k ]\,.
\end{equation}
Let $ \kappa_1 =  \kappa_1 (d, M, \mu(\{M)\})$ and $ \kappa_2 =  \kappa_2 (d, M, \mu(\{M\}))$ as given by Lemma \ref{l:trap2} for these fixed $\varepsilon_0$ and $A_0$. By Lemma \ref{l:trap2}, for any $k\in \N^*$, we have
\begin{equation}
\label{e:cclV2}
 \P [ | \V_p (\varepsilon_0, A_0) | \geq  \kappa_3 k ] \leq  \kappa_1 e^{-  \kappa_2  \kappa_3 k}\,.
 \end{equation}
By Lemma \ref{l:nice2}, for any $k\in \N^*$, we have
\begin{equation}
\label{e:cclV3}
\P [ | \V | \geq k \,,\, | \V_p (\varepsilon_0, A_0) | <   \kappa_3 k ] \leq  \kappa_4 e^{-  \kappa_5 k}\,.
 \end{equation}
Combining Equations \eqref{e:cclV1}, \eqref{e:cclV2} and \eqref{e:cclV3}, Theorem \ref{t:V} is proved.
\end{proof}

\subsection{Pioneer points may lead to traps}
\label{s:min}

The goal of this section is to prove Lemma \ref{l:trap2} which asserts that it is very unlikely for the $K$-th nearest neighbor random walk to visit many (necessarily different) pioneer points.

Let us recall that the random walk is said to {\it get trapped} if $|\V | < \infty$. This is equivalent to the existence of a non-empty finite subset $\V' \subset \V$ which is closed for the $K$-th nearest neighbor random walk, {\it i.e.}, from which it cannot escape. Let us now define the notion of \textit{trap} as a particular example of closed subset for the $K$-th nearest neighbor random walk.

\begin{definition}
\label{d:trap}
A finite subset $\cB \subset \chi$ is called a trap if
\[
\forall x \in \cB , \, \big\{ v_k(x) , k\in \{1,\dots,M \} \big\} \subset \cB \,.
\]
The cardinality of the trap $\cB$ is called its size and denoted by $|\cB|$.
\end{definition}

Obviously, any trap has a size at least $M+1$. The next result states that the probability for the $K$-th nearest neighbor random walk of falling in a trap in one step, from any pioneer point, is uniformly bounded away from $0$.

\begin{lemma}
\label{l:trap}
For any $A,\varepsilon >0$, there exists $q = q(\varepsilon, A, d, M) >0$ such that, for any integer $n\in \N$, a.s. on the event $\{ X_n \textrm{ is $(\varepsilon, A)$-pioneer}\}$, the following holds:
\[
\P \big[ v_M (X_n) \textrm{ belongs to a trap of size }M+1 \,\big|\, \cG_n \big] \geq q \,.
\]
\end{lemma}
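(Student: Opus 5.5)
The plan is to exhibit, conditionally on $\cG_n$, an explicit configuration of $\chi$ in the unexplored region $\R^d\setminus E_n$ that forces $v_M(X_n)$ into a trap of size $M+1$, and to bound its conditional probability from below by a constant depending only on $(\varepsilon,A,d,M)$. Note that the event in question does not involve $K_{n+1}$: $v_M(X_n)$ is a function of $\chi$ alone. The first step is the spatial Markov property. The set $E_n$ is a stopping set for the exploration, because whether $E_n=F$ is determined by $\chi|_F$ and the labels. Hence, conditionally on $\cG_n$, the process $\chi|_{\R^d\setminus E_n}$ is a homogeneous PPP of intensity $1$ on $\R^d\setminus E_n$. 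I would prove this in the standard way, by discretizing the possible paths $(X_0,\dots,X_{n-1})$ and using independence of Poisson restrictions to disjoint sets. I expect this to be the main technical obstacle, mostly a matter of careful measurability. On the pioneer event, I would also fix $\cG_n$-measurable choices of $\rho\in[0,A]$ and $c$ as in Definition \ref{d:pioneer}.

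Next I place the cluster. Let $j<M$ be the number of points of $(\chi\setminus\{X_n\})\cap B(X_n,\rho)\cap E_n$; these points are $\cG_n$-known. Fix $\delta=\delta(\varepsilon,M)$ small, say $\delta=\varepsilon/(100M)$. I claim there is a $\cG_n$-measurable centre $c'$ with $B(c',3\delta)\subset B(c,\varepsilon)$ such that no known point $y$ has $\|y-X_n\|\in[\|c'-X_n\|-\delta,\|c'-X_n\|+\delta]$. To see this, move $c'$ along the segment through $c$ in the direction of $X_n-c$ (or any direction if $c=X_n$); the distance $\|c'-X_n\|$ then ranges over an interval of length about $\varepsilon$. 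At most $j<M$ forbidden bands of width $2\delta$ have to be avoided, so by pigeonhole a suitable position exists, and it can be chosen measurably, for instance as the first admissible position on a fixed grid.

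Define the event $\cE$: $\chi\cap\big(B(X_n,\rho)\setminus E_n\big)$ consists of exactly $M+1$ points, all lying in $B(c',\delta)$. By the first step,
\[
\P[\cE\mid\cG_n]\ \ge\ e^{-\cL^d(B(0,A))}\,\frac{\cL^d(B(0,\delta))^{M+1}}{(M+1)!}=:q>0 .
\]
On $\cE$, the closed ball $B(X_n,\rho)$ contains at least $M+1$ points of $\chi\setminus\{X_n\}$, so $v_M(X_n)\in B(X_n,\rho)$. Among these points, every point other than the cluster points is known, and by the choice of $c'$ each such point is strictly closer or strictly farther than all cluster points. Let $j'\le j\le M-1$ be the number of points closer than the cluster. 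The cluster then occupies ranks $j'+1,\dots,j'+M+1$, and this range contains $M$. Hence $v_M(X_n)$ is a cluster point.

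Finally I check that the cluster $\cB$ is a trap. Any point within distance $2\delta$ of a cluster point lies in $B(c',3\delta)\subset B(c,\varepsilon)$, which is disjoint from $E_n$ and contained in $B(X_n,\rho)$. On $\cE$, this ball therefore contains only the $M+1$ cluster points. Take $x\in\cB$. The other $M$ cluster points are within distance $2\delta$ of $x$, while every other point of $\chi$ is at distance more than $2\delta$. So $\{v_1(x),\dots,v_M(x)\}=\cB\setminus\{x\}$, and $\cB$ is a trap of size $M+1$. The constant $q$ depends only on $\varepsilon,A,d,M$, as required.
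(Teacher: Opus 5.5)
Your proposal is correct and follows essentially the same route as the paper. Both arguments use pigeonhole inside $B(c,\varepsilon)$ to find a small ball whose radial range seen from $X_n$ avoids the spheres through the at most $M-1$ explored points, plant exactly $M+1$ points there with the rest of $B(X_n,\rho)\setminus E_n$ empty, and bound the conditional probability via Poisson independence on the unexplored region. Your explicit rank-counting and the spatial Markov property at the stopping set $E_n$ just spell out what the paper asserts more briefly.
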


\begin{proof}
For the whole proof, we work conditionally to the $\sigma$-algebra $\cG_n$ with assuming that the event $\{ X_n \textrm{ is $(\varepsilon, A)$-pioneer}\}$ holds. Recall that this event is $\cG_n$-measurable (\ref{Pioneer-Gn-measurable}). Hence, there exist $\rho \in [0,A]$ and $c\in \R^d$ satisfying Items $(i)$-$(ii)$ of Definition \ref{d:pioneer}. By Item $(ii)$, there are $\ell \leq M-1$ Poisson points $y_1,\dots, y_{\ell}$ in $(\chi \smallsetminus \{X_n\}) \cap ( B(X_n, \rho) \cap E_n)$. Let us denote by $\cS$ the union of spheres centered at $X_n$ and passing through these $\ell$ points:
\[
\cS = \bigcup_{i=1}^{\ell} S(X_n,\|X_n-y_i\|)
\]
where we recall that $S(x,r)$ stands for the Euclidean sphere centered at $x$ with radius $r$. It is possible that the set $\cS$ intersects the ball $B(c,\varepsilon)$. Our goal is now to exhibit a small ball inside $B(c,\varepsilon)$ that avoids $\cS$ and within which we will be able to create a trap. To do it, let us consider $M$ (small) balls $B(c_i,\varepsilon/M)$ within $B(c,\varepsilon)$ whose centers $c_1,\ldots,c_M$ are aligned with $X_n$ and $c$, see Figure \ref{fig:pioneer2}. Since $\ell \leq M-1$, the Pigeonhole principle ensures that at least one of those balls does not overlap $\cS$, say $B(c_{i_0},\varepsilon/M)$.

\begin{figure}[!ht]
\begin{center}
\includegraphics[width=9cm,height=4.7cm]{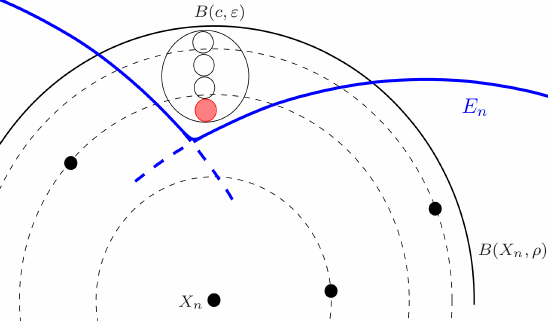}
\caption{\label{fig:pioneer2} Let us return to the example depicted in Figure \ref{fig:pioneer1} with assuming this time that $M=4$. Four small balls of radius $\varepsilon/M$ are located inside $B(c,\varepsilon)$ and some of them are crossed by the set $\cS$, which here is the union of three spheres (dotted lines). At least one of the $B(c_i,\varepsilon/M)$'s avoids $\cS$, for instance the red one. This is the right place to create a trap of size $M+1$ which will contain $v_M(X_n)$.}
\end{center}
\end{figure}

Thus, let us introduce the event
\[
\cH = \big\{ |\chi \cap (B(X_n, \rho) \setminus E_n)| = |\chi \cap B(c_{i_0},\varepsilon/(3M))| = M+1 \big\}
\]
for which there are exactly $M+1$ Poisson points in $B(X_n, \rho) \setminus E_n$ which are all located within the ball $B(c_{i_0},\varepsilon/(3M))$. Let us check that, on the event $\cH$, the $M$-th nearest neighbor $v_M (X_n)$ belongs to a trap of size $M+1$. On the one hand, since $B(c_{i_0},\varepsilon/M)$ does not overlap $\cS$ and $\ell \leq M-1$, $v_M(X_n)$ is definitely one of the $M+1$ Poisson points of $B(c_{i_0},\varepsilon/(3M))$. On the other hand, the annulus $B(c_{i_0},\varepsilon/M) \setminus B(c_{i_0},\varepsilon/(3M))$ being empty of Poisson points, the set $\chi \cap B(c_{i_0},\varepsilon/(3M))$ forms a trap of size $M+1$. We can now conclude. On the event $\{ X_n \textrm{ is $(\varepsilon, A)$-pioneer}\}$,
\begin{eqnarray*}
\lefteqn{\P [ v_M (X_n) \textrm{ belongs to a trap of size }M+1 \,|\, \cG_n ]}\\
& \hspace*{4cm} & \geq \; \P [ \cH \,|\, \cG_n ] \\
& & = \; \P [ |\chi \cap B(c_{i_0},\varepsilon/(3M))| = M+1 \,|\, \cG_n ] \, \P [ \chi \cap \cR = \emptyset \,|\, \cG_n ] ~,
\end{eqnarray*}
where $\cR = B(X_n,\rho) \setminus (E_n \cup B(c_{i_0},\varepsilon/(3M)) )$, using independence properties of the Poisson point process $\chi$. Since $\rho \leq A$, we can write:
\begin{equation}
\label{expression-q}
\P [ v_M (X_n) \textrm{ belongs to a trap of size }M+1 \,|\, \cG_n ] \, \geq \, \frac{\big( v_d (\varepsilon /(3M))^{d} \big)^{M+1}}{(M+1)!} e^{-v_d (\varepsilon /(3M))^{d}} \times e^{-v_d A^d}  
\end{equation}
where $v_d$ denotes the volume of the unit $d$-dimensional ball. Finally, taking $q = q(\varepsilon, A, d, M) >0$ equal to the r.h.s. of (\ref{expression-q}) achieves the proof.
\end{proof}

We are now ready to prove Lemma \ref{l:trap2}.

\begin{proof}[Proof of Lemma \ref{l:trap2}]
Let $\varepsilon, A >0$. Recall that $\V_p (\varepsilon, A)$ denotes the set of $(\varepsilon,A)$-pioneer points visited by the $K$-th nearest neighbor random walk. For any Poisson point $X$, let us define $\cA(X) = \{v_M(X) \textrm{ belongs to a trap of size }M+1\}$. On the event $\cA(X_n) \cap \{K_{n+1}=M\}$, the random walk will be locked just after step $n$ in a trap of size $M+1$. So it will not be able to visit more than $M+1$ new vertices beyond step $n$, meaning that it will not be able to admit more than $M+1$ pioneer points beyond step $n$ (since pioneer points are necessarily different vertices). Henceforth, for any integer $k$, the probability $\P[|\V_p(\varepsilon, A)| \geq k+(M+2)]$ is upperbounded by
\begin{eqnarray*}
\lefteqn{\P \Big[ \bigcup_{n_1 < \ldots < n_k} \Big(  \big\{ X_{n_1},\ldots,X_{n_k} \mbox{ are the first $k$ pioneer points}\big\} \bigcap \big( \cA(X_{n_k})^c \cup \{K_{n_{k}+1} \not= M\} \big)\Big)  \Big]} \\
& & \leq \sum_{n_1 < \ldots < n_k} \P \Big[ \big\{ X_{n_1},\ldots,X_{n_k} \mbox{ are the first $k$ pioneer points} \big\} \bigcap \big( \cA(X_{n_k})^c \cup \{K_{n_{k}+1} \not= M\} \big) \Big] \\
& & = \sum_{n_1 < \ldots < n_k} \E \Big[ \1_{ \{ X_{n_1},\ldots,X_{n_k} \mbox{ are the first $k$ pioneer points} \} } \, \P \big[ \cA(X_{n_k})^c \cup \{K_{n_{k}+1} \not= M\} \,\big|\, \cG_{n_k} \big] \Big] \\
\end{eqnarray*}
since the event $\{ X_{n_1},\ldots,X_{n_k} \mbox{ are the first $k$ pioneer points}\}$ is $\cG_{n_k}$-measurable. The random label $K_{n_{k}+1}$ being independent from the event $\cA(X_{n_k})$ and the $\sigma$-algebra $\cG_{n_k}$, we then can write:
\begin{eqnarray*}
\P \big[ \cA(X_{n_k})^c \cup \{K_{n_{k}+1} \not= M\} \,\big|\, \cG_{n_k} \big] & = & 1 - \P \big[ \cA(X_{n_k}) \,\big|\, \cG_{n_k} \big] \, \P \big[ K_{n_{k}+1} = M \big] \\
& \leq & 1 - q \, \mu(\{M\})
\end{eqnarray*}
where $0<q<1$ is the lower bound provided by Lemma \ref{l:trap}. Let us set $c = 1 - q \mu(\{M\}) \in (0,1)$ which depends on parameters $\varepsilon, A, d, M$ and $\mu(\{M\})$. Combining previous inequalities, we get
\[
\P \big[ | \V_p(\varepsilon, A) | \geq k+(M+2) \big] \leq c \, \P \big[ | \V_p(\varepsilon, A) | \geq k \big]
\]
from which the searched result follows.
\end{proof}

\subsection{Rescaling}
\label{s:renorm}

All that remains is to prove Lemma \ref{l:nice2}, asserting that among all the Poisson points visited by the random walk $(X_n)_{n\in \N}$, a positive fraction of them are pioneer points with high probability. This is the objective of Sections \ref{s:renorm} and \ref{s:nice}.

For that purpose, we use a renormalization argument. Let us partition the ambient space $\R^d$ with blocks. For any given $R \in \N^*$ and $u \in \Z^d$, let us define the block $\Lambda_R(u)$ as the hypercube of sidelength $R$ centered at $Ru$:
$$
\Lambda_R (u) \coloneqq Ru + \left[ -\frac{R}{2} ; \frac{R}{2}  \right[^d \,.
$$
The blocks $(\Lambda_R(u))_{u\in \Z^d}$ are pairwise disjoint and cover the whole space $\R^d$. We now introduce a notion of {\it good block} with the following goal in mind: whenever the random walk $(X_n)_{n\geq 0}$ gets close enough to a good block for the first time, it should go through a pioneer point. See Lemma \ref{l:nice1} of Section \ref{s:nice}.

\begin{definition}
\label{d:good}
For any given $\varepsilon >0$, $R\in \N^*$ and $u \in \Z^d$, the block $\Lambda_R(u)$ is said $\varepsilon$-good if both following statements are satisfied.
\begin{itemize}
\item[$(i)$] For any $v \in \Z^d$ s.t. $\| u-v \|_\infty \leq d$, we have $| \chi \cap \Lambda_R (v) | \geq M+1$.
%\item[$(ii)$] $\forall x \in \chi$, if $B(x, r_M(x) ) \cap \Lambda_R (u) \neq \emptyset$, then $r_M(x) \leq A$;
\item[$(ii)$] For any $x \in \chi$, $B(x, r_M(x) ) \cap \Lambda_R (u) \neq \emptyset$ implies that $B(x, r_M(x) ) \cap \Lambda_R (u)$ contains an Euclidean ball of radius $2 \varepsilon$.
\end{itemize}
\end{definition}

Let us define the family of Bernoulli random variables $( \cY_u^{(\varepsilon, R)} )_{u\in \Z^d}$ by
\begin{equation}
\label{e:defY}
\forall u\in \Z^d , \; \cY_u^{(\varepsilon, R)} = \cY_u \coloneqq \1_{\Lambda_R(u) \textrm{ is $\varepsilon$-good}} \,.
\end{equation}
When it does not matter, the upperscript $(\varepsilon, R)$ will be removed from the notation $\cY_u^{(\varepsilon, R)}$. The Poisson point process $\chi$ being translation invariant in distribution, those random variables are identically distributed, but dependent. In the rest of this section, we will prove that $(a)$ each $\cY_u^{(\varepsilon, R)}$ equals to $1$-- i.e. each block $\Lambda_R(u)$ is $\varepsilon$-good --with high probability for well chosen parameters $\varepsilon$ and $R$ (Lemma \ref{l:Y}) and that $(b)$ the $\cY_u$'s are $\kappa_6$-dependent (Lemma \ref{l:depfini}). Both results will allow us to state that the (dependent) random field $(\cY_u)_{u\in \Z^d}$ stochastically dominates a family of i.i.d. variables with Bernoulli distribution (Lemma \ref{l:LSS}).

In order to study how the $\cY_u$'s depend on each other, we require the intermediate result Lemma \ref{p:local} below. Let us first introduce a few notations. For any $u \in \Z^d$, let us define the event $\cD_R(u)$ as
\begin{equation}
\label{e:cDu}
\cD_R (u) \coloneqq \bigcap_{v \in \Z^d \,:\, \| u-v \|_\infty \leq d} \big\{ | \chi \cap \Lambda_R (v) | \geq M+1 \big\} \,.
\end{equation}
Notice that $\cD_R(u)$ depends only on $\chi \cap Q_R(u)$ where 
\begin{equation}
\label{e:Du}
Q_R (u) \coloneqq  \bigcup_{v \in \Z^d \,:\, \| u-v \|_\infty \leq d} \Lambda_R (v) \, = \, Ru + \left[  -R \left( d+ \frac{1}{2} \right) ; R \left( d+ \frac{1}{2} \right)\right[^d \,.
\end{equation}

\begin{lemma}
\label{p:local}
For any $u \in \Z^d$, $R\in \N^*$ and $y \in \chi$, the following inclusion a.s. holds:
\[
\cD_R(u) \cap \big\{ B(y, r_M(y) ) \cap \Lambda_R (u) \neq \emptyset \big\} \, \subset \, \big\{ y \in Q_R(u) \big\} \cap \big\{ r_M (y) \leq \sqrt{d} R \big\} \,.
\]
\end{lemma}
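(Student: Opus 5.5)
We must show: on $\cD_R(u)$, if $B(y,r_M(y))$ meets $\Lambda_R(u)$, then $y\in Q_R(u)$ and $r_M(y)\le \sqrt d R$. The plan is to find, for $y$, a block close to it that contains at least $M+1$ Poisson points, and use this to bound $r_M(y)$ by that block's diameter or by the distance to it.

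\textbf{Step 1: $y\in Q_R(u)$.} Suppose, for contradiction, that $y\notin Q_R(u)$. Then some coordinate satisfies $|y_i - Ru_i|\ge R(d+\tfrac12)$, say $y_i \ge Ru_i + R(d+\tfrac12)$. Take a point $z\in B(y,r_M(y))\cap\Lambda_R(u)$. Then $z_i< Ru_i+R/2$, so $r_M(y)\ge \|y-z\|> dR$.

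Now use $\cD_R(u)$. The walk along the segment from $\Lambda_R(u)$ toward $y$ passes through the blocks $\Lambda_R(v)$ with $\|v-u\|_\infty\le d$, and each contains at least $M+1$ points. Let $v$ be the index of the block of $Q_R(u)$ that is closest to $y$: clamp each coordinate of $y/R$ (rounded) into $[u_j-d,u_j+d]$. Every point $w\in\Lambda_R(v)$ satisfies $\|y-w\| < \|y-z\|$. Intuitively, $\Lambda_R(v)$ lies between $y$ and $\Lambda_R(u)$ coordinatewise: in each coordinate $j$, $|y_j-w_j|\le |y_j-z_j|$ up to block-sized slack. In coordinate $i$ the gain is at least $dR-R=(d-1)R$ in absolute value. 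So $\Lambda_R(v)$ contains at least $M+1$ points of $\chi$, not counting $y$ since $y\notin Q_R(u)$, all strictly closer to $y$ than $z$. Hence $r_M(y)<\|y-z\|$.

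This is not yet a contradiction, because $z$ is only some point of the ball. Take instead $z$ to be the point of $\Lambda_R(u)$ closest to $y$; then $\|y-z\|\le r_M(y)$. The contradiction needs a strict inequality $\|y-w\|<\|y-z\|$ for all $w\in\Lambda_R(v)$, and this is where the factor $d$ in $Q_R(u)$ is needed.

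\textbf{Main obstacle.} The key computation is the following. Write $a_j=\operatorname{dist}(y_j,[Ru_j-R/2,Ru_j+R/2])$, so that $\|y-z\|^2=\sum_j a_j^2$. Choosing $v_j$ to be the block index in $[u_j-d,u_j+d]$ closest to $y_j$ gives:
\begin{itemize}
\item in coordinate $i$, $|y_i-w_i|\le a_i-dR+R$, since $a_i\ge dR$;
\item in every other coordinate $j$, $|y_j-w_j|\le \max(a_j,R)$, with $|y_j-w_j|\le a_j$ once $a_j\ge R$ and $|y_j-w_j|\le R$ otherwise.
\end{itemize}
Then
\[
\|y-w\|^2\le (a_i-(d-1)R)^2+\sum_{j\ne i}\max(a_j,R)^2 .
\]
It remains to check that this is strictly less than $\sum_j a_j^2$. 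The loss in coordinate $i$ satisfies $a_i^2-(a_i-(d-1)R)^2\ge (d-1)R(2a_i-(d-1)R)\ge (d-1)(d+1)R^2$. This must beat the possible gain of at most $(d-1)R^2$ from the other coordinates, which holds for $d\ge2$. For $d=1$ there are no other coordinates. I expect some care with the half-open boundaries, but the strict margin absorbs it. This yields at least $M+1$ Poisson points in $B(y,\|y-z\|)$ other than $y$, so $r_M(y)<\|y-z\|$, contradicting $B(y,r_M(y))\cap\Lambda_R(u)\ne\emptyset$.

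\textbf{Step 2: $r_M(y)\le\sqrt d R$.} Now $y\in Q_R(u)$, so $y\in\Lambda_R(v)$ for some $v$ with $\|v-u\|_\infty\le d$. By $\cD_R(u)$, $\Lambda_R(v)$ contains at least $M+1$ Poisson points, hence at least $M$ besides $y$. All of them lie within the diameter $\sqrt d R$ of the cube, so $r_M(y)\le\sqrt dR$, almost surely.
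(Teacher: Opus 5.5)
Your proof is correct and has the same skeleton as the paper's. The case $y\in Q_R(u)$ uses the identical diameter bound. The case $y\notin Q_R(u)$ is refuted in the same way: take $z$ the point of $\overline{\Lambda_R(u)}$ nearest to $y$, and find a block $\Lambda_R(v)$ with $\|v-u\|_\infty\le d$ (hence carrying at least $M+1$ points, none equal to $y$) lying strictly inside $B(y,\|y-z\|)$.

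The only difference is how that block is found. The paper takes $y'$ to be the point where the segment $[y,z]$ leaves $Q_R(u)$. Since $\|y'-z\|\ge dR\ge\sqrt d R$, the ball $B(y',\|y'-z\|)$ contains the block of $Q_R(u)$ whose closure contains $y'$. Moreover $B(y',\|y'-z\|)\subset B(y,\|y-z\|)$ by the triangle inequality, so no coordinate bookkeeping is needed. You instead choose $v$ by clamping coordinates into $[u_j-d,u_j+d]$ and compare squared distances. This is more computational, but it gives an explicit margin: $\|y-w\|^2\le\|y-z\|^2-d(d-1)R^2$ for every $w\in\Lambda_R(v)$.

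This margin is $0$ when $d=1$, so your remark that ``the strict margin absorbs'' the boundary issues is not accurate there. In $d=1$ you only get $\|y-w\|\le\|y-z\|$, with equality only at $w=z$. You then have to close the argument either via the half-open convention for $\Lambda_R(u)$ or via the fact that a.s.\ no Poisson point sits at the deterministic endpoint $z$. The paper's ``strictly contain'' has the same degenerate case $\sqrt d=d$, and the lemma is only needed for $d\ge 2$, so this is a minor point.
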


\begin{proof}
Let $u \in \Z^d$, $R\in \N^*$ and $y \in \chi$. Let us assume that the event $\cD_R(u)$ holds and $B(y, r_M(y)) \cap \Lambda_R (u) \neq \emptyset$. In a first case, let us consider that $y \in Q_R(u)$. So there exists $v \in \Z^d$ with $\| u-v \|_\infty \leq d$ such that $y \in \Lambda_R(v)$. The hypercube $\Lambda_R(v)$ having a diameter equal to $\sqrt{d} R$, it is included in the ball $B(y, \sqrt{d} R)$. Thanks to $\cD_R(u)$, the block $\Lambda_R(v)$ contains at least $M+1$ Poisson points. So the same holds for $B(y, \sqrt{d} R)$ meaning that $r_M(y)$ is at most $\sqrt{d} R$, which proves to the searched inclusion.

Now let us assume that $y \notin Q_R(u)$. We are going to prove that this assumption leads to a contradiction. Let $z \in \partial \Lambda_R (u)$ be the element of $\R^d$ minimizing the Euclidean distance between $y$ and the compact set $\partial \Lambda_R (u)$, {\it i.e.}, $d_2 (y, \partial \Lambda_R (u)) = \| y-z \|$. Thus, let $y'$ be the intersection point between the segment $[y,z]$ and $\partial Q_R (u)$, see Figure \ref{fig:inclusion}. By construction of $Q_R(u)$, $\| y'-z \|$ is larger than $dR$. So, as previously, the ball $B(y', \| y'-z\|)$ has to strictly contain some block $\Lambda_R(v)$ (with diameter $\sqrt{d} R$), with $\| u-v \|_\infty \leq d$, which itself contains at least $M+1$ Poisson points thanks to the event $\cD_R(u)$. On the other hand,
\[
B(y', \| y'-z\|) \subset B(y, \| y-z\|) \subset B(y, r_M(y))
\]
since $B(y, r_M(y)) \cap \Lambda_R (u) \neq \emptyset$ and by construction of $z$. We then have proved that the ball $B(y, r_M(y))$ includes in its interior at least $M+1$ Poisson points, contradicting the definition of $r_M(y)$.
\end{proof}

\begin{figure}
\begin{center}
\includegraphics[width=8cm,height=6cm]{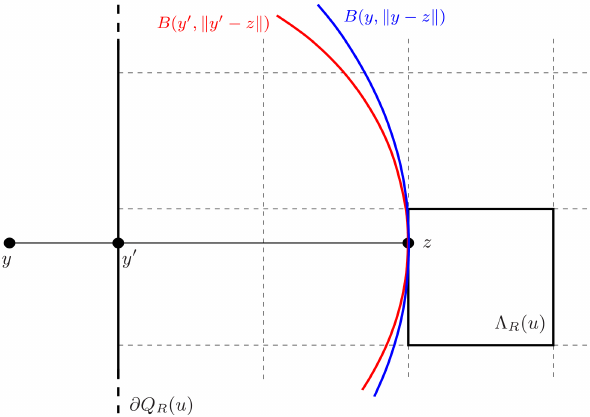}
\caption{\label{fig:inclusion} This picture represents the case where $y \notin Q_R(u)$. The balls $B(y',\|y'-z\|)$ and $B(y,\|y'-z\|)$ are resp. depicted in red and blue to illustrate the inclusion of the first one into the second one.}
\end{center}
\end{figure}

We are now well equipped to study the dependence between the random indicators $(\cY_u)_{u\in \Z^d}$.

\begin{lemma}
\label{l:depfini}
There exists a positive constant $\kappa_6 = \kappa_6 (d)$ such that, for any $\varepsilon > 0$ and $R\in \N^*$, $(\cY_u)_{u\in \Z^d}$ is a $\kappa_6$-dependent random field in the following sense: for any $U,V \subset \Z^d$ satisfying that for any $(u,v) \in U \times V$ we have $\| u - v \|_\infty > \kappa_6$, then the families $(\cY_u)_{u\in U} $ and $(\cY_v)_{v\in V}$ are independent. 
\end{lemma}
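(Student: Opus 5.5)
The plan is to show that each $\cY_u$ is a measurable function of the restriction of $\chi$ to a bounded window $W_R(u)$ around $\Lambda_R(u)$, namely
\[
W_R(u) \coloneqq Ru + \left[-R\left(d+\tfrac12+\sqrt d\right) ; R\left(d+\tfrac12+\sqrt d\right)\right[^d .
\]
Independence of Poisson restrictions to disjoint sets then gives the claim. Concretely, we take $\kappa_6$ so large that $\|u-v\|_\infty>\kappa_6$ forces $W_R(u)\cap W_R(v)=\emptyset$. The choice $\kappa_6 = 2d+1+2\sqrt d$, rounded up, works and depends only on $d$. If $U$ and $V$ are as in the statement, then $\bigcup_{u\in U}W_R(u)$ and $\bigcup_{v\in V}W_R(v)$ are disjoint, so $\chi$ restricted to the two sets are independent, and hence so are the families $(\cY_u)_{u\in U}$ and $(\cY_v)_{v\in V}$. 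Under the Palm measure the extra point at $0$ is deterministic, so this independence still holds.

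The key step is the locality of $\cY_u$. Write $\cY_u=\1_{\cD_R(u)}\cdot \1_{(ii)}$. The event $\cD_R(u)$ depends only on $\chi\cap Q_R(u)\subset \chi\cap W_R(u)$. On $\cD_R(u)^c$ we have $\cY_u=0$. So it suffices to show that, on $\cD_R(u)$, condition $(ii)$ is determined by $\chi\cap W_R(u)$. By Lemma \ref{p:local}, on $\cD_R(u)$ every $y\in\chi$ with $B(y,r_M(y))\cap\Lambda_R(u)\neq\emptyset$ satisfies $y\in Q_R(u)$ and $r_M(y)\le \sqrt d R$. For such $y$, the ball $B(y,r_M(y))$ lies in $W_R(u)$, so $r_M(y)$ is computed from $\chi\cap W_R(u)$ alone. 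Conversely, for $y\in Q_R(u)$ the ball $B(y,\sqrt dR)$ lies in $W_R(u)$. Hence, from the configuration in $W_R(u)$, one can decide whether $r_M(y)\le\sqrt dR$, and if so compute $r_M(y)$ exactly.

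It follows that, on $\cD_R(u)$, the relevant set $\{y\in\chi : B(y,r_M(y))\cap\Lambda_R(u)\neq\emptyset\}$ equals $\{y\in\chi\cap Q_R(u): r_M(y)\le \sqrt dR,\ B(y,r_M(y))\cap\Lambda_R(u)\ne\emptyset\}$, which is a function of $\chi\cap W_R(u)$. Condition $(ii)$ only involves these balls intersected with $\Lambda_R(u)$, so it too is a function of $\chi\cap W_R(u)$ on $\cD_R(u)$. Therefore $\cY_u$ is $\sigma(\chi\cap W_R(u))$-measurable.

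The main obstacle is making this measurability precise. The difficulty is that the definition of $(ii)$ quantifies over all $x\in\chi$, including far-away points whose radius $r_M(x)$ depends on the configuration outside the window. This is resolved by the two directions noted above: Lemma \ref{p:local} discards the far points on $\cD_R(u)$, and the near points have $r_M$ determined locally. One must only be careful that the identification of the relevant set is carried out on $\cD_R(u)$, itself a local event. The rest is bookkeeping of $\infty$-distances to fix $\kappa_6$.
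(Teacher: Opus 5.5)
Your proof is correct and follows essentially the paper's route. On the local event $\cD_R(u)$, Lemma \ref{p:local} confines every relevant ball $B(x,r_M(x))$ to a cube of side of order $dR$ around $Ru$, so $\cY_u$ depends only on $\chi$ in that cube, and independence of the Poisson process on disjoint sets concludes. The paper bounds $\sqrt d R\le dR$ and takes $\kappa_6=2(2d+\frac12)$, whereas you keep $\sqrt d$. You also check explicitly that, on $\cD_R(u)$, the relevant points and their radii $r_M$ are determined by $\chi\cap W_R(u)$, and you remark that the Palm point at $0$ is deterministic; both fill in details the paper leaves implicit.
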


\begin{proof}
Let $u\in\Z^d$. Let us check that both items in Definition \ref{d:good} concerning the $\varepsilon$-goodness of the block $\Lambda_R (u)$ only depend on $\chi$ through a neighborhood of $Ru$. About Item $(i)$, recall that the event $\cD_R(u)$ defined in \eqref{e:cDu} depends only on $\chi \cap Q_R(u)$, where $Q_R(u)$ is defined in \eqref{e:Du}.

Lemma \ref{p:local} says that, on the event $\cD_R(u)$, a Poisson point $x$ such that $B(x, r_M(x)) \cap \Lambda_R (u) \neq \emptyset$ has to satisfy $x \in D_R(u)$ and $r_M(x) \leq \sqrt{d} R$. Hence, the ball $B(x,r_M(x))$ is included in the set
$$
Q'_R(u) \coloneqq \left\{ z_1 + z_2 \,:\, z_1 \in Q_R (u) \,,\, \|z_2\| \leq \sqrt{d} R \right\} \subset Ru + \left[ -R \left( 2d+ \frac{1}{2} \right) ; R \left( 2d+ \frac{1}{2} \right)\right[^d
$$
and the event occurring in Item $(ii)$ of Definition \ref{d:good}, namely
$$
\big\{ B(x, r_M(x) ) \cap \Lambda_R (u) \textrm{  contains an Euclidean ball of radius }2 \varepsilon \big\} \,,
$$
only depends on $\chi \cap D'_R(u)$. Therefore $\cY_u$ only depends on $\chi \cap D'_R(u)$ which proves that $(\cY_u)_{u\in \Z^d}$ is $\kappa_6$-dependent with
$$
\kappa_6 = \kappa_6 (d) \coloneqq 2 \Big( 2d + \frac{1}{2} \Big)\,.
$$
\end{proof}

It is possible to pick $R$ large enough and $\varepsilon > 0$ small enough so that the probability $\P[ \cY_0^{(\varepsilon, R)} = 1 ]$ is as large as we want.

\begin{lemma}
\label{l:Y}
For any $\eta' \in (0,1)$, there exist $R_1 = R_1 (\eta ', d, M)\in \N^*$ and $\varepsilon_1 = \varepsilon_1 (\eta ', d, M) >0$ such that
\[
\P \big[ \cY_0^{(\varepsilon_1, R_1)} = 1 \big] \geq \eta' ~.
\]
\end{lemma}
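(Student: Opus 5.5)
Fix $\eta'\in(0,1)$. The plan is to choose $R$ first so that Item $(i)$ of Definition \ref{d:good} fails with probability at most $(1-\eta')/2$. Then, with $R$ fixed, we choose $\varepsilon$ so that Item $(ii)$ fails with probability at most $(1-\eta')/2$ on $\cD_R(0)$. A union bound then gives the claim.

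\textbf{Step 1: choosing $R$.} Item $(i)$ is exactly the event $\cD_R(0)$, an intersection of $(2d+1)^d$ events. Each says that a Poisson variable of mean $R^d$ is at least $M+1$; under the Palm measure the count is only larger in the block containing $0$. Hence
\[
\P[\cD_R(0)^c]\le (2d+1)^d\,\P[\mathrm{Poisson}(R^d)\le M],
\]
which tends to $0$ as $R\to\infty$. We take $R_1$ large enough that this is at most $(1-\eta')/2$.

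\textbf{Step 2: choosing $\varepsilon$, with $R=R_1$ fixed.} By Lemma \ref{p:local}, on $\cD_{R_1}(0)$ every $x\in\chi$ with $B(x,r_M(x))\cap\Lambda_{R_1}(0)\neq\emptyset$ lies in the bounded set $Q_{R_1}(0)$. Since $\chi\cap Q_{R_1}(0)$ is a.s. finite, only finitely many balls are relevant. For each such ball $B=B(x,r_M(x))$, define
\[
\delta(x)\coloneqq\sup\{s\ge0:\ B\cap\Lambda_{R_1}(0)\text{ contains a Euclidean ball of radius }s\}.
\]
Then set
\[
\delta\coloneqq\min\{\delta(x):\ x\in\chi\cap Q_{R_1}(0),\ B(x,r_M(x))\cap\Lambda_{R_1}(0)\neq\emptyset\},
\]
with $\delta=+\infty$ if the set is empty. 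Item $(ii)$ holds as soon as $2\varepsilon\le\delta$. It therefore suffices to show $\delta>0$ a.s. on $\cD_{R_1}(0)$. Then $\P[\cD_{R_1}(0)\cap\{\delta<2\varepsilon\}]\to0$ as $\varepsilon\to0$ by monotone convergence, and we pick $\varepsilon_1$ accordingly.

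\textbf{Main obstacle: showing $\delta(x)>0$ a.s. for each relevant $x$.} Let $B$ be a closed ball of positive radius ($r_M(x)>0$ a.s.) meeting the box $\Lambda=\Lambda_{R_1}(0)$. The intersection $B\cap\overline{\Lambda}$ is a convex set, so it has nonempty interior unless it is contained in a hyperplane. That happens only when $B$ and $\overline\Lambda$ touch without overlapping, i.e. when $d_2(x,\Lambda)=r_M(x)$ exactly.

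This tangency is a null event. By the Mecke formula applied to the finite set of points in $Q_{R_1}(0)$, it suffices to check, for a fixed point $x$, that $\P[r_M(x)=d_2(x,\Lambda)]=0$. This holds because $r_M(x)$ has a continuous distribution; the Palm point $0$ is handled the same way.

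Outside this null event, $B\cap\Lambda$ contains an open ball, so $\delta(x)>0$. The half-openness of $\Lambda$ does not matter, since a slightly smaller ball fits inside. Taking the minimum over finitely many points then gives $\delta>0$ a.s., which concludes the argument with $R_1,\varepsilon_1$ depending only on $(\eta',d,M)$.
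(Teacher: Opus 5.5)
Your proof is correct and follows essentially the paper's route: choose $R_1$ so that $\cD_{R_1}(0)$ is likely, then use Lemma \ref{p:local} to reduce Item $(ii)$ to a minimum of finitely many a.s.\ positive inradii and let $\varepsilon\to 0$. The only differences are that the paper multiplies two bounds $\sqrt{\eta'}$ where you use a union bound, and uses rational centers and radii to make the inradius measurable. Your Mecke justification of non-tangency, which the paper merely asserts, is a welcome addition. Strictly, though, $r_M(x)$ has an atom at $\|x\|$ because of the Palm point $0$; this is harmless, since $\|x\|>d_2(x,\Lambda_{R_1}(0))$ for every $x\neq 0$.
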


\begin{proof}
Let $\eta' \in (0,1)$. Recall that the event $\cD_R (0)$ is defined in \eqref{e:cDu} by
$$
\cD_R (0) = \bigcap_{v \in \Z^d \,:\, \| v \|_\infty \leq d} \big\{ | \chi \cap \Lambda_R (v) | \geq M+1 \big\} \,.
$$
The properties of the Poisson point process $\chi$ allow us to compute
\begin{eqnarray*}
\P \big[ \cD_R(0) \big] & = & \P \big[ \big\{ | \{ \chi \cap \Lambda_R (0) \} | \geq M+1 \big\} \big] ^{(2d + 1)^d} \\
& = & \big( 1 - \P [\cP^{R^d} \leq M] \big)^{(2d + 1)^d}
\end{eqnarray*}
where $\cP^{R^d}$ is a random variable with Poisson distribution with parameter $\cL^d (\Lambda_R (0)) = R^d$. Thus, using
$$
\P \big[ \cP^{R^d} \leq M \big] = \sum_{k=0}^M \frac{R^{dk}}{k!} e^{-R^d} \leq (M+1) R^{dM} e^{-R^d}
$$
which tends to $0$ as $R\to +\infty$, we can therefore pick $R_1 = R_1 (\eta', d, M) \in \N^*$ large enough so that
$$
\P \big[ \cD_{R_1}(0) \big] = \Big( 1 - (M+1) R_1^{dM} e^{-R_1^d} \Big)^{(2 d + 1)^d}  \geq \sqrt{\eta'} \,.
$$

Given this value $R_1$, we now tune the parameter $\varepsilon$. Let $x \in \chi$ such that $B(x, r_M(x)) \cap \Lambda_{R_1} (0) \neq \emptyset$. A.s. the ball $B(x, r_M(x))$ is not tangent to the block $\Lambda_{R_1}(0)$. This means that the random variable
$$
\cR (x) \coloneqq \sup \big\{ r\in \Q \,:\, \exists c \in \Q^d \,,\, B (c, r) \subset B(x, r_M(x) ) \cap \Lambda_{R_1} (0) \big\}
$$
is a.s. positive. The rational sets $\Q$ and $\Q^d$ are used in the definition above to ensure the measurability of $\cR(x)$. Thus, let us consider the (non-negative) random variable
$$
\cV \coloneqq \inf \big\{ \cR (x) \,:\, x \in \chi \; \mbox{ and } \; B(x, r_M(x) ) \cap \Lambda_{R_1} (0) \neq \emptyset \big\} ~.
$$
On the event $\cD_{R_1}(0)$, any $x \in \chi$ such that $B(x, r_M(x) ) \cap \Lambda_{R_1} (0) \neq \emptyset$ has to belong to the set $Q_{R_1}(0)$ thanks to Lemma \ref{p:local}. So, on the event $\cD_{R_1}(0)$, the infimum $\cV$ can be expressed as
\begin{equation}
\label{cV-min}
\cV \coloneqq \min \big\{ \cR (x) \,:\, x \in \chi \cap Q_{R_1}(0) \; \mbox{ and } \; B(x, r_M(x) ) \cap \Lambda_{R_1} (0) \neq \emptyset \big\}
\end{equation}
since the set $\chi \cap Q_{R_1} (0)$ is a.s. finite. Because each $\cR(x)$ involved in (\ref{cV-min}) is positive, their minimum is a.s. positive too. Therefore, we can choose $\varepsilon_1 = \varepsilon_1 (\eta', d, M) > 0$ small enough such that
$$
\P \big[ \cV \geq  2\varepsilon_1 \,|\, \cD_{R_1}(0) \big] \geq \sqrt{\eta'} \,.
$$
Notice that the conditional distribution of $\cV$  depends  only on parameters $\eta'$, $d$ and $M$. Let us conclude:
\[
\P \big[ \cY_0^{(\varepsilon_1, R_1)} = 1 \big] \geq \P \big[ \cD_{R_1} (0) \cap \{ \cV \geq 2 \varepsilon_1 \} \big] = \P \big[ \cV \geq  2\varepsilon_1 \,|\, \cD_{R_1}(0) \big] \, \P \big[ \cD_{R_1}(0) \big] \geq \eta' ~.
\]
\end{proof}

This section ends with the announced stochastic domination result.

\begin{lemma}
\label{l:LSS}
For any $\eta \in (0,1)$, there exist $\varepsilon_0>0$ and $R_0\in \N^*$ (only depending on parameters $\eta,d,M$) such that the random field $(\cY_u^{(\varepsilon_0, R_0)} )_{u\in \Z^d}$ stochastically dominates a family $(\cZ_u)_{u\in \Z^d}$ of i.i.d. Bernoulli random variables with parameter $\eta$.
\end{lemma}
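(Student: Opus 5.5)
The plan is to apply the classical domination theorem of Liggett, Schonmann and Stacey for $k$-dependent random fields. In the form we need, it says the following. For every dimension $d$, every integer $k \geq 1$ and every $\eta \in (0,1)$, there is a threshold $\eta' = \eta'(\eta, d, k) \in (0,1)$ with this property: if $(\cY_u)_{u \in \Z^d}$ is a $k$-dependent $\{0,1\}$-valued random field with $\P[\cY_u = 1] \geq \eta'$ for every $u$, then $(\cY_u)$ stochastically dominates an i.i.d. Bernoulli($\eta$) field $(\cZ_u)_{u \in \Z^d}$.

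The first step is to check that the dependence notion of Lemma \ref{l:depfini} matches the hypothesis of that theorem. In Liggett--Schonmann--Stacey, $k$-dependence means that families indexed by sets at distance greater than $k$ are independent. Lemma \ref{l:depfini} gives exactly this, with the $\ell^\infty$ distance on $\Z^d$ and $k = \kappa_6(d)$. This constant depends only on $d$, and in particular not on $\varepsilon$ or $R$. That uniformity is what makes the argument work: the threshold $\eta'$ can be fixed before the parameters $(\varepsilon, R)$ are chosen. If one prefers a formulation based on graph distance on $\Z^d$, any $\ell^\infty$-dependence range $\kappa_6$ converts into a graph-distance range of at most $d\kappa_6$, which still depends only on $d$.

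Next, given $\eta \in (0,1)$, we set $\eta' \coloneqq \eta'(\eta, d, \kappa_6(d))$ from the Liggett--Schonmann--Stacey theorem. Applying Lemma \ref{l:Y} with this $\eta'$ gives $R_1 = R_1(\eta', d, M)$ and $\varepsilon_1 = \varepsilon_1(\eta', d, M)$ such that $\P[\cY_0^{(\varepsilon_1, R_1)} = 1] \geq \eta'$. We then set $\varepsilon_0 \coloneqq \varepsilon_1$ and $R_0 \coloneqq R_1$. By translation invariance of the Poisson point process, $\P[\cY_u^{(\varepsilon_0, R_0)} = 1] = \P[\cY_0^{(\varepsilon_0, R_0)} = 1] \geq \eta'$ for every $u$. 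Strictly speaking, we work under the Palm measure, which adds a point at $0$. Under the stationary measure the blocks are exactly identically distributed, so the cleanest route is to run the argument there. Alternatively one can note that the added point is harmless or absorbed into the constants; the paper already treats the $\cY_u$ as identically distributed, and I would follow that convention. Finally, $\eta'$ depends only on $(\eta, d)$, hence $\varepsilon_0$ and $R_0$ depend only on $(\eta, d, M)$, as the statement requires.

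I expect no real obstacle here, since this lemma is essentially bookkeeping around the external theorem. The only point that needs care is the order of the choices. The dependence range must be independent of $(\varepsilon, R)$, which Lemma \ref{l:depfini} guarantees, so that $\eta'$ can be fixed first and Lemma \ref{l:Y} invoked afterwards. The second point to verify is that the measurability and dependence structure of the field $(\cY_u)$ matches the hypotheses of Liggett--Schonmann--Stacey, namely a $\{0,1\}$-valued field on $\Z^d$ with finite-range dependence. This holds directly from Lemma \ref{l:depfini}.
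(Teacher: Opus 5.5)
Your proposal is correct and follows the paper's argument. Because the dependence range $\kappa_6(d)$ from Lemma \ref{l:depfini} is uniform in $(\varepsilon,R)$, the Liggett--Schonmann--Stacey threshold $\eta'=\eta'(\eta,d)$ is fixed first, and Lemma \ref{l:Y} then produces $(\varepsilon_0,R_0)$. On the Palm measure, you do not need to switch to the stationary law. The theorem only requires $\P[\cY_u=1]\geq \eta'$ for each $u$. Since $\cY_u$ is a function of $\chi$ on a cube of side of order $dR$ around $Ru$, only boundedly many $u$ (a number depending on $d$ alone) see the extra point at $0$. For those $u$, the proof of Lemma \ref{l:Y} goes through unchanged after possibly decreasing $\varepsilon$: the extra point only helps $\cD_R(u)$, and a.s. non-tangency still holds.
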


\begin{proof}
This is an application of the well known stochastic domination result by Liggett, Schonmann and Stacey (see Theorem 0 in \cite{LSS}). Let $\eta \in (0,1)$ and consider a family $(\cZ_u)_{u\in \Z^d}$ of i.i.d. Bernoulli random variables with parameter $\eta$. First, Lemma \ref{l:depfini} asserts that $(\cY_u^{(\varepsilon, R)} )_{u\in \Z^d}$ is a $\kappa_6$-dependent random field with $\kappa_6 = \kappa_6(d)$. Then, Theorem 0 in \cite{LSS} says that there exists some constant $\eta' \in (0,1)$, depending on $\eta,d$ and the range of dependence $\kappa_6 (d)$ but uniform on $\varepsilon,R$, such that if there exists a couple of parameters $(\varepsilon, R)$ satisfying $\P [ \cY_0^{(\varepsilon, R)}  =1 ] \geq \eta'$, then $(\cY_u^{(\varepsilon, R)} )_{u\in \Z^d} $ dominates stochastically $(\cZ_u)_{u\in \Z^d}$. The latter condition is ensured by Lemma \ref{l:Y}.
\end{proof}

\subsection{Good blocks and pioneer points}
\label{s:nice}

Let us now introduce the notion of {\it nice points}. Roughly speaking, the $n$-th step $X_n$ of the $K$-th nearest neighbor random walk is a nice point if $X_n$ is close to a good block that has not been explored by the process yet.

\begin{definition}
\label{d:nice}
For any $\varepsilon >0$, $R\in \N^*$ and $n\in \N$, $X_n$ is called an $(\varepsilon,R)$-nice point if there exists $u \in \Z^d$ such that
\begin{itemize}
\item[$(i)$] $\Lambda_R (u)$ is $\varepsilon$-good;
\item[$(ii)$] $E_n \cap \Lambda_R(u) = \emptyset$;
\item[$(iii)$] $E_{n+1} \cap \Lambda_R(u) \neq  \emptyset$.
\end{itemize}
\end{definition}

Nice points are useful for two reasons: nice points are pioneer points (Lemma \ref{l:nice1} below) and they occur along the trajectory of $(X_n)_{n\geq 0}$ proportionally with the number of good blocks that the random walk has encountered (Lemma \ref{l:pionnier-boite} further).

\begin{lemma}
\label{l:nice1}
For any $\varepsilon>0, R\in \N^*$ and $ n\in \N$, if $X_n$ is an $(\varepsilon, R)$-nice point then
\begin{itemize}
\item[$(i)$] $r_M(X_n)\le \sqrt{d}R$;
\item[$(ii)$] $X_n$ is a $(\varepsilon, \sqrt{d} R)$-pioneer point.
\end{itemize}
\end{lemma}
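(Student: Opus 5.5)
Fix $u$ as in Definition \ref{d:nice}. By Items $(ii)$ and $(iii)$ of that definition, $E_{n+1}\setminus E_n = B(X_n,r_M(X_n))\setminus E_n$ meets $\Lambda_R(u)$. In particular $B(X_n,r_M(X_n))\cap\Lambda_R(u)\neq\emptyset$. Since $\Lambda_R(u)$ is $\varepsilon$-good, Item $(i)$ of Definition \ref{d:good} says exactly that the event $\cD_R(u)$ of \eqref{e:cDu} holds. Lemma \ref{p:local}, applied with $y=X_n$, then gives $X_n\in Q_R(u)$ and $r_M(X_n)\le \sqrt{d}R$. This is Item $(i)$.

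For Item $(ii)$ we take $\rho = r_M(X_n)\in[0,\sqrt d R]$ in Definition \ref{d:pioneer}. We need the following.
\begin{itemize}
\item \emph{Condition $(i)$ of Definition \ref{d:pioneer}.} By Item $(ii)$ of Definition \ref{d:good}, applied to $x=X_n$, the set $B(X_n,r_M(X_n))\cap\Lambda_R(u)$ contains a Euclidean ball $B(c,2\varepsilon)$. Since $E_n\cap\Lambda_R(u)=\emptyset$, we get $B(c,\varepsilon)\subset B(c,2\varepsilon)\subset B(X_n,\rho)\setminus E_n$. This is stronger than needed; the factor $2$ is just slack.
\item \emph{Condition $(ii)$ of Definition \ref{d:pioneer}.} The ball $B(X_n,r_M(X_n))$ contains, besides $X_n$, exactly $M$ Poisson points a.s., namely $v_1(X_n),\dots,v_M(X_n)$. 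So it suffices to show that at least one of them lies outside $E_n$. Then $|(\chi\setminus\{X_n\})\cap B(X_n,\rho)\cap E_n|\le M-1<M$.
\end{itemize}

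The second bullet is the main obstacle: $B(X_n,\rho)\setminus E_n$ has nonempty interior, but why must it contain a Poisson point? I would use the boundary point $v_M(X_n)$, which lies on $S(X_n,\rho)$. Suppose by contradiction that $v_1(X_n),\dots,v_M(X_n)$ all belong to $E_n$. My first attempt would exploit how $E_n$ is built. Each of these points lies in some $B(X_k,r_M(X_k))$ with $k<n$. Moreover, $B(X_n,\rho)$ has no Poisson point in its interior other than $X_n,v_1(X_n),\dots,v_{M-1}(X_n)$. One then wants to show that $B(X_n,\rho)\setminus E_n$ would be forced to be empty, contradicting the presence of the unexplored ball $B(c,2\varepsilon)$. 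Note also that $X_n\in E_n$ when $n\ge1$, since $X_n=v_{K_n}(X_{n-1})$.

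I suspect this direct approach does not close in full generality. If so, the fallback is a second choice of $\rho$ together with the trick already used in the proof of Lemma \ref{l:trap}. Take the largest $\rho'\le\rho$ such that $B(X_n,\rho')$ meets $B(c,2\varepsilon)$ in a ball of radius $\varepsilon$, using that $B(c,2\varepsilon)$ is a full ball of radius $2\varepsilon$ inside $B(X_n,\rho)$. Then count only the points of $E_n\cap B(X_n,\rho')$, using the fact that the $M$-th neighbour sits on the boundary sphere and so lies outside $B(X_n,\rho')$ when $\rho'<\rho$. This gives at most $M-1$ points in $B(X_n,\rho')\cap E_n$ other than $X_n$, hence condition $(ii)$, while condition $(i)$ is preserved by the choice of $\rho'$. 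I expect this fallback to be the argument that actually works, and checking that such a $\rho'$ exists with $B(c',\varepsilon)\subset B(X_n,\rho')\cap B(c,2\varepsilon)$ is the remaining geometric step.
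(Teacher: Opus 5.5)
Your proof of Item $(i)$ is the paper's. For Item $(ii)$, your fallback is also the paper's idea, but two things need fixing.

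First, drop the primary choice $\rho=r_M(X_n)$ rather than leaving it as a suspicion, because it genuinely fails. Nothing prevents $v_1(X_n),\dots,v_M(X_n)$ from all lying in $E_n$ while $B(X_n,r_M(X_n))$ still reaches an unexplored good block. For instance, with $M=1$ and $n=1$, let $0$ and $X_1$ be mutual nearest neighbours; then $v_1(X_1)=0\in E_1$.

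Second, as written the fallback does not give the strict inequality you need. The \emph{largest} $\rho'\le\rho$ for which $B(X_n,\rho')\cap B(c,2\varepsilon)$ contains a ball of radius $\varepsilon$ is $\rho$ itself. That puts $v_M(X_n)$ back into the closed ball.

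The paper simply takes $\rho'=r_M(X_n)-\varepsilon$ and keeps the same centre $c$. Since $B(c,2\varepsilon)\subset B(X_n,r_M(X_n))$, you have $\|c-X_n\|+2\varepsilon\le r_M(X_n)$. Hence $B(c,\varepsilon)\subset B(X_n,\rho')$. Also $B(c,\varepsilon)\subset\Lambda_R(u)\subset\R^d\setminus E_n$, and $\rho'\in[\varepsilon,\sqrt d R]$. Because $\rho'<r_M(X_n)$, the closed ball $B(X_n,\rho')$ contains at most $M-1$ points of $\chi\setminus\{X_n\}$. So condition $(ii)$ of Definition \ref{d:pioneer} holds without looking at $E_n$ at all. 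In particular, the factor $2$ in Item $(ii)$ of Definition \ref{d:good} is not slack: it is exactly what pays for shrinking the radius by $\varepsilon$. Your ``remaining geometric step'' is just this one line.
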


\begin{proof}
Let $\varepsilon > 0$, $R \in \N^*$ and $n \in \N$. Let us assume that $X_n$ is an $(\varepsilon, R)$-nice point and let us denote by $x$ the Poisson point such that $X_n = x$. Let $u \in \Z^d$ be the vertex satisfying Items $(i)$-$(iii)$ of Definition \ref{d:nice}. Since $E_{n+1} \setminus E_{n} \subset B(x, r_M (x))$, the fact that $X_n = x$ is an $(\varepsilon, R)$-nice point implies that the ball $B(x, r_M (x))$ overlaps the good block $\Lambda_R (u)$. This has two consequences. First, by Lemma \ref{p:local}, the radius $r_M(x)$ has to be smaller than $A := \sqrt{d} R$, which establishes Item (i) of the lemma.
Second, $B(x,r_M(x)) \cap \Lambda_R (u)$ contains an Euclidean ball with radius $2\varepsilon$, say $B(c,2\varepsilon)$. 

We can then prove that $X_n = x$ is an $(\varepsilon, A)$-pioneer point. Let us set $\rho \coloneqq r_M(x) - \varepsilon$ which is positive since $r_M(x) \geq 4 \varepsilon$. Indeed we know that $B(c,2\varepsilon)\subset B(x,r_M(x)) \setminus \{x\}$. We can immediately write $0 < \rho < r_M (x) \leq A$. Moreover, the ball $B(c,\varepsilon)$ is included in $B(x,\rho)$ by choice of $\rho$ and also in the block $\Lambda_R(u)$ which itself does not intersect $E_n$ (Item $(ii)$ of Definition \ref{d:nice}). So,
$$
B(c,\varepsilon) \subset B(x,\rho) \cap (\R^d \setminus E_n) \,,
$$
proving Item $(i)$ of Definition \ref{d:pioneer}. Finally, $\rho < r_M(x)$ implies that
$$
\big| \big\{ (\chi \setminus \{x\}) \cap (B (x,\rho) \cap E_n) \big\} \big| \leq \big| \big\{ (\chi \setminus \{x\}) \cap B (x,\rho) \big\} \big| < M  \,,
$$
proving Item $(ii)$ of Definition \ref{d:pioneer}.
\end{proof}

For any $n\in \N$, we set
$$
\fE_n \coloneqq \{ u\in \Z^d \,:\, \Lambda_R(u) \cap E_n \neq \emptyset \}
$$
and $\fE_{\infty} = \cup_{n\geq 0} \fE_n$. Let $\cA_m$ be the set of all the animals with size $m$ in $\Z^d$-- an animal of size $m$ is a connected subset of $\Z^d$, with cardinality $m$, containing the origin $0$. Let also $\cA \coloneqq \cup_{m\geq 1} \cA_m$ be the set of all animals in $\Z^d$. Notice that, for any $n\in \N$, $\fE_n \in \cA$ and that $(\fE_n)_{n\in \N}$ is non decreasing w.r.t. $n$ for the inclusion.

The following result compares the number of pioneer points and the number of good blocks in the vicinity of the trajectory $(X_n)_{n \in \N}$.

\begin{lemma}
\label{l:pionnier-boite}
There exists a constant $\kappa_7 = \kappa_7 (d) > 0$ such that, for any $\varepsilon > 0$ and $R \in \N^*$, the following inequality holds:
$$
| \V_p (\varepsilon, \sqrt{d} R) | \geq  \kappa_7 \, \big| \big\{ u \in \fE_{\infty} \,:\,  \cY^{(\varepsilon, R)}_u = 1 \big\} \big|  \,,
$$
where $\V_p (\varepsilon, \sqrt{d} R) = \big\{ x \in \chi \,:\, \exists n \geq 0 , \, X_n = x \mbox{ and $X_n$ is $(\varepsilon, \sqrt{d} R)$-pioneer} \big\}$.
\end{lemma}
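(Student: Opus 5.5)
The plan is to define a map from good blocks in $\fE_\infty$ to nice points, show that each nice point is the image of a bounded number of blocks, and then invoke Lemma \ref{l:nice1} to turn nice points into pioneer points.

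\emph{The map.} Fix $u \in \fE_\infty$ with $\cY_u^{(\varepsilon,R)}=1$. Since $E_0=\emptyset$ and $(E_n)$ is nondecreasing, there is a unique first time $n_u\ge 0$ with $E_{n_u+1}\cap\Lambda_R(u)\neq\emptyset$. At that time $E_{n_u}\cap\Lambda_R(u)=\emptyset$, and $\Lambda_R(u)$ is $\varepsilon$-good. Thus Items $(i)$–$(iii)$ of Definition \ref{d:nice} hold, so $X_{n_u}$ is an $(\varepsilon,R)$-nice point. By Lemma \ref{l:nice1}, $X_{n_u}$ is then an $(\varepsilon,\sqrt d R)$-pioneer point, i.e.\ $X_{n_u}\in\V_p(\varepsilon,\sqrt dR)$. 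This defines a map $\Phi: u\mapsto X_{n_u}$ from the set of good blocks in $\fE_\infty$ into $\V_p(\varepsilon,\sqrt d R)$.

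\emph{Bounding the fibres.} The key step is to show that each fibre of $\Phi$ has cardinality at most a constant $N(d)$; then $\kappa_7=1/N(d)$ works. Suppose $\Phi(u)=x$. Since $E_{n_u+1}\setminus E_{n_u}\subset B(x,r_M(x))$, the ball $B(x,r_M(x))$ meets $\Lambda_R(u)$.
\begin{itemize}
\item Because $\Lambda_R(u)$ is good, the event $\cD_R(u)$ holds.
\item Lemma \ref{p:local} then gives $x\in Q_R(u)$, hence $\|u-w\|_\infty\le d$, where $w$ is the index of the block containing $x$.
\end{itemize}
Thus every $u$ in the fibre of $x$ lies in the $\ell^\infty$-ball of radius $d$ around a fixed $w$, so the fibre has at most $(2d+1)^d$ elements.

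\emph{Remaining subtlety.} The fibre should be counted per Poisson point rather than per time index, so that distinct times $n_u$ with the same $X_{n_u}$ are not double-counted. This is harmless, since the bound above depends only on the location of the point $x$. (In any case, pioneer visits to the same point cannot occur at two distinct times, as noted after Definition \ref{d:pioneer}.) Hence
$$
|\V_p(\varepsilon,\sqrt dR)| \;\ge\; (2d+1)^{-d}\,\big|\{u\in\fE_\infty:\cY_u^{(\varepsilon,R)}=1\}\big| .
$$
I expect no real obstacle. The only point needing care is the use of goodness (through $\cD_R(u)$) to localize $x$ via Lemma \ref{p:local}.
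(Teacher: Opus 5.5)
Your proof is correct and follows the same route as the paper: each good block in $\fE_\infty$ is charged to the nice point that first discovers it, Lemma \ref{l:nice1} makes that point an $(\varepsilon,\sqrt d R)$-pioneer point, pioneer points are distinct, and each point is charged by only boundedly many blocks. The one difference is the multiplicity bound, where you use the localization $x\in Q_R(u)$ from Lemma \ref{p:local} to get at most $(2d+1)^d$ blocks per point, whereas the paper uses the radius bound $r_M(X_n)\le\sqrt d R$ and counts the blocks meeting $B(X_n,\sqrt d R)$, getting $(3\sqrt d+1)^d$; your indexing of the discovery time $n_u$ also matches Definition \ref{d:nice} more exactly than the paper's.
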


\begin{proof}
Let $\varepsilon > 0$ and $R \in \N^*$. Given $u \in \fE_{\infty}$, we say that the block $\Lambda_R(u)$ is discovered by $X_n$ when $n$ is the minimal integer $m$ for which $u \in \fE_m$. Hence,
\[
\big| \big\{ u \in \fE_{\infty} \,:\, \cY^{(\varepsilon, R)}_u =1 \big\} \big| = \sum_{n \geq 0} \big| \big\{ u \in \Z^d \,:\, \mbox{ $\Lambda_R(u)$ is $\varepsilon$-good and discovered by $X_n$} \big\} \big| ~.
\]
When $X_n$ discovers the $\varepsilon$-good block $\Lambda_R(u)$, it is necessarily a $(\varepsilon, R)$-nice point whose radius $r_M(X_n)$ is smaller than $\sqrt{d} R$ by Lemma \ref{l:nice1}. So,
\begin{equation}
\label{borneNice}
\big| \big\{ u \in \fE_{\infty} \,:\, \cY^{(\varepsilon, R)}_u =1 \big\} \big| \leq \sum_{n \geq 0} \big| \big\{ u \in \Z^d \,:\, B(X_n, \sqrt{d} R) \cap \Lambda_R(u) \not= \emptyset \big\} \big| \, \1_{\mbox{$X_n$ is $(\varepsilon, R)$-nice}} ~.
\end{equation}
Assuming for the moment that, for any integer $n$,
\begin{equation}
\label{borneDiscover}
\big| \big\{ u \in \Z^d \,:\, B(X_n, \sqrt{d} R) \cap \Lambda_R(u) \not= \emptyset \big\} \big| \leq ( 3 \sqrt{d} + 1 )^d ~,
\end{equation}
we can conclude thanks to (\ref{borneNice}), (\ref{borneDiscover}) and Lemma \ref{l:nice1}. Indeed,
\begin{eqnarray*}
\big| \big\{ u \in \fE_{\infty} \,:\, \cY^{(\varepsilon, R)}_u =1 \big\} \big| & \leq & ( 3 \sqrt{d} + 1 )^d \sum_{n \geq 0} \1_{\mbox{$X_n$ is $(\varepsilon, R)$-nice}} \\
& \leq & ( 3 \sqrt{d} + 1 )^d \sum_{n \geq 0} \1_{\mbox{$X_n$ is $(\varepsilon, \sqrt{d} R)$-pioneer}} \\
& \leq & ( 3 \sqrt{d} + 1 )^d | \V_p (\varepsilon, \sqrt{d} R) |
\end{eqnarray*}
since two pioneer points have to be different Poisson points. The constant $\kappa_7 (d) := ( 3 \sqrt{d} + 1 )^{-d}$ is suitable.

It then remains to prove (\ref{borneDiscover}). Since the diameter of the hypercube $\Lambda_R(u)$ is equal to $\sqrt{d} R$, the constraint $B(X_n, \sqrt{d} R) \cap \Lambda_R(u) \not= \emptyset$ forces the distance $\|X_n - Ru\|$ to be at most $\frac{3}{2}\sqrt{d} R$. Henceforth,
\begin{eqnarray*}
\big| \big\{ u \in \Z^d \,:\, B(X_n, \sqrt{d} R) \cap \Lambda_R(u) \not= \emptyset \big\} \big| & \leq & \Big| \Big\{ u \in \Z^d \,:\, \Big\| \frac{X_n}{R} - u \Big\| \leq \frac{3}{2}\sqrt{d} \Big\} \Big| \\
& \leq & \Big| \, \Z^d \cap \Big( \frac{X_n}{R} + \Big[ -\frac{3}{2}\sqrt{d} , \frac{3}{2}\sqrt{d} \Big]^d \Big) \Big| \\
& \leq & ( 3 \sqrt{d} + 1 )^d ~.
\end{eqnarray*}
\end{proof}

We can now complete the proof of Lemma \ref{l:nice2}.

\begin{proof}[Proof of Lemma \ref{l:nice2}]
For any $\varepsilon>0, A>0, C>0, D>0, k\in \N^*$, we have
\begin{align}
\label{e:nice1}
 \P [ | \V | \geq Dk \,,\, | \V_p (\varepsilon, A) |  < C k ] & \leq \P \left[ | \V_p (\varepsilon, A) | < C k \,,\, |\fE_{\infty} | \geq  k \right] \nonumber \\ & \qquad +  \P \left[  |\fE_{\infty} | <  k  \,,\, \left\vert \chi \cap \left( \bigcup_{u \in \fE_{\infty}} \Lambda_R(u) \right) \right\vert \geq D k \right]\,.
\end{align}
We control the two terms on the right hand side of Equation \eqref{e:nice1} separately. Let us begin with the first term. Before entering into the technical details, we briefly outline the strategy. This term corresponds to the case where the number of blocks close to the trajectory of the random walk is large, while the number of pioneer points along the trajectory remains small. However, near each good block visited by the walk, a nice point (hence a pioneer point) appears along the trajectory. Therefore, if the trajectory passes close to many blocks but produces only few pioneer points, it must encounter many bad blocks ({\it i.e.}, blocks that are not good). Such a situation is highly unlikely when the probability that a block is good is sufficiently close to one. We now turn to the details.

For a given $R\in \N^*$, define $A \coloneqq \sqrt{d} R$. By Lemma \ref{l:pionnier-boite}, we know that
$$ | \V_p (\varepsilon, \sqrt{d} R) | \geq \kappa_7 | \{ u\in  \fE_{\infty} \,:\,  \cY^{(\varepsilon, R)}_u = 1 \}| \,,$$
thus
\begin{align*}
\P \left[ | \V_p (\varepsilon, \sqrt{d} R ) | < C k \,,\, |\fE_{\infty} | \geq  k \right] & \leq \P \left[  |\fE_{\infty} | \geq  k \,,\, \sum_{u\in \fE_{\infty}} \cY_u^{(\varepsilon, R)} \leq C \kappa_7^{-1} k\right] \\ 
& \leq  \P \left[ \exists \fA \in \cA_k \,:\, \sum_{u\in \fA } \cY_u^{(\varepsilon, R)} \leq C \kappa_7^{-1} k  \right]\\
& \leq \sum_{\fA \in \cA_k} \P \left[  \sum_{u\in \fA } \cY_u^{(\varepsilon, R)} \leq C \kappa_7^{-1} k  \right]
\,.
\end{align*}
By Equation (4.24) in \cite{Grimmett}, we know that for any $k \in \N^*$,
\begin{equation}
\label{e:nice3}
|\cA_k| \leq 7^{dk} \,.
\end{equation}
For any given $\eta\in(0,1)$, let us pick $R_0 = R_0 (\eta, d, M) \in \N^*$ and $\varepsilon_0 = \varepsilon_0 (\eta, d, M) > 0$ as in Lemma \ref{l:LSS} such that $(\cY_u^{(\varepsilon_0, R_0)} )_{u\in \Z^d}$ stochastically dominates a family $(\cZ_u)_{u\in \Z^d}$ of i.i.d. Bernoulli random variables with parameter $\eta$. Therefore
\begin{align*}
\P \left[ | \V_p (\varepsilon_0, \sqrt{d} R_0 ) | < C k \,,\, |\fE_{\infty} | \geq  k \right] & \leq \sum_{\fA \in \cA_k} \, \P \left[  \sum_{u\in \fA } \cY_u^{(\varepsilon_0, R_0)} \leq C \kappa_7^{-1} k  \right] \\
& \leq \sum_{\fA \in \cA_k} \, \P \left[  \sum_{u\in \fA } \cZ_u \leq C \kappa_7^{-1} k  \right]\\
& \leq 7^{dk} \, \P \left[ \cU^{k, \eta} \leq C \kappa_7^{-1} k \right] \,,
\end{align*}
where $\cU^{k, \eta} $ is a random variable with a binomial distribution with parameters $(k, \eta)$. Fix $C \coloneqq \kappa_7 /2$, thus $C \kappa_7^{-1} k = k /2$. Then there exists a constant\footnote{Indeed for any $k\in \N^*$ we have $ \log \P \left[ \cU^{k, \eta} \leq   k /2  \right] = \log \P \left[ \cU^{k, 1-\eta} \geq   k /2  \right]\leq -k  K^*_{1-\eta} (1/2) $ where for any $x, p\in [0,1]$ we have $K^*_{p} (x) = \sup\{tx - \log (p e^t + (1-p)) \,:\, t>0\} = x \log (x/p) + (1-x) \log ((1-x) / (1-p))$ the Cram\'er function associated with the Bernoulli distribution with parameter $p$.} $c_1 (\eta) \in (0,\infty)$ such that for any $k\in \N^*$,
\begin{equation}
\label{e:nice5}
 \P \left[ \cU^{k, \eta} \leq   k /2  \right] \leq  e^{- c_1 (\eta) k} \,,
 \end{equation}
where $c_1 (\eta) $ satisfies $\lim_{\eta \rightarrow 1} c_1  (\eta) = +\infty $. 
We fix $\eta_0 = \eta_0 (d) $ such that $e^{- c_1 (\eta_0)} \leq 7^{-2 d}$. Therefore, for this $\eta_0 (d)$ and the corresponding $R_0 (\eta_0,d,M)$ and $\varepsilon_0 (\eta_0,d,M)$ given by Lemma \ref{l:LSS}, for the constant $\kappa_7 = \kappa_7 (d)$ given by Lemma \ref{l:pionnier-boite}, using Equation \eqref{e:nice3}, we obtain that for any $ k\in \N^*$,
\begin{equation}
\label{e:nice2}
\P \left[ | \V_p (\varepsilon_0, \sqrt{d} R_0 ) | < \frac{\kappa_7 }{2} k \,,\, |\fE_{\infty} | \geq  k \right]  \leq   7^{dk} \,e^{- c_1 (\eta_0) k} \leq   7^{-dk} \,.
\end{equation}

We now study the second term on the right hand side of Equation \eqref{e:nice1}. The idea is as follows. This term corresponds to the case where the trajectory of the random walk gets close to only a small number of blocks, while visiting a large number of distinct points of $\chi$. However, the number of points of $\chi$ inside any given collection of blocks follows a Poisson distribution, and is therefore very unlikely to take large values. It is therefore enough to use a union bound to take into account all the possible locations for $\fE_{\infty}$. We now give the details.

By Equation \eqref{e:nice3} we have for any $D\in \N^*, k\in \N^*$, for $R_0 = R_0 (\eta_0,d,M)$ as fixed previously,
\begin{align*}
\P \left[  |\fE_{\infty} | < k  \,,\, \left\vert \chi \cap \left( \bigcup_{u \in \fE_{\infty}} \Lambda_{R_0}(u) \right) \right\vert \geq D k \right] & \leq \sum_{\fA \in \cA_k} \P \left[  \left\vert \chi \cap \left( \bigcup_{u \in \fA }\Lambda_{R_0}(u) \right) \right\vert \geq D k  \right] \\
& \leq 7^{dk} \, \P \left[  \cP^{kR_0^d} \geq D k  \right] \,,
\end{align*}
where $\cP^{kR_0^d} $ is a random variable with Poisson distribution with parameter $kR_0^d$. There exists a constant\footnote{Indeed for any $k\in \N^*$ we have $\log \P \left[ \cP^{kR_0^d} \leq  Dk \right] \leq - k K^*_{R_0^d} ( D) $ where $K_{\ell }^*$ is the Cram\'er function associated with the Poisson distribution with parameter $\ell$, {\it i.e.}, $K^*_{\ell} (x)= \sup\{tx -  \ell (e^t - 1) \,:\, t>0\} = \ell - x + x \log (x/\ell)$ for any $x>0$.} $c_2 (R_0,D) \in (0,\infty)$ such that for any $k\in \N^*$,
\begin{equation}
\label{e:nice6}
 \P \left[ \cP^{kR_0^d} \geq D k\right] \leq   e^{- c_2 (R_0,D) k} \,,
 \end{equation}
where $c_2 (R_0,D)$ satisfies $\lim_{D \rightarrow +\infty } c_2 (R_0,D) = +\infty $.
We fix $D_0 = D_0 (R_0,d)$ such that $e^{- c_2 (R_0,D_0)} \leq 7^{-2 d}$. Therefore, for this $D_0 = D_0(R_0, d)$, for $R_0 = R_0 (d,M)$ as fixed previously, using Equation \eqref{e:nice3}, we obtain that for any $ k\in \N^*$,
\begin{equation}
\label{e:nice4}
\P \left[  |\fE_{\infty} | < k  \,,\, \left\vert \chi \cap \left( \bigcup_{u \in \fE_{\infty}} \Lambda_{R_0}(u) \right) \right\vert \geq D_0 k \right]  \leq 7^{dk} \,e^{- c_2(D_0) k} \leq 7^{-dk} \,.
\end{equation}
We conclude the proof of Lemma \ref{l:nice2} by combining Equations \eqref{e:nice1}, \eqref{e:nice2} and \eqref{e:nice4}.
\end{proof}

%%%%%%%%%%%%%%%%%%%%

\section{Length of the trajectory}
\label{s:L}

This section is devoted to the proof of Theorem \ref{t:L}, {\it i.e.}, the control of the tail distribution of the length $\L$ of the trajectory. Suppose Assumption (BS) is satisfied. 
For all $n \ge 1$, 
\begin{equation}\label{e:decoupage_initial} 
	\P[\L \ge An] \le \P[|\V| \ge n] + \P[\cE_n]
\end{equation}
where 
\[
	\cE_n = \{\L \ge An \text{ and } |\V| \le n\}
\]
and $A>0$ is a large constant depending on $d$ and $M$ which will be chosen at the end of the proof.
By Theorem \ref{t:V}, the first term is bounded above by $C_1 e^{-C_2 n}$.
We thus focus on the second term.

Assume that $\cE_n$ occurs.
First we associate an embedded rooted tree with the random walk $(X_n)_{n\geq 0}$.
Its set of vertices is the range $\V$ of the walk.
Its root is the origin.
For any $n \ge 1$, we put an edge between $X_{n-1}$ and $X_n$ when the vertex $X_n$ is visited for the first time at time $n$.
Equivalently, we can construct the tree dynamically as follows. 
At time $0$, the tree consists solely of its root $\{0\}$. 
As time evolves, whenever the walk visits a new vertex, that vertex is attached by an edge to the vertex from which it was first reached.

For our purposes, it is more convenient to separate the combinatorial structure from its spatial embedding.
More precisely, from the embedded tree constructed above we extract two objects:
\begin{enumerate}
	\item A rooted tree,
	\item An embedding $x$, that is a list of spatial positions $x(v)$ -- actually points of $\chi$ -- indexed by the vertices $v$ of the tree.
\end{enumerate}

In Lemma \ref{l:en_tree_v1} below we list the useful properties of the tree and its embedding.
To state it, we introduce the following notation.
Let $m \ge 0$. 
We denote by $\cT_m$ the set of rooted trees with $m+1$ vertices (equivalently, $m$ edges).
We say that the size of a tree $T \in \cT_m$ is $m$ and we write $\size(T)=m$.
We orient the edges of $T$ away from the root $o$. 
For vertices $v,w$ of $T$, we write $v \to w$ if $v$ is the parent of $w$ (equivalently, if $w$ is a child of $v$).
We define an embedding of $T$ as a family $x(v)$ of distinct points of $\chi$ indexed by the vertices of $T$ such that $x(o)=0$.
Recall that $o$ is the root and that we work under the Palm distribution so $0$ is indeed a point of $\chi$.

\begin{lemma}\label{l:en_tree_v1} On $\cE_n$, there exist $m \in \{1,\dots,n-1\}$, $T \in \cT_m$ and an embedding $x$ of $T$ such that the following holds.
	\begin{enumerate}
		\item For all vertices $v$ of $T$, $|B(x(v),r_M(x(v)) \cap \chi| = M+1$.
		\item For all edges $v \to w$ of $T$, $\|x(w)-x(v)\| \le r_M(x(v))$.
		\item $An \le M \sum_{v} r_M(x(v))$.
	\end{enumerate}
\end{lemma}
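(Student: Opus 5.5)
The plan is to take $T$ to be the combinatorial tree underlying the embedded exploration tree built just above, and $x$ to be its embedding. Set $m = |\V| - 1$, so that $T \in \cT_m$; the vertices are the points of $\V$, with $x(v)$ the Poisson point itself and $x(o) = X_0 = 0$. These points are distinct by construction, since a new vertex is created only when a point is visited for the first time.

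The range of $m$ needs a short check. On $\cE_n$ we have $|\V| \le n$, hence $m \le n-1$. Also $\L \ge An > 0$, so the walk must make at least one genuine move. Points of $\chi$ are a.s. pairwise distinct and $v_k(x) \neq x$, so this move visits a second vertex, and therefore $m \ge 1$.

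Next come the three properties.

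\emph{Item 1.} Almost surely all interpoint distances in $\chi$ are distinct, so $B(y, r_M(y))$ contains exactly $y$ together with its $M$ nearest neighbours $v_1(y), \dots, v_M(y)$. Hence $|B(y, r_M(y)) \cap \chi| = M+1$ for every $y \in \chi$, and in particular for every $x(v)$.

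\emph{Item 2.} Take an edge $v \to w$. It comes from a time $k \ge 1$ with $X_{k-1} = x(v)$ and $X_k = x(w) = v_{K_k}(x(v))$, where $K_k \le M$ under (BS). Hence $\|x(w) - x(v)\| = r_{K_k}(x(v)) \le r_M(x(v))$.

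\emph{Item 3.} The only point requiring care is the bookkeeping in $\L$, which counts each unordered pair $\{x,y\}$ at most once. Every pair contributing to $\L$ is of the form $\{X_{k-1}, X_k\}$. Assign it to an endpoint from which the walk jumped, namely $X_{k-1}$ for some $k$ realizing the pair; this endpoint lies in $\V$.

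From a given point $y \in \V$, the walk can only jump to $v_1(y), \dots, v_M(y)$. So at most $M$ pairs are assigned to $y$, and each has length at most $r_M(y)$. Summing over $y \in \V$ gives
\[
An \le \L \le M \sum_{y \in \V} r_M(y) = M \sum_v r_M(x(v)).
\]

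Finally, the null sets (ties in distances, and points that are not well defined) are discarded at the outset. The statement is therefore a.s., which suffices for bounding $\P[\cE_n]$. I do not expect a real obstacle here; the only subtle point is the assignment of each unordered edge to a jumping endpoint in Item 3.
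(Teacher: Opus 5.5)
Your proposal is correct and follows the paper's proof. You use the exploration tree of $\V$ with its natural embedding, derive Item 1 from the definition of $r_M$ and Item 2 from each new vertex being a $k$-th neighbour with $k \le M$, and get Item 3 by charging each unordered edge to a departure point, which has at most $M$ distinct steps of length at most $r_M$. You also check the range of $m$ and the a.s.\ absence of distance ties explicitly, which the paper leaves implicit.
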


\begin{proof} 
	Assume that $\cE_n$ holds. We defined the tree $T$ and the embedding $x$ in the above discussion.
	Note that $T$ belongs to $\cT_m$ for some $m \in \{1,\dots,n-1\}$.
	Let us now collect the constraints. 
	
	The first item is true by definition of $r_M$.
	
	For all edges $v \to w$ of $T$, $x(w)$ has been first visited by the random walk from $x(v)$,
	and therefore $x(w)$ is the $k$-th neighbor of $x(v)$ for some $k \in \{1,\dots,M\}$. 
	This yields the second item.
	
	The length $\L$ is at least $An$.
	The couple $(T,x)$ does not determine $\L$, but it provides the following upper-bound:
	\[
		\L \le \sum_{v\in T} M \, r_M(x(v)).
	\]
	Indeed, $r_M(x(v))$ is a bound on the length of any steps made by the walk from $x(v)$ and there are at most $M$ such distinct steps.
	Recall that multiple traversals from one vertex to one other are counted only once in $\L$.
	The last item follows.
\end{proof}

Discretizing the $r_M$, we immediately get the following version in which $\N$ denotes the set of non-negative integers.

\begin{lemma}\label{l:en_tree_v2} On $\cE_n$, there exist $m \in \{1,\dots,n-1\}$, $T \in \cT_m$, an embedding $x$ of $T$ 
and a family $r(v)$ of elements of $\N$ indexed by the vertices of $T$ such that the following holds.
	\begin{enumerate}
		\item For all vertices $v$ of $T$, $r(v) \le r_M(x(v))$.
		\item For all vertices $v$ of $T$, $|B(x(v),r(v))| \cap \chi| \le M+1$.
		\item For all edges $v \to w$ of $T$, $\|x(w)-x(v)\| \le r(v)+1$.
		\item $An \le M \sum_{v} (r(v)+1)$.
	\end{enumerate}
\end{lemma}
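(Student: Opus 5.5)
The plan is to start from the tree $T\in\cT_m$ and the embedding $x$ given by Lemma \ref{l:en_tree_v1}, keep them unchanged, and simply set $r(v) \coloneqq \lfloor r_M(x(v)) \rfloor \in \N$ for every vertex $v$ of $T$. Then each of the four items is checked directly, using only the elementary bounds
\[
r(v) \le r_M(x(v)) < r(v)+1 .
\]

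Item 1 is immediate, since $r(v)\le r_M(x(v))$.

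For Item 2, the inequality $r(v)\le r_M(x(v))$ gives $B(x(v),r(v)) \subset B(x(v),r_M(x(v)))$. Hence
\[
|B(x(v),r(v))\cap\chi| \le |B(x(v),r_M(x(v)))\cap\chi| = M+1,
\]
where the last equality is Item 1 of Lemma \ref{l:en_tree_v1}.

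For Item 3, take an edge $v\to w$. Item 2 of Lemma \ref{l:en_tree_v1} gives $\|x(w)-x(v)\|\le r_M(x(v))$, and $r_M(x(v)) < r(v)+1$, so $\|x(w)-x(v)\| \le r(v)+1$.

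For Item 4, Item 3 of Lemma \ref{l:en_tree_v1} gives $An\le M\sum_v r_M(x(v))$. Bounding each term by $r(v)+1$ yields $An \le M\sum_v (r(v)+1)$.

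There is no real obstacle here. The only point needing care is the choice of rounding direction: rounding down makes Items 1 and 2 hold through ball inclusion, while the extra $+1$ in Items 3 and 4 absorbs the rounding error.
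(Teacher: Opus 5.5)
Your proposal is correct and is exactly the paper's argument: keep $T$ and $x$ from Lemma \ref{l:en_tree_v1} and set $r(v)=\lfloor r_M(x(v))\rfloor$. Your check of the four items, via $r(v)\le r_M(x(v))<r(v)+1$ and ball inclusion, just spells out what the paper leaves implicit.
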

\begin{proof}
	The lemma follows by Lemma \ref{l:en_tree_v1} choosing $r(v)=\lfloor r_M(x(v)) \rfloor$ for each vertex $v$.
\end{proof}

Fix $y_1, \dots, y_r$ distinct elements of $\R^d$.
We need to consider the point process $\chi$ both under its stationary distribution 
and under its $r$-order Palm distribution at the points $y_1, \dots, y_r$ for various choices of $y_1, \dots, y_r$.
We use the notation $\tilde\P$ for the former and $\tilde \P_{y_1,\dots,y_r}$ for the latter.
Thus, under $\tilde\P$, $\chi$ is a homogeneous Poisson point process with intensity $1$ and
\begin{equation}\label{e:palm_ajout}
	\tilde \P_{y_1,\dots,y_r} [\chi \in \cdot] = \tilde \P [\chi \cup \{y_1,\dots , y_r\} \in \cdot].
\end{equation}
As usual we denote by $\tilde \E$ and $\tilde \E_{y_1,\dots,y_r}$ the corresponding expected values.
Actually, we only need the multivariate Mecke equation (Theorem 4.4 in \cite{last_penrose_2017}) which states, in our context,
\begin{equation} \label{e:multi-mecke}
    \tilde\E\left[\sum_{y_1,\dots,y_r \in \chi}^{\neq} f(y_1,\dots,y_r,\chi) \right]
    = \int_{(\R^d)^r} \tilde\E_{y_1,\dots,y_r}  [f(y_1,\dots,y_r,\chi) ] \d y_1 \cdots \d y_r
\end{equation}
where $f$ is non-negative and measurable and where the superscrip $\neq$ in the sums means that the $y_1, \dots, y_r$ are assumed to be distinct.

Now fix $x_1, \dots, x_m$ distinct elements of $\R^d \setminus \{0\}$ and set $x_0=0$. 
We wish to work with embeddings given by $x_0, x_1, \dots, x_m$.
For this purpose, we need to label the vertices of our trees.
Therefore, for each tree $T \in \cT_m$, we fix an ordering of its vertices and denote them by $v_0, \dots, v_m$. 
The ordering is arbitrary, with the sole requirement that $v_0$ is the root.

For a tree $T \in \cT_m$,  $x_0, \dots, x_m$ as above and $r_0,\dots,r_m \in \N$, we define the event $\cF_n(T,x,r)$ 
(this makes sense under the Palm distribution $\tilde\P_{x_0,\dots,x_m}$) by
\begin{subequations}\label{Fn}
	\begin{align}
		\cF_n(T,x,r)
		=
		\big\{
			&\text{for all vertices }v_i\text{ of }T, \quad r_i \le r_M(x_i), \label{Fn:discretisation}\\
			&\text{for all vertices }v_i\text{ of }T, \quad |B(x_i,r_i) \cap \chi| \le M+1, \label{Fn:pas_trop_de_points}\\
			&\text{for all edges }v_i \to v_j\text{ of }T, \quad \|x_j-x_i\| \le r_i+1, \label{Fn:pas_loin}\\
			&An \le M \sum_{0 \le i \le \size(T)} (r_i+1)\label{Fn:tres_loin}
		\big\}.
	\end{align}
\end{subequations}

Note that Conditions \eqref{Fn:pas_loin} and \eqref{Fn:tres_loin} are actually deterministic.

\begin{lemma}\label{l:en_tree} We have
	\[
		\P[\cE_n] 
		\le 
		\sum_{m=1}^{n-1} \sum_{T \in \cT_m} \sum_{r\in \N^{m+1}} \int_{(\R^d)^m}  	\tilde \P_{0,x_1,\dots,x_m}[\cF_n(T,x,r)] \d x_1 \cdots \d x_m .
	\]
\end{lemma}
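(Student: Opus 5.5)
The plan is a union bound over $m$, $T$ and $r$, followed by the multivariate Mecke equation \eqref{e:multi-mecke} to pass from sums over points of $\chi$ to integrals.

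\textbf{Step 1: union bound.} By Lemma \ref{l:en_tree_v2}, on $\cE_n$ there exist $m\in\{1,\dots,n-1\}$, $T\in\cT_m$, an embedding $x$ and $r\in\N^{m+1}$ satisfying items 1--4. Writing the embedding as $x_i = x(v_i)$, with $x_0=0$ and $x_1,\dots,x_m$ distinct points of $\chi\setminus\{0\}$, these items are exactly the conditions defining $\cF_n(T,x,r)$, read for the configuration $\chi$. Hence
\[
\1_{\cE_n} \le \sum_{m=1}^{n-1}\sum_{T\in\cT_m}\sum_{r\in\N^{m+1}} \sum^{\neq}_{x_1,\dots,x_m\in\chi\setminus\{0\}} \1_{\cF_n(T,x,r)}(\chi).
\]
Taking expectations under the Palm measure $\P$ and using Tonelli, it suffices to treat each fixed $(m,T,r)$ separately.

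\textbf{Step 2: move to the stationary process.} Under the Palm measure, $\chi$ has the law of $\chi'\cup\{0\}$, where $\chi'$ is distributed according to $\tilde\P$. Points of $\chi\setminus\{0\}$ are then points of $\chi'$, almost surely different from $0$. So the inner expectation equals
\[
\tilde\E\Big[\sum^{\neq}_{x_1,\dots,x_m\in\chi'} f(x_1,\dots,x_m,\chi')\Big],
\qquad
f(x_1,\dots,x_m,\chi') := \1_{\cF_n(T,(0,x_1,\dots,x_m),r)}(\chi'\cup\{0\}).
\]
This $f$ is non-negative and measurable.

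\textbf{Step 3: apply Mecke.} The multivariate Mecke equation \eqref{e:multi-mecke} rewrites this expectation as
\[
\int_{(\R^d)^m}\tilde\E_{x_1,\dots,x_m}\big[f(x,\chi)\big]\,\d x_1\cdots\d x_m .
\]
By \eqref{e:palm_ajout}, $\tilde\E_{x_1,\dots,x_m}[f(x,\chi)]$ is the $\tilde\P$-probability of $\cF_n$ evaluated on $\chi\cup\{0,x_1,\dots,x_m\}$. Applying \eqref{e:palm_ajout} once more, this is exactly $\tilde\P_{0,x_1,\dots,x_m}[\cF_n(T,x,r)]$. This yields the claimed inequality.

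The only delicate point is bookkeeping. One must check that the Palm measure $\P$ coincides with the order-one Palm distribution $\tilde\P_0$. One must also check that the ordering $v_0,\dots,v_m$ makes the correspondence from embeddings to ordered tuples of distinct points well defined: each $(T,x)$ yields one tuple, and overcounting only helps the inequality. Measurability of the events $\cF_n$ is routine.
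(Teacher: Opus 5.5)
Your proposal is correct and follows the paper's proof essentially step for step. The paper also bounds $\1_{\cE_n}$ by a sum over $(m,T,r)$ using Lemma \ref{l:en_tree_v2}, and then runs the same chain of equalities: it identifies $\P$ with $\tilde\P_0$, rewrites the sum as one over $\chi$ evaluated at $\chi\cup\{0\}$ under $\tilde\P$, applies \eqref{e:multi-mecke}, and uses \eqref{e:palm_ajout} to recognize $\tilde\P_{0,x_1,\dots,x_m}[\cF_n(T,x,r)]$.
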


\begin{proof} 
	The lemma follows from Lemma \ref{l:en_tree_v2} by union bound and Mecke formula \eqref{e:multi-mecke}.
    In particular we use Mecke formula as follows.
    Given $m$, $T$ and $r$,
    \begin{align*}
        \E\left[\sum_{x_1,\dots , x_m \in \chi \setminus \{0\}}^{\neq} \1_{\cF_n(T,(0,x_1,\dots,x_m),r)}(\chi)\right] 
        & = \tilde \E_{0}\left[\sum_{x_1,\dots , x_m \in \chi \setminus \{0\}}^{\neq} \1_{\cF_n(T,(0,x_1,\dots,x_m),r)}(\chi)\right] \\
        & = \tilde\E\left[\sum_{x_1,\dots , x_m \in \chi}^{\neq} \1_{\cF_n(T,(0,x_1,\dots,x_m),r)}(\chi \cup \{0\})\right]  \\
        & = \int_{(\R^d)^m} \tilde\E\left[\1_{\cF_n(T,(0,x_1,\dots,x_m),r)}(\chi \cup \{0,x_1,\dots,x_m\})\right] \d x_1 \dots \d x_m \\
        & = \int_{(\R^d)^m} \tilde\P_{0,x_1,\dots,x_m}[\cF_n(T,(0,x_1,\dots,x_m),r)]  \d x_1 \dots \d x_m. 
    \end{align*}
    We used the definition of $\P$ and $\P_0$ in the first equality, \eqref{e:palm_ajout} in the second and fourth equality, and \eqref{e:multi-mecke} in
    the third equality.
\end{proof}

The conditions on the $|B(x_i,r_i) \cap \chi|$ are strongly dependent as the $B(x_i,r_i)$ overlap. The following geometric lemma, which is essentially Stone's Lemma (see \cite{Devroye-Gyorfi-Lugosi}), will enable us to manage them by ensuring that the overlap is bounded. See the left part of Figure \ref{fig:kissing} for an illustration in dimension $2$. If $S$ is an infinite and locally finite subset of $\R^d$, for any $x \in S$, we define $r_M(x,S)$ as we did when $S=\chi$: $r_M(x,S)$ is the distance from $x$ to its $M$-th nearest neighbor in $S$. 

\begin{lemma}\label{l:stone} 
There exists a constant $\kappa_8(d) > 0$ such that the following holds.
Let $S$ be an infinite and locally finite subset of $\R^d$.
\begin{enumerate}
\item For all $x_0 \in \R^d$ such that $x_0 \not \in S$,
\[
\big|\{x \in S : x_0 \in B(x,r_M(x,S))\}| \le M\kappa_8(d).
\]
\item 
Let $S_f$ be a finite subset of $S$.
Let $(r(x))_{x \in S_f}$ be a family of non-negative real numbers such that, for all $x \in S_f$, $r(x) \le r_M(x,S)$.
Then 
\[
\cL^d\left(\bigcup_{x \in S_f} B(x,r(x))\right) \ge \frac 1 {M\kappa_8(d)} \sum_{x \in S_f} \cL^d(B(x,r(x))).
\]
\end{enumerate}
\end{lemma}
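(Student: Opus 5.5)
We prove Item 1 by the classical cone argument behind Stone's lemma, and then deduce Item 2 from Item 1 by a double-counting (Fubini) argument.

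\textbf{Item 1.} Fix $x_0 \notin S$. Cover $\R^d \setminus \{0\}$ by $\kappa_8(d)$ closed cones $C_1,\dots,C_{\kappa_8}$ with apex $0$ and angular radius $\pi/6$. Two unit vectors in the same cone then make an angle at most $\pi/3$. Such a finite cover exists by compactness of the unit sphere, and $\kappa_8$ depends only on $d$.

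We claim that for each $j$, at most $M$ points $x \in S \cap (x_0 + C_j)$ satisfy $\|x - x_0\| \le r_M(x,S)$; summing over $j$ gives the bound $M\kappa_8(d)$. Suppose instead that $M+1$ such points $x^{(1)},\dots,x^{(M+1)}$ lie in the same cone $x_0 + C_j$. Order them so that $\|x^{(1)}-x_0\| \ge \|x^{(i)}-x_0\|$ for all $i$ (ties are broken arbitrarily).

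For $i \ge 2$, the triangle $x_0, x^{(i)}, x^{(1)}$ has angle at most $\pi/3$ at $x_0$, and the side $x_0x^{(1)}$ is the longest of the two sides adjacent to this angle. By the law of cosines, $\|x^{(1)}-x^{(i)}\| \le \|x^{(1)}-x_0\|$. Since $x_0 \notin S$ the point $x^{(i)}\neq x_0$, and the points are distinct.

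So the ball $B(x^{(1)}, \|x^{(1)}-x_0\|)$ contains the $M$ points $x^{(2)},\dots,x^{(M+1)}$ of $S\setminus\{x^{(1)}\}$. This only gives $r_M(x^{(1)},S) \le \|x^{(1)}-x_0\|$, which is not a contradiction: it is compatible with the assumption $\|x^{(1)}-x_0\| \le r_M(x^{(1)},S)$, yielding equality. We therefore need strictness. Equality in the law of cosines forces the angle to be exactly $\pi/3$ and the two norms to be equal. To exclude it, we use cones of angular radius slightly less than $\pi/6$ and argue with strict inequalities. Alternatively, since $x_0\notin S$, we can count only the points $x$ with $\|x-x_0\|<r_M(x,S)$ plus the boundary cases, treating the ties carefully. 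This tie/boundary issue is the main delicate point, and we settle it by the strict-cone choice together with local finiteness. With strictness, $r_M(x^{(1)},S) < \|x^{(1)}-x_0\|$, which contradicts $x_0 \in B(x^{(1)},r_M(x^{(1)},S))$.

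\textbf{Item 2.} Write $U = \bigcup_{x\in S_f} B(x,r(x))$. Since $r(x)\le r_M(x,S)$, each $B(x,r(x))$ is contained in $B(x,r_M(x,S))$. By Fubini,
\[
\sum_{x\in S_f}\cL^d(B(x,r(x))) = \int_U \sum_{x\in S_f} \1_{\{y\in B(x,r(x))\}}\,\d y .
\]
For every $y \notin S$, the integrand is at most $|\{x\in S : y\in B(x,r_M(x,S))\}| \le M\kappa_8(d)$ by Item 1. Since $S$ is countable, it is Lebesgue-null, so this bound holds almost everywhere on $U$. Hence the right-hand side is at most $M\kappa_8(d)\,\cL^d(U)$, which is the claim.
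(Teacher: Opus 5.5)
Your proposal is correct and follows essentially the paper's proof: cones of half-angle strictly below $\pi/6$ (the paper fixes $\theta\in(0,\pi/6)$ from the outset, which is the cleaner way to state your strictness fix), at most $M$ contributing points per cone, and then integration of the pointwise bound $\sum_{x\in S_f}\1_{B(x,r(x))}\le M\kappa_8(d)\,\1_{\bigcup_{x\in S_f}B(x,r(x))}$ off the Lebesgue-null set $S$. Your per-cone count, by contradiction using the farthest of $M+1$ points, repackages the paper's radius-$D_i$ argument and handles ties a bit more cleanly; the appeal to local finiteness and the vague ``alternatively'' remark are unnecessary and can be dropped.
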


\begin{proof} This is essentially the proof of Stone's Lemma. 
We refer to \cite{Devroye-Gyorfi-Lugosi}, Section 5.3, for a more detailed proof with explicit constants.
We give here a short proof of Lemma \ref{l:stone} to make the paper self-contained. Fix some angle $\theta \in (0,\pi/6)$.
To ease notations we assume $x_0=0$ (and thus $0 \not\in S$).
Consider for any unit vector $u$ the cone with opening angle $\theta$ and axis $u$ defined by
\[
\cone(u) =  \{x \in \R^d   : x \cdot u \ge \cos(\theta) \|x\|\}.
\]
As $\theta$ is small enough, 
\begin{equation}
\label{e:key_stone}
\text{for any $x,x' \in \cone(u)$, $\|x'\|>\|x\|$ implies $\|x'-x\|<\|x'\|$}
\end{equation}
(see the right part of Figure \ref{fig:kissing} for an illustration in dimension $2$). Fix a finite family $u_1,\dots,u_{\kappa_8(d)}$ of unit vectors such that the associated family of cones covers $\R^d$.
For each index $i$, set
\[
D_i = \min \{D > 0 : \big| B(0,D) \cap \cone(u_i) \cap S| = M \} \in (0,+\infty].
\]
We then have
\[
\forall x \in S \setminus \Big( \bigcup_i B(0,D_i) \cap \cone(u_i) \Big) , \quad 0 \not\in B(x,r_M(x,S)).
\]
Indeed, fix such an $x$. It belongs to $\cone(u_i)$ for some $i$. Then it does not belong to $B(0,D_i)$ and therefore, by definition of $D_i$ and by \eqref{e:key_stone},
$x$ is strictly closer to all the points of $S \cap B(0,D_i) \cap \cone(u_i)$ than to $0$. This proves the above display. Therefore
\[
\{x \in S : 0 \in B(x,r_M(x,S))\} \subset S \cap \Big( \bigcup_i B(0,D_i) \cap \cone(u_i) \Big)
\]
and the first part of the lemma follows.

From the first part, we deduce the almost sure inequality (i.e., true for any $y \in \R^d \setminus S$)
\[
\sum_{x \in S_f} \1_{B(x,r(x))} \le M\kappa_8(d) \1_{\bigcup_{x \in S_f} B(x,r(x))}.
\]
Integrating the inequality yields the second part of the lemma.
\end{proof}

\begin{figure}
\begin{center}
\begin{tabular}{cp{0.5cm}c}
\includegraphics[width=8cm,height=7cm]{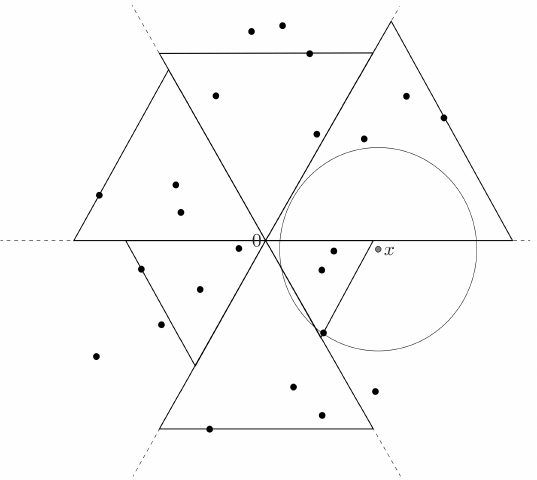} & & \includegraphics[width=6cm,height=5.7cm]{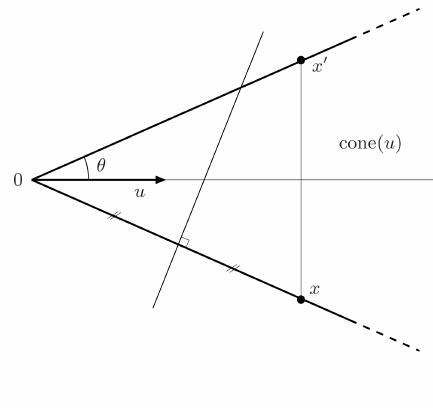}
\end{tabular}
\caption{\label{fig:kissing} To the left: In dimension $d = 2$, the whole space can be covered by $\kappa_8 = 6$ cones with opening angle $\theta = \pi/6$ (the dashed lines indicate their boundaries). Based on these cones, are represented the $6$ triangles whose heights from the apex $0$ are $D_1,\ldots,D_6$. By construction, they all contain exactly $M = 3$ elements of $S$. Moreover, an element $x \in S$, outside the union of those triangles, is depicted in gray: remark that the ball $B(x,r_M(x))$ does not overlap $0$. To the right: Here is the limiting case in (\ref{e:key_stone}) where $x,x' \in \cone(u)$ are such that $\|x\| = \|x'\|$ and $\|x'-x\|$ is maximal. While the opening angle $\theta$ of $\cone(u)$ satisfies $2\theta \leq \pi/3$ (as in the figure), $x'$ will be closer to $x$ than to $0$, meaning that $\|x'-x\| < \|x'\|$.}
\end{center}
\end{figure}

Recall that $\cF_n(T,x,r)$ is the event that Conditions 
\eqref{Fn:pas_trop_de_points}, 
\eqref{Fn:discretisation}, 
\eqref{Fn:pas_loin}, and 
\eqref{Fn:tres_loin} hold,
and that Conditions \eqref{Fn:pas_loin} and \eqref{Fn:tres_loin} are deterministic.
The next lemma is a consequence of Lemma \ref{l:stone}.

\begin{lemma} \label{l:csq_stone} Let $m \in \{1,\dots,n-1\}$, $T \in\cT_m$, $r_0,\dots,r_m \in \N$.
Let $x_1,\dots,x_m$ be distinct non-zero vectors in $\R^d$.
There exists a constant $\kappa_9$ depending only on $d$ and $M$ such that
\[
\tilde \P_{0,x_1,\dots,x_m}[\cF_n(T,x,r)] \le 
\1_{\eqref{Fn:pas_loin}}
\1_{\eqref{Fn:tres_loin}}
\prod_{i=0}^m e^{M-\kappa_9 r_i^d }.
\]
\end{lemma}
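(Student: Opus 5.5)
Since \eqref{Fn:pas_loin} and \eqref{Fn:tres_loin} are deterministic, it suffices to show that
\[
\tilde\P_{0,x_1,\dots,x_m}\big[\eqref{Fn:discretisation}\cap\eqref{Fn:pas_trop_de_points}\big] \le \prod_{i=0}^m e^{M-\kappa_9 r_i^d}.
\]
Under $\tilde\P_{0,x_1,\dots,x_m}$ we have $\chi=\chi'\cup\{x_0,\dots,x_m\}$, where $\chi'$ is a homogeneous PPP of intensity $1$. We will drop condition \eqref{Fn:discretisation} and keep only the part of it we need.

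The plan is to use Lemma \ref{l:stone} with $S=\chi$, $S_f=\{x_0,\dots,x_m\}$ and $r(x_i)=r_i$. On \eqref{Fn:discretisation} the hypothesis $r_i\le r_M(x_i,\chi)$ holds, so the union $U\coloneqq\bigcup_i B(x_i,r_i)$ has volume
\[
\cL^d(U)\ge \frac{v_d}{M\kappa_8(d)}\sum_i r_i^d .
\]
This is the key use of Stone. Moreover, each $B(x_i,r_i)$ contains at most $M+1$ points of $\chi$: this follows from \eqref{Fn:pas_trop_de_points}, and also from \eqref{Fn:discretisation}, since the open ball of radius $r_M$ contains only $M$ points besides $x_i$. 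Hence
\[
|\chi'\cap U|\le (M+1)(m+1).
\]
So on the event, a PPP has few points, of order $m$, in a set of large volume.

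The main obstacle is that $U$ is random: $U$ is determined by the deterministic data $x,r$, but the validity of Stone's bound uses the event itself. That is not a real problem, because we may argue as follows. Either $\cL^d(U)<\frac{v_d}{M\kappa_8}\sum r_i^d$, and then the event is impossible; or $U$ is a fixed set with large volume, and we bound $\tilde\P[|\chi'\cap U|\le (M+1)(m+1)]$ directly. Here $|\chi'\cap U|$ is Poisson with mean $\lambda\ge c\sum r_i^d$, with $c=v_d/(M\kappa_8)$. By the exponential Chebyshev bound with parameter $t=1$ (that is, $\P[N\le a]\le e^{a}\E[e^{-N}]$),
\[
\P[N\le a]\le e^{a-\lambda(1-e^{-1})}.
\]
With $a=(M+1)(m+1)$ this gives the bound
\[
\prod_i e^{M+1-c(1-e^{-1})r_i^d}.
\]

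This yields $e^{M+1}$ per vertex instead of $e^{M}$. To get exactly $e^{M}$, I would count only the points of $\chi'$ and exclude the $x_j$ themselves. Each ball $B(x_i,r_i)$ contains $x_i$, so it has at most $M$ other points of $\chi$; hence $|\chi'\cap B(x_i,r_i)|\le M$ and $|\chi'\cap U|\le M(m+1)$. Note that the $x_j$ not equal to $x_i$ may also lie in the ball, but that only decreases the count of $\chi'$, so the bound is safe. Applying the same Chernoff bound then gives the claim with $\kappa_9=(1-e^{-1})v_d/(M\kappa_8(d))$. I expect the only delicate points to be this bookkeeping and checking that Stone's lemma applies to $\chi$ with the added Palm points. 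It does, since $\chi$ is a.s. infinite and locally finite, and the Stone inequality holds for every $y\notin S$, that is for a.e. $y$, which is enough for the volume statement.
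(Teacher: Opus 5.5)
Your proof is correct and is essentially the paper's. The paper also makes the Stone volume bound deterministic by observing that a nonzero probability forces an admissible $S\supset\{0,x_1,\dots,x_m\}$ to exist, which is your dichotomy. It then gets $|\chi'\cap U|\le (m+1)M$ by removing the $m+1$ Palm points from the $(m+1)(M+1)$ allowed by \eqref{Fn:pas_trop_de_points}, and concludes with $\tilde\P[Z\le a]\le e^{a}\tilde\E[e^{-Z}]$, giving the same $\kappa_9=(1-e^{-1})v_d/(M\kappa_8(d))$ as yours.

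Your parenthetical claim that \eqref{Fn:discretisation} alone caps each ball at $M+1$ points is loose: when $r_i=r_M(x_i)$, ties among the fixed $x_j$ can add points. Since you rely on \eqref{Fn:pas_trop_de_points}, as the paper does, nothing is affected.
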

\begin{proof} We may assume that the probability $\tilde \P_{0,x_1,\dots,x_m}[\cF_n(T,x,r)]$ is non zero.
This implies two things:
\begin{enumerate}
\item The deterministic Conditions \eqref{Fn:pas_loin} and \eqref{Fn:tres_loin} hold.
\item There exists an infinite and locally finite subset $S \subset \R^d$ containing $S_f = \{0,x_1,\dots,x_m\}$ such that,
for all $v_i$ in $T$, $r_i \le r_M(x_i, S)$. This comes from Condition \eqref{Fn:discretisation}. By Lemma \ref{l:stone} we deduce
\begin{equation}\label{e:stone_consequence}
\cL^d\left(\bigcup_{0 \le i \le m} B(x_i,r_i)\right) \ge \frac 1 {M\kappa_8(d)} \sum_{0 \le i \le m} \cL^d(B(x_i,r_i)).
\end{equation}
\end{enumerate}
We are moreover reduced to prove the existence of a constant $\kappa_9$ depending only on $d$ and $M$ such that
\[
\tilde \P_{0,x_1,\dots,x_m}[\cF_n(T,x,r)] \le e^{(m+1)M}e^{-\kappa_9 \sum_{0 \le i \le m}r_i^d }.
\]
On the left-hand side, this is the probability under the Palm measure $\tilde \P_{0,x_1,\dots,x_m}$ of an event depending on~$\chi$.
By Slivnyak's theorem, this equals the probability of the same event evaluated at $\chi \cup \{0,x_1,\dots,x_m\}$ under $\widetilde\P$.
We consider the latter point of view in this proof.
By \eqref{Fn:pas_trop_de_points}, on the event $\cF_n(T,x,r)$, we have
\[
\left|\big(\chi \cup \{0,x_1,\dots,x_m\} \big) \cap \bigcup_{0 \le i \le m} B(x_i,r_i)\right| \le (m+1)(M+1)
\]
and thus
\[
Z := \left|\chi  \cap \bigcup_{0 \le i \le m} B(x_i,r_i)\right| \le (m+1)M.
\]
The random variable $Z$ is a Poisson random variable whose parameter is lower-bounded, thanks to \eqref{e:stone_consequence}, by
\[
\frac 1 {M\kappa_8(d)} \sum_{0 \le i \le m} \cL^d(B(x_i,r_i)) = \frac {v_d} {M\kappa_8(d)} \sum_{0 \le i \le m}r_i^d
\]
where $v_d$ denotes the Lebesgue measure of the unit ball.
Thus, 
\[
\tilde \P[Z \le (m+1)M] \le e^{(m+1)M} \tilde \E[e^{- Z}]
\]
where $\tilde \E $ is the expectation associated to $\tilde \P$.
The lemma follows.
\end{proof}

\begin{lemma} \label{l:int} Let $m \in \{1,\dots,n-1\}$, $T \in\cT_m$, $r_0,\dots,r_m \in \N$. Then
\[
\int_{(\R^d)^m} \1_{\eqref{Fn:pas_loin}} \d x_1 \cdots \d x_m \le \prod_{0 \le i \le m} \overline v_d^M(r_i+1)^{dM}
\]
where $\overline v_d = \max(v_d,1)$ and $v_d$ is the volume of the unit ball.
\end{lemma}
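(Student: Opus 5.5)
The plan is to integrate out the positions $x_1,\dots,x_m$ one vertex at a time, starting from the leaves of $T$ and moving towards the root. This uses only the tree structure of Condition \eqref{Fn:pas_loin}.

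Fix an ordering of the vertices compatible with the tree, for instance by decreasing depth, so that each vertex is integrated out before its parent. Let $v_j$ be a leaf of $T$ with parent $v_i$. The variable $x_j$ appears in $\1_{\eqref{Fn:pas_loin}}$ only through the single constraint $\|x_j-x_i\|\le r_i+1$. By Fubini, integrating in $x_j$ with the other variables fixed therefore gives
\[
\int_{\R^d} \1_{\{\|x_j-x_i\|\le r_i+1\}}\,\d x_j = v_d (r_i+1)^d .
\]
The remaining indicator is exactly $\1_{\eqref{Fn:pas_loin}}$ for the tree $T$ with $v_j$ removed. Iterating this leaf removal until only the root $v_0$ is left (its position is fixed at $x_0=0$ and is not integrated) yields
\[
\int_{(\R^d)^m} \1_{\eqref{Fn:pas_loin}} \,\d x_1\cdots \d x_m \;=\; \prod_{0\le i\le m} \big(v_d (r_i+1)^d\big)^{c_i},
\]
where $c_i$ is the number of children of $v_i$ in $T$. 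Relaxing the distinctness of the $x_i$ only increases the integral, so this is harmless.

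To reach the stated bound, I need $c_i\le M$ for every vertex. Given that, I use $v_d(r_i+1)^d\le \overline v_d (r_i+1)^d$ and $\overline v_d (r_i+1)^d\ge 1$ (because $\overline v_d\ge 1$ and $r_i\ge 0$). Together these give $(v_d (r_i+1)^d)^{c_i}\le \overline v_d^M (r_i+1)^{dM}$, and taking the product over $i$ concludes.

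The main obstacle is the out-degree condition $c_i\le M$. It is not a consequence of \eqref{Fn:pas_loin} for an arbitrary rooted tree in $\cT_m$. It does hold for the trees that actually arise in Lemma \ref{l:en_tree_v1}: every child $w$ of $v$ is first reached from $x(v)$, so $x(w)$ is the $k$-th nearest neighbour of $x(v)$ for some $k\le M$, and distinct children are distinct points of $\chi$. Hence $v$ has at most $M$ children. I would therefore add to Lemmas \ref{l:en_tree_v1} and \ref{l:en_tree_v2} that $T$ has out-degrees at most $M$, and restrict the sum over $T\in\cT_m$ in Lemma \ref{l:en_tree} to such trees. This is legitimate because the union bound only needs to cover the trees produced on $\cE_n$, and Lemma \ref{l:int} is then applied only to them. (Alternatively, condition \eqref{Fn:pas_trop_de_points} could rule out more than $M$ children, but that is a probabilistic condition and is not available inside this deterministic integral, so the restriction on $\cT_m$ is the cleaner route.)
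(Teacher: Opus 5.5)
Your proof is the paper's argument: integrate out the children of a vertex that has no grandchildren, which gives a factor $(v_d(r_i+1)^d)^{c_i}$, iterate up to the root, and conclude using $\overline v_d(r_i+1)^d\ge 1$ together with $c_i\le M$. Your caveat is also correct: the paper simply asserts that each node has at most $M$ children, and this fails for a general $T\in\cT_m$ (a star with more than $M$ leaves and $r_0$ large violates the stated bound), so the sum over trees in Lemma \ref{l:en_tree} should indeed be restricted to trees of out-degree at most $M$, which the trees produced by the walk are; incidentally, \eqref{Fn:pas_trop_de_points} could not have done this job anyway, since children only satisfy $\|x_j-x_i\|\le r_i+1$ and so need not lie in $B(x_i,r_i)$.
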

\begin{proof} 
Consider some node $w_i$ which has at least one child but no grand-child.
The integral over $x_j$ corresponding to children of $w_i$ is 
\[
\left(v_d(r_i+1)^d\right)^{\text{number of children of }w_i} \le \left(\overline v_d(r_i+1)^d\right)^{\text{number of children of }w_i} .
\]
Iterating this argument yields 
\[
\int_{(\R^d)^m} \1_{\eqref{Fn:pas_loin}} \d x_1 \cdots \d x_m \le \prod_{0 \le i \le m} \left(\overline v_d(r_i+1)^d\right)^{\text{number of children of }w_i}.
\]
Upper-bounding the number of children of each node by $M$ gives the result.
\end{proof}

The cardinality of $\cT_m$ is less than the number of planar rooted trees with $m$ edges which is the Catalan number $\frac{1}{m+1}\binom{m}{2m}$.
It is therefore upper-bounded by $4^m$.
Using successively Lemma \ref{l:en_tree}, Lemma \ref{l:csq_stone}, Lemma \ref{l:int}, the bound $|\cT_m|\le 4^m$, the bound
\[
1_{\eqref{Fn:tres_loin}} \le e^{-An} \prod_{i=0}^m e^{M(r_i+1)}
\]
and rearranging we obtain
\begin{align*}
\P[\cE_n] 
 & \le \sum_{m=1}^{n-1} \sum_{T \in \cT_m} \int_{(\R^d)^m} \d x_1 \cdots \d x_m  \sum_{r\in \N^{m+1}}  \tilde \P_{0,x_1,\dots,x_m}[\cF_n(T,x,r)] \\
 & \le \sum_{m=1}^{n-1} \sum_{T \in \cT_m} \int_{(\R^d)^m} \d x_1 \cdots \d x_m  \sum_{r\in \N^{m+1}}  \1_{\eqref{Fn:pas_loin}} \1_{\eqref{Fn:tres_loin}}\prod_{i=0}^m e^{M-\kappa_9 r_i^d }\\
 &  \le \sum_{m=1}^{n-1} \sum_{T \in \cT_m}   \sum_{r\in \N^{m+1}}   \1_{\eqref{Fn:tres_loin}}  \prod_{i=0}^m e^{M-\kappa_9 r_i^d }\overline v_d^M(r_i+1)^{dM}\\
 &  \le \sum_{m=1}^{n-1}    \sum_{r\in \N^{m+1}}   \1_{\eqref{Fn:tres_loin}}  \prod_{i=0}^m 4e^{M-\kappa_9 r_i^d }\overline v_d^M(r_i+1)^{dM}\\
 &  \le  e^{-An}  \sum_{m=1}^{n-1}    \sum_{r\in \N^{m+1}}  \prod_{i=0}^m e^{M(r_i+1)} 4e^{M-\kappa_9 r_i^d }\overline v_d^M(r_i+1)^{dM}\\
 & = e^{-An}  \sum_{m=1}^{n-1}   \left(\sum_{r \ge 0}  4e^{M(r+2)-\kappa_9 r^d }\overline v_d^M(r+1)^{dM}\right)^{m+1}.
\end{align*}
To sum up, there exists a constant $c > 1$ depending only on $d$ and $M$ such that
\begin{align*}
 \P[\cE_n] 
  &  \le  e^{-An}   \sum_{m=1}^{n-1} c^{m+1} = e^{-An} \frac{c^{n+1}}{c-1} .
\end{align*}
Choosing $A$ large enough (depending only on $d$ and $M$), using \eqref{e:decoupage_initial} and Theorem \ref{t:V}, we deduce the existence of $C_3, C_4 >0$
depending only on $d$, $M$ and $\mu (\{M\})$, such that
\[
\forall \ell \ge 0, \P[\L \ge \ell] \le C_3e^{-C_4 \ell}.
\]

\section{Number of steps for discovering the whole range $\V$}
\label{s:S}

The aim of this last section is to prove Theorem \ref{t:S}.
The  proof relies on the construction of a favorable Poissonian environment, in which $(X_n)_{n\geq 0}$ spends a long time before exploring new points. Geometric properties of the set of points that the random walk can reach in one step in this environment play a crucial role. The choice of this favorable Poissonian environment is thus heavily linked to the support of $\mu$. This is the reason why we do not obtain a control on the tail distribution of $\S$ that holds for every $\mu$: we rather consider a fixed simple label distribution, namely $\mu_0 \coloneqq (1-p) \delta_1 + p \delta_2$, for a fixed $p \in (0,1/2)$, and prove that the tail distribution of $\S$ is at least polynomial for this particular label distribution. This example prevents a result analogous to Theorems \ref{t:V} or \ref{t:L} from holding for $\S$.

\subsection{A favorable Poissonian environment}
\label{s:FavorableEnv}

The main part of the proof consists of constructing a favorable Poissonian environment in which $(X_n)_{n\geq 0}$ spends a long time before exploring new points. To this aim, let $e_1$ denote the first vector of the canonical basis of $\R^d$ and let $(z_i)_{i\ge 0}$ be points on the line $\R e_1$ defined by:
\[
\begin{cases} 
z_{3i}\cdot e_1 = 11i \\
z_{3i+1}\cdot e_1 = 11i+2 \\
z_{3i+2}\cdot e_1 = 11i+6.
\end{cases}
\]
Let $B_i=B(z_i,1/5)$ be the ball in $\R^d$ with center $z_i$ and radius $1/5$. Fix $L\ge 1$ and let $\mathcal{C}_L$ be the cylinder 
\[
\mathcal{C}_L \coloneqq \{y=(y_1,\ldots,y_d)\in \R^d : y_1\in[-7, 11L+7], y_k\in [-7,7] \text{ for } k\ge 2\}.
\]
We now consider the following event on the Poisson process $\chi$:
\[
\cH_L \coloneqq \{\forall 0\le i \le 3L, |\chi\cap B_i|=1 \text{ and } |\chi\cap \mathcal{C}_L|=3L+1\},
\]
{\it i.e.}, exactly one point of $\chi$ falls in each ball $B_i$ for $i\le 3L$, and no point of $\chi$ falls in $\mathcal{C}_L$ outside these balls $B_i$ (see Figure \ref{fig:bonenvironnement} for an illustration). If $\cH_L$ occurs, let $x_i$, for $0\le i \le 3L$, denote the unique point of $\chi$ in the ball $B_i$ (since we work with the Palm measure, note that $x_0=0$). Our first lemma lower-bounds the probability that such an environment occurs.

\begin{figure}
    \centering
\includegraphics[width=13cm]{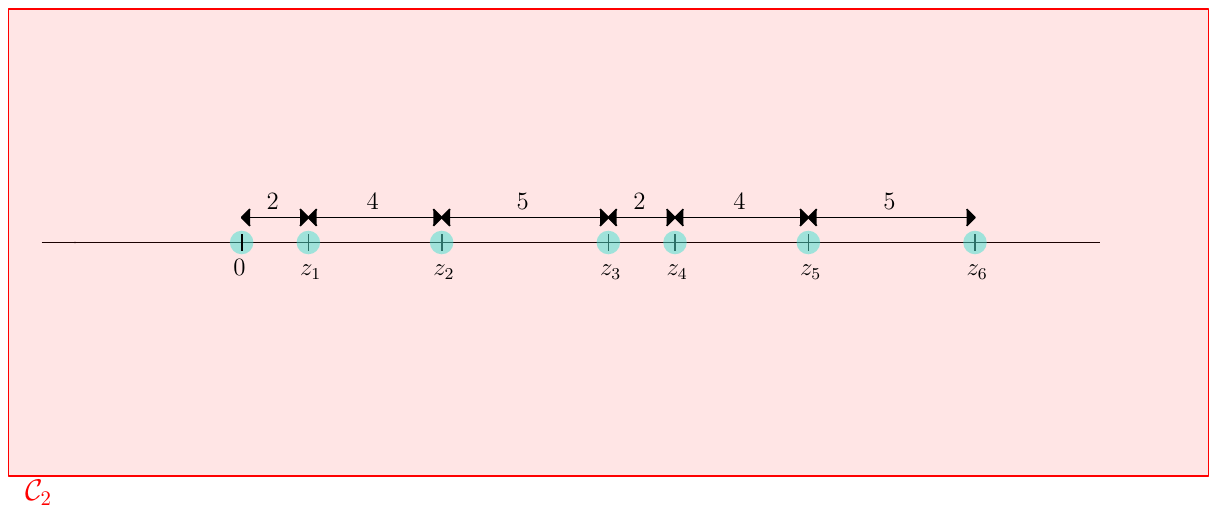}
\caption{\small{Illustration of required environment for $L=2$. We impose that exactly one point $x_i$ of $\chi$ falls in each blue ball and no point of $\chi$ falls in the red region.  }} \label{fig:bonenvironnement}
\end{figure}

\begin{lemma}
\label{l:probaH}
There exist constants $\kappa_{10},\kappa_{11} \in (0,+\infty)$, depending only on $d$, such that
\[
\forall L\ge 1 \qquad \P ( \cH_L ) \geq \kappa_{10} e^{-\kappa_{11}L} ~.
\]
\end{lemma}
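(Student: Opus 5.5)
The event $\cH_L$ only involves the restriction of $\chi$ to the cylinder $\mathcal{C}_L$, so we compute its probability directly from the Poisson structure. Under the Palm measure, $\chi = \chi' \cup \{0\}$, where $\chi'$ is a homogeneous PPP of intensity $1$. The point $0$ lies in $B_0$, so for $i=0$ the requirement $|\chi\cap B_0|=1$ becomes $|\chi'\cap B_0|=0$. The first step is to check that the balls $B_i$, $0\le i\le 3L$, are pairwise disjoint and contained in $\mathcal{C}_L$. Their centers lie on $\R e_1$ with consecutive gaps at least $2$ (the gaps are $2,4,5$) and their radii are $1/5$. Their first coordinates lie in $[-1/5, 11L+1/5]$ and their other coordinates in $[-1/5,1/5]$, so they sit well inside $\mathcal{C}_L$.

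The second step is to rewrite $\cH_L$ in terms of $\chi'$. It is the intersection of the following independent events:
\[
\{|\chi'\cap B_0|=0\}, \qquad \{|\chi'\cap B_i|=1\} \text{ for } 1\le i\le 3L, \qquad \{\chi'\cap (\mathcal{C}_L\setminus \textstyle\bigcup_{i} B_i)=\emptyset\}.
\]
These events concern disjoint regions, so they are independent. Write $b=v_d(1/5)^d$ for the volume of each ball. The probability factorizes as
\[
e^{-b}\,\big(b\,e^{-b}\big)^{3L}\, e^{-(\cL^d(\mathcal{C}_L)-(3L+1)b)} = b^{3L}\, e^{-\cL^d(\mathcal{C}_L)}.
\]

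The last step is to bound the volume of the cylinder:
\[
\cL^d(\mathcal{C}_L)=(11L+14)\,14^{d-1}.
\]
This gives
\[
\P(\cH_L)= e^{-14^d}\exp\!\big(-L\,(11\cdot 14^{d-1}+3\log(1/b))\big).
\]
Here $b<1$ (one can check $v_d 5^{-d}<1$ for every $d$), so $\log(1/b)>0$. We may therefore take $\kappa_{10}=e^{-14^d}$ and $\kappa_{11}=11\cdot 14^{d-1}+3\log(1/b)$, both depending only on $d$. The argument contains no real obstacle. The only points needing care are the Palm-measure bookkeeping for the point at the origin and the check that the balls are disjoint and inside $\mathcal{C}_L$.
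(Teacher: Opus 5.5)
Your proof is correct and follows the same route as the paper: factorize $\P(\cH_L)$ over disjoint regions using the independence properties of the Poisson process, then use that $\cL^d(\mathcal{C}_L)$ grows linearly in $L$. The only difference is that you compute $\P(\cH_L)=b^{3L}e^{-\cL^d(\mathcal{C}_L)}$ exactly, with explicit constants and with the Palm bookkeeping for $B_0$ and the disjointness of the balls checked. The paper instead stops at a lower bound, replacing the volume of $\mathcal{C}_L$ minus the balls by the volume of $\mathcal{C}_L$.
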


\begin{proof}
Let $\tilde{\mathcal{C}}_L \coloneqq \mathcal{C}_L \setminus \left(\{0\}\cup \bigcup_{i=1}^{3L}B_i\right)$. Since the sets $B_i$ (for $1\le i \le 3L$) and $\tilde{\mathcal{C}}_L$ are pairwise disjoint, the independence properties of the Poisson process imply:
\begin{align*}
\P ( \cH_L ) &= \P(\forall 1\le i \le 3L, |\chi\cap B_i|=1) \cdot \P(|\chi\cap \tilde{\mathcal{C}}_L|=0) \\
&= \P( |\chi\cap B_1|=1)^{3L} \exp(-\operatorname{Vol}(\tilde{\mathcal{C}}_L)) \\
&\ge \P( |\chi\cap B_1|=1)^{3L} \exp(-\operatorname{Vol}(\mathcal{C}_L)).
\end{align*}
Since $\operatorname{Vol}(\mathcal{C}_L)$ grows linearly with $L$, we obtain $\P(\cH_L) \ge \kappa_{10} e^{-\kappa_{11}L}$ for some constants $\kappa_{10},\kappa_{11} > 0$ depending on $d$.
\end{proof}

If $\mu = \mu_0 \coloneqq (1-p) \delta_1 + p \delta_2$ for $p \in (0,1)$, then on the event $\cH_L$, the process $(X_n)_{n\ge 0}$ can be described using a specific Markov chain.

\begin{prop}\label{prop:defY}
Let $(Y^L_n)_{n\ge 0}$ be the Markov chain on $\{0,1,\ldots,3L\}$ with $Y^L_0=0$ and transition probabilities:
\[
\P(Y^L_{n+1}=j \mid Y^L_n=i) = \begin{cases} 
(1-p)\mathbf{1}_{j=1}+p\mathbf{1}_{j=2} & \text{if } i=0 \\
(1-p)\mathbf{1}_{j=i+1}+p\mathbf{1}_{j=i-1} & \text{if } i\equiv 0 \pmod 3 \text{ and } 0<i<3L \\
(1-p)\mathbf{1}_{j=i-1}+p\mathbf{1}_{j=i+1}  & \text{if } i\not\equiv 0 \pmod 3 \text{ and } 0<i<3L \\
(1-p)\mathbf{1}_{j=3L-1}+p\mathbf{1}_{j=3L-2}  & \text{if } i=3L.
\end{cases}
\]
If $\mu = \mu_0$, then on the event $\cH_L$, the walk $(X_n)_{n\ge 0}$ has the law of $(x_{Y^L_n})_{n\ge 0}$. 
\end{prop}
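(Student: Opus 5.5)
On $\cH_L$ the environment is frozen near the trajectory, so the plan is to compute the first and second nearest neighbours of each $x_i$, $0\le i\le 3L$, and check that they are $x_j$'s with the indices prescribed by the transition matrix of $Y^L$. Given $\chi$, the walk is a Markov chain on $\chi$: from $x$ it moves to $v_1(x)$ with probability $1-p$ and to $v_2(x)$ with probability $p$, using the labels $K_n$, which are i.i.d. and independent of $\chi$. If, for every $i$, $v_1(x_i)=x_{\phi_1(i)}$ and $v_2(x_i)=x_{\phi_2(i)}$ with $\phi_1,\phi_2$ read off from the transition matrix, then an induction on $n$ gives the result. One simply sets $Y^L_{n}=\phi_{K_n}(Y^L_{n-1})$, and then $X_n=x_{Y^L_n}$ for all $n$.

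\textbf{Distance bookkeeping.} Since $\|x_i-z_i\|\le 1/5$, the distance $\|x_i-x_j\|$ lies within $2/5$ of $|z_i\cdot e_1-z_j\cdot e_1|$. Every point of $\chi\setminus\{x_0,\dots,x_{3L}\}$ lies outside $\mathcal C_L$. Such a point is at distance at least $7-1/5=6.8$ from any $x_i$: it either has a transverse coordinate of modulus larger than $7$, or a first coordinate outside $[-7,11L+7]$. The gaps $z_{i+1}\cdot e_1-z_i\cdot e_1$ cycle through $2,4,5$, so only a few cases arise.
\begin{itemize}
\item For $i\equiv 0 \pmod 3$ with $0<i<3L$: the neighbours are at about $2$ (right), $5$ (left), $6$ (second right) and $9$ (second left). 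So $v_1=x_{i+1}$ (distance in $[1.6,2.4]$), and $v_2=x_{i-1}$ (distance in $[4.6,5.4]$, below $5.6$ and below $6.8$).
\item For $i\equiv 1$: about $2$ (left), $4$ (right), $7$ (second left) and $9$ (second right). So $v_1=x_{i-1}$ and $v_2=x_{i+1}$.
\item For $i\equiv 2$: about $4$ (left), $5$ (right), $6$ (second left) and $7$ (second right). So $v_1=x_{i-1}$, and $v_2=x_{i+1}$ because $[4.6,5.4]$ lies below $5.6$ and $6.8$.
\item For $i=0$: the points $x_1$ and $x_2$ are at about $2$ and $6$, and everything else is at distance more than $6.8$. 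So $v_1=x_1$ and $v_2=x_2$.
\end{itemize}
In each case the margins ($\ge 0.2$) exceed the accumulated perturbations, and this matches the matrix.

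\textbf{The main obstacle is the endpoint $i=3L$.} Here $v_1(x_{3L})=x_{3L-1}$ at distance about $5$. However, $x_{3L-2}$ is at distance about $9$, while points just beyond the end $11L+7$ of the cylinder may be at distance about $6.8$. So $v_2(x_{3L})=x_{3L-2}$ is not forced by $\cH_L$ as written. The first thing I would try is to enlarge the cylinder on the right, e.g.\ to first coordinates up to $11L+10$ and transverse radius $10$; this changes only constants in Lemma \ref{l:probaH}. I would then recheck that $9.4$ is below every outside distance. Alternatively, for Theorem \ref{t:S} only the law of the walk up to the hitting time of $x_{3L}$ is needed. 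Then $\S$ is at least the hitting time of $3L$ by $Y^L$, and the identification $X_n=x_{Y^L_n}$ for $n$ up to that time uses only the interior and $i=0$ cases above, which is all that is required.
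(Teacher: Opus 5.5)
Your approach is the paper's: read off $v_1(x_i)$ and $v_2(x_i)$ from the geometry of $\cH_L$, then drive both walks with the same labels. Your bookkeeping for $i=0$ and $0<i<3L$ is correct. It is also more explicit than the paper's proof, which only asserts the orderings and says that the boundary cases $i=0$ and $i=3L$ ``follow similarly''.

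Your objection at $i=3L$ is right, and it is a defect of the statement and of the paper's proof, not of your argument.
\begin{itemize}
\item On $\cH_L$ one has $\|x_{3L}-x_{3L-2}\|\ge 8.6$. But $\mathcal{C}_L$ extends only $7$ beyond $z_{3L}$: to the right, and also transversally when $d\ge 2$.
\item Since $\cH_L$ depends only on $\chi\cap\mathcal{C}_L$, the points of $\chi$ outside $\mathcal{C}_L$ are still Poisson given $\cH_L$.
\item Hence, with positive conditional probability, $B(x_{3L},8.6)\setminus\mathcal{C}_L$ contains a point. Then $v_2(x_{3L})\notin\{x_0,\dots,x_{3L}\}$.
\item In that case the walk leaves $\{x_0,\dots,x_{3L}\}$ with probability $p$ after its first visit to $x_{3L}$, which $x_{Y^L}$ never does.
\end{itemize}
So the proposition, as literally stated, is false.

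Both of your repairs work.
\begin{itemize}
\item \emph{Enlarging the cylinder.} Replace $\mathcal{C}_L$ by $[-7,11L+10]\times[-10,10]^{d-1}$. Every outside point is then at distance more than $9.8>9.4\ge\|x_{3L}-x_{3L-2}\|$ from $x_{3L}$. The other cases only improve, and the volume is still linear in $L$, so Lemma~\ref{l:probaH} holds with new constants.
\item \emph{Stopping at the hitting time.} This is all the paper actually needs. The proof of Theorem~\ref{t:S} uses the identification only up to $\sigma_L$, equivalently $Y^L$ up to $T_L$ in Lemma~\ref{l:MA2}. That involves only transitions from states $i<3L$, which you have checked.
\end{itemize}
You should commit to one of the two rather than leave it as ``the first thing I would try''.
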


\begin{proof}
If $\cH_L$ occurs, the neighbors of $x_i$ (the points in $\chi$ closest to $x_i$) are determined by the geometry of the balls $B_i$. For $1 < i < 3L$, if $i\equiv 0 \pmod 3$, then $x_{i+1}$ is the closest point and $x_{i-1}$ is the second closest. Thus $v_1(x_i)=x_{i+1}$ and $v_2(x_i)=x_{i-1}$. Conversely, if $i\equiv 1$ or $2 \pmod 3$, then $x_{i-1}$ is closer than $x_{i+1}$, so $v_1(x_i)=x_{i-1}$ and $v_2(x_i)=x_{i+1}$. The boundary cases $i=0$ and $i=3L$ follow similarly. Hence, $(X_n)_{n\ge 0}$ follows the transitions of $Y^L_n$ jumping between the points $\{x_i\}$.
\end{proof}

We now use a lemma on the exit time of $(Y^L_n)_{n\ge 0}$. Its proof, based on classical gambler's ruin estimates in biased environments, is postponed to the end of this section.

\begin{lemma}\label{l:MA2} 
Fix $p\in (0,1/2)$ and let $(Y^L_n)_{n\ge 0}$ be the Markov chain defined in Proposition \ref{prop:defY}. Let $T_L \coloneqq \inf\{n\ge 0 : Y^L_n=3L\}$. Then $T_L < \infty$ a.s. and there exist constants $\kappa_{12},\kappa_{13}>0$, depending only on $p$, such that
\[ \forall L\ge 1 \qquad \P ( T_L > e^{\kappa_{12} L} ) \geq \kappa_{13}. \]
\end{lemma}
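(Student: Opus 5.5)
The approach is to treat $(Y^L_n)_{n\ge 0}$ as a nearest-neighbour birth--death chain on $\{0,\dots,3L\}$, with one exception: from $0$ it may also jump to $2$. I will show that each excursion away from $0$ reaches $3L$ with probability exponentially small in $L$, and then apply a union bound over excursions.

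\textbf{Finiteness of $T_L$.} The chain is irreducible on the finite set $\{0,\dots,3L\}$, since every transition probability used is $p$ or $1-p$, both in $(0,1)$. Hence it hits $3L$ almost surely, so $T_L<\infty$ a.s.

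\textbf{Drift to the left.} For $0<i<3L$, write $\pi_i$ and $\lambda_i$ for the probabilities of jumping to $i+1$ and $i-1$. Then $\lambda_i/\pi_i=p/(1-p)$ if $i\equiv 0 \pmod 3$, and $\lambda_i/\pi_i=(1-p)/p$ otherwise. Over one period $\{3j,3j+1,3j+2\}$ the product of these ratios is
\[
\rho:=\frac{1-p}{p}>1 \qquad (p<1/2).
\]
I will use the scale function $\phi(0)=0$, $\phi(k)=\sum_{j=0}^{k-1}\prod_{i=1}^{j}\lambda_i/\pi_i$. This function is harmonic for the chain killed at $\{0,3L\}$. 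Each partial product is at least $c_p\rho^{\lfloor j/3\rfloor}$ for some $c_p>0$, because within a period the extra factor is at least $\min(1,p/(1-p))$. Therefore $\phi(3L)\ge c_p\rho^{L-1}$.

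\textbf{Excursion bound.} The gambler's ruin formula gives, for $a\in\{1,2\}$,
\[
\P_a\big(\text{hit }3L\text{ before }0\big)=\frac{\phi(a)}{\phi(3L)}\le C_p\,\rho^{-L},
\]
since $\phi(1)$ and $\phi(2)$ are constants depending only on $p$. Every step of the chain from $0$ starts an excursion to $1$ or $2$, and each excursion lasts at least one step. Hence, on the event $\{T_L\le N\}$, the level $3L$ is reached during one of the first $N$ excursions. By the strong Markov property and a union bound,
\[
\P(T_L\le N)\le N\,C_p\,\rho^{-L}.
\]

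\textbf{Choice of constants.} Take $N=e^{\kappa_{12}L}$ with $\kappa_{12}=\tfrac12\log\rho$. Then $\P(T_L\le e^{\kappa_{12}L})\le C_p\rho^{-L/2}$, which is at most $1/2$ for all $L\ge L_0(p)$. For the finitely many $L<L_0$, the probability $\P(T_L>e^{\kappa_{12}L})$ is positive, because the chain can oscillate between $0$ and $1$ for arbitrarily many steps with positive probability. Taking $\kappa_{13}$ to be the minimum of $1/2$ and these finitely many positive values completes the proof.

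The main obstacle is only bookkeeping: obtaining the lower bound on $\phi(3L)$ uniformly despite the period-3 pattern, and handling the non-nearest-neighbour jump $0\to 2$. Both are dealt with by the crude constants $c_p$ and $C_p$ above.
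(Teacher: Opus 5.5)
Your proof is correct. It uses the same basic mechanism as the paper: each excursion from $0$ reaches $3L$ with probability exponentially small in $L$, so exponentially many excursions, and hence steps, are needed. You implement it differently, though.

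The paper samples $Y^L$ at its visits to multiples of $3$. It then uses the explicit Dirichlet computation of Lemma~\ref{lemm:defZ} to identify the sampled chain, away from $0$, as a walk with right-probability $p$. Next it compares this chain with the reflected walk $\tilde Z$ of Lemma~\ref{lemm:defZtilde}. Finally it combines the ruin bound given by the martingale $((1-p)/p)^{\tilde Z_n}$ with the geometric law of the number of returns to $0$.

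You instead work with $Y^L$ itself. Since it is nearest-neighbour on $\{1,\dots,3L-1\}$, its scale function is explicit. The period-$3$ pattern of ratios produces the effective factor $\rho=(1-p)/p$ per period; in fact the partial products are $\ge\rho^{\lfloor j/3\rfloor}$, so $c_p=1$ works. The jump $0\to 2$ is harmless, because you only need the ruin bound from the two possible first positions $1$ and $2$ of an excursion. This makes your argument self-contained. It dispenses with Lemma~\ref{lemm:defZ} and with the comparison between the sampled chain and $\tilde Z$ at the boundary.

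The only cost is that the union bound over the first $N$ excursions halves the exponent, giving $\kappa_{12}=\frac12\log\rho$. You could recover essentially the full ruin exponent by using, as the paper does, that excursions from $0$ are i.i.d., so the number of failed ones is geometric. For Theorem~\ref{t:S} this only affects the unspecified polynomial exponent $C_6=\kappa_{11}/\kappa_{12}$.
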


\begin{proof}[Proof of Theorem \ref{t:S}] 
Recall that $\S$ denotes the last time $(X_n)_{n\ge 0}$ visits a new site. Fix $L\ge 1$ and define
$$\sigma_L:=
\begin{cases}
 \inf\{n\ge 0, X_n=x_{3L}\}\ & \text{if  $\mathcal{H}_L$ occurs}. \\
0  & \text{otherwise}.
\end{cases}$$
 Since, on the event $\mathcal{H}_L$,  $x_{3L}$ is ultimately visited by $(X_n)_{n\ge 0}$ and  has not been visited before $T_L$, we have $\S \ge \sigma_L$ a.s. Moreover, using Proposition \ref{prop:defY} and Lemma \ref{l:MA2}, we have
\[ \P( \sigma_L > e^{\kappa_{12} L} \mid \cH_L) = \P ( T_L > e^{\kappa_{12} L} ) \geq \kappa_{13}. \]
In view of Lemma \ref{l:probaH}, we deduce
\[ \P (\S \geq e^{\kappa_{12} L}) \geq \P(\cH_L) \P( \sigma_L > e^{\kappa_{12} L} \mid \cH_L) \ge \kappa_{10} \kappa_{13} e^{-\kappa_{11} L}. \]
Finally, for $n\ge 1$, choosing $L$ such that $e^{\kappa_{12} (L-1)} < n \le e^{\kappa_{12} L}$, we obtain:
\[ \P (\S \geq n) \ge \P (\S \geq e^{\kappa_{12} L}) \ge \kappa_{10} \kappa_{13} e^{-\kappa_{11} L} = \kappa_{10} \kappa_{13} e^{-\kappa_{11} } (e^{\kappa_{12}(L-1)})^{-\kappa_{11}/\kappa_{12}} \ge \kappa_{10} \kappa_{13} e^{-\kappa_{11}} n^{-\kappa_{11}/\kappa_{12}} \]
as claimed in Theorem \ref{t:S}.
\end{proof}

\subsection{Proof of Lemma \ref{l:MA2} on random walks}
Lemma \ref{l:MA2} is a direct consequence of the following two lemmas.

\begin{lemma}\label{lemm:defZ}
Fix $p\in (0,1)$ and let $(Z_n)_{n\ge 0}$ be the Markov chain on $\mathbb{Z}$ starting from $0$ with transition probabilities:
\[
\P(Z_{n+1}=j \mid Z_n=i) = \begin{cases} 
(1-p)\mathbf{1}_{j=1}+p\mathbf{1}_{j=-1} & \text{if } i= 0 \\
(1-p)\mathbf{1}_{j=i-1}+p\mathbf{1}_{j=i+1}  & \text{if } i\neq 0. \end{cases}
\]
Let $U \coloneqq \inf\{n\ge 0 : |Z_n|=3\}$. Then 
\[ \P(Z_U=3) = 1 - \P(Z_U=-3) = p. \]
\end{lemma}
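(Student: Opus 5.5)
The plan is a first-step (harmonic function) analysis of the chain $(Z_n)$ on the finite set $\{-3,\dots,3\}$, where $-3$ and $3$ are absorbing. Set $h(i) := \P_i(Z_U = 3)$ for $i \in \{-3,\dots,3\}$. Then $h(3)=1$, $h(-3)=0$, and $h$ is harmonic at the interior points. First, note that $U<\infty$ a.s.: from any interior state the chain reaches $\{\pm 3\}$ within $6$ steps with probability bounded below, so this holds by a standard geometric argument.

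By symmetry it suffices to treat $i>0$. For $i \in \{1,2\}$ the chain moves to $i+1$ with probability $p$ and to $i-1$ with probability $1-p$. The key observation is that on the positive side the chain is a biased walk, and from $0$ it enters $1$ with probability $1-p$ and $-1$ with probability $p$. Set $a := h(1)$, $b := h(-1)$ and $c := h(0)$. The first-step equation at $0$ reads $c = (1-p)a + pb$.

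On $\{0,1,2,3\}$ we use the gambler's ruin for a walk that steps up with probability $p$ and down with probability $q=1-p$. The probability, starting from $1$, of hitting $3$ before $0$ is $\alpha := \frac{1-r}{1-r^3}$ with $r=q/p$. This expression is valid when $p\neq 1/2$; the case $p=1/2$ is handled by continuity or directly, giving $\alpha=1/3$. Therefore $a = \alpha + (1-\alpha)c$.

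On $\{-3,-2,-1,0\}$ the chain at $-1$ and $-2$ moves toward $-3$ (further from $0$) with probability $p$ and toward $0$ with probability $1-p$. By the mirror symmetry $i\mapsto -i$, the probability from $-1$ of hitting $0$ before $-3$ is $1-\alpha$. Hence $b = (1-\alpha)c$.

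Substituting into the equation at $0$ gives
\[
c = (1-p)\bigl(\alpha + (1-\alpha)c\bigr) + p(1-\alpha)c,
\]
which simplifies to $c = (1-p)\alpha + (1-\alpha)c$, so $\alpha c = (1-p)\alpha$, i.e. $c = 1-p$.

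This contradicts the claimed value $p$ unless the orientation convention differs, so I have to recheck which direction carries the $p$. At $0$ the chain goes to $+1$ with probability $1-p$. Reading the transition for $i\neq 0$ again, the chain moves to $i-1$ with probability $1-p$ and to $i+1$ with probability $p$, regardless of the sign of $i$. So for negative $i$ the step toward $-3$, namely $i-1$, has probability $1-p$, not $p$, and my mirror symmetry above was wrong.

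Redoing the negative side: from $-1$ the chain goes to $-2$ with probability $1-p$ and to $0$ with probability $p$. This is a walk that moves away from $0$ with probability $1-p$. The probability from $-1$ of hitting $0$ before $-3$ is then $\beta := \frac{1-r^{-1}}{1-r^{-3}}$ with the same $r=q/p$. Since $\frac{1-r^{-1}}{1-r^{-3}} = \frac{r^2(r-1)}{r^3-1}$, we get $\beta = r^2\alpha$. The equation at $0$ becomes
\[
c = (1-p)\alpha + (1-p)(1-\alpha)c + p\beta c .
\]
The main obstacle is purely algebraic: verify that $c=p$ solves this linear equation. Solving it gives $c\bigl[1-(1-p)(1-\alpha)-p\beta\bigr] = (1-p)\alpha$. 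Write $\alpha = \frac{1}{1+r+r^2}$ and $\beta = \frac{r^2}{1+r+r^2}$, use $r=(1-p)/p$, and clear denominators. Checking $c=p$ then reduces to the identity $p(1-p)(1+r) = (1-p)$, which follows from $p(1+r)=1$.

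Finally, the case $p=1/2$ follows by continuity in $p$, or directly from the symmetric case, where $c=1/2$.
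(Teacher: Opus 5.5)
\textbf{There is a genuine gap: the value of $\beta$ is wrong, and with it the final check fails.} Your strategy is sound. You do a first-step analysis at $0$ and use gambler's ruin on each side to reduce everything to the single unknown $c=h(0)$. This is a more structured version of the paper's direct solution of the five-unknown Dirichlet system. Your $\alpha=1/(1+r+r^2)$ is correct.

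The error is on the negative side. After reflecting $j=-i$, the walk steps away from $0$ with probability $1-p$ and toward $0$ with probability $p$. So $\frac{1-r^{-1}}{1-r^{-3}}$, the ruin formula with ratio $p/(1-p)=r^{-1}$ started at distance $1$, is the probability of reaching $-3$ \emph{before} $0$. It is not the probability of returning to $0$ first. What you need is the complement,
\[
\beta \;=\; 1-\frac{r^2}{1+r+r^2}\;=\;\frac{1+r}{1+r+r^2}\;=\;\frac{p}{1-p+p^2},
\]
which you can confirm directly from $g(-1)=p+(1-p)g(-2)$ and $g(-2)=p\,g(-1)$. The case $p=1/2$ exposes the slip. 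Your $\beta$ equals $1/3$, whereas a symmetric walk from $-1$ returns to $0$ before $-3$ with probability $2/3$.

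With your stated $\beta=r^2/(1+r+r^2)$, the coefficient $1-(1-p)(1-\alpha)-p\beta$ equals $(1+pr)/(1+r+r^2)=(2-p)/(1+r+r^2)$. The equation then forces $c=(1-p)/(2-p)$, which is $1/3$ at $p=1/2$, not $p$. So your claim that the check reduces to $p(1-p)(1+r)=1-p$ does not follow from the $\beta$ you wrote; with that $\beta$ it reduces to $p(2-p)=1-p$, which is false in general.

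With the corrected $\beta$, the coefficient becomes $(1-p+pr^2)/(1+r+r^2)$, and the equation reads $c\,(1-p+pr^2)=1-p$. Using $pr=1-p$, the value $c=p$ works because $p(1-p)+p^2r^2=p(1-p)+(1-p)^2=1-p$. This is exactly your identity $p(1-p)(1+r)=1-p$, so that identity belongs to the correct $\beta$.

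Two smaller points:
\begin{itemize}
\item Writing $\alpha$ and $\beta$ in the polynomial forms above makes them valid at $r=1$, so you need no separate continuity argument for $p=1/2$.
\item Delete the sentence ``by symmetry it suffices to treat $i>0$.'' The chain has no such symmetry, and assuming one is what caused your first false start.
\end{itemize}
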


\begin{lemma}\label{lemm:defZtilde}
Fix $p\in (0,1/2)$ and let $(\tilde{Z}_n)_{n\ge 0}$ be the biased random walk on $\mathbb{N}$ with bias $2p-1$, starting from $0$ and reflected at $0$, i.e., with transition probabilities:
\[
\P(\tilde{Z}_{n+1}=j \mid \tilde{Z}_n=i) = \begin{cases} 
\mathbf{1}_{j=1} & \text{if } i= 0 \\
(1-p)\mathbf{1}_{j=i-1}+p\mathbf{1}_{j=i+1}  & \text{if } i > 0.\end{cases}
\]
For $L\ge 1$, let $\tilde{U}_L \coloneqq \inf\{n\ge 0 : \tilde{Z}_n=L\}$. Then there exist constants $\kappa_{12}, \kappa_{13} > 0$ (depending on $p$) such that for any $L\ge 1$:
\[ \P(\tilde{U}_L \ge e^{\kappa_{12} L}) \ge \kappa_{13}. \]
\end{lemma}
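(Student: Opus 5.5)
We must prove Lemma \ref{lemm:defZtilde}: for the walk $\tilde Z$ on $\N$, reflected at $0$, which steps up with probability $p<1/2$ and down with probability $1-p$, the hitting time $\tilde U_L$ of level $L$ exceeds $e^{\kappa_{12}L}$ with probability at least $\kappa_{13}$. The plan is to decompose the path into excursions from $0$. Each excursion either reaches $L$ before returning to $0$, or returns to $0$ first, and the excursions are i.i.d. by the strong Markov property. So $\tilde U_L$ is at least the number $N_L$ of excursions started before the first one that reaches $L$, and each excursion takes at least one step. It therefore suffices to show that $N_L$ is large with probability bounded below.

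First compute the probability $a_L$ that an excursion reaches $L$. The first step goes from $0$ to $1$. From $1$, the walk reaches $L$ before $0$ with probability given by the gambler's ruin formula. With $\lambda=(1-p)/p>1$,
\[
a_L=\frac{\lambda-1}{\lambda^{L}-1}\le (\lambda-1)\lambda^{-(L-1)} .
\]
This is standard: $\lambda^{x}$ is a martingale for the unreflected walk killed at $0$ and $L$, and optional stopping is valid since the hitting time is a.s. finite. So $a_L\le C\lambda^{-L}$.

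Next, $N_L$ is geometric with success probability $a_L$, so $\P(N_L\ge t)=(1-a_L)^{t}$. Take $\kappa_{12}=\tfrac12\log\lambda>0$ and $t=\lceil e^{\kappa_{12}L}\rceil$. Then $t\,a_L\le 2C\lambda^{-L/2}$, which is bounded above uniformly in $L\ge1$. Using $(1-a)^t\ge 1-ta$ when $ta$ is small, or $(1-a)^t\ge e^{-2ta}$ when $a\le 1/2$, gives a uniform lower bound $\kappa_{13}>0$. For the finitely many small $L$ where $a_L$ might exceed $1/2$, decrease $\kappa_{13}$ if needed. Note $a_L<1$ whenever $L\ge2$, and for $L=1$ one can adjust the constants directly, for instance by taking $\kappa_{12}$ small.

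Finally, $\tilde U_L\ge N_L\ge t\ge e^{\kappa_{12}L}$ on this event, since each failed excursion uses at least two steps. This concludes the proof. There is no real obstacle: the only care needed is uniformity of the constants over small $L$, which is handled by adjusting $\kappa_{13}$ as above. To deduce Lemma \ref{l:MA2}, one would then project $Y^L$ in blocks of three to $\tilde Z$ via Lemma \ref{lemm:defZ}, but that step is not needed for the present statement.
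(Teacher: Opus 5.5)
Your argument is essentially the paper's: both decompose the path into i.i.d. excursions from $0$, bound the success probability of an excursion by the gambler's ruin formula from optional stopping for $\lambda^{\tilde Z_n}$, and use that the number of failed excursions is geometric, with $t\approx e^{\kappa_{12}L}$. Two points need correcting, though neither affects the core argument for $L\ge 2$.

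\textbf{The bound on $a_L$.} Your displayed inequality $\frac{\lambda-1}{\lambda^L-1}\le(\lambda-1)\lambda^{-(L-1)}$ is equivalent to $\lambda^{L-1}(\lambda-1)\ge 1$. This fails for small $L$ when $p$ is close to $1/2$. At $L=2$ the left side is $a_2=p$, while the right side is $(1-2p)/(1-p)$, which is smaller than $p$ as soon as $p>(3-\sqrt5)/2$. Use instead $a_L\le\lambda^{1-L}$, which holds for every $L\ge1$ because $\lambda^L-1\ge\lambda^L-\lambda^{L-1}$. This gives $a_L\le C\lambda^{-L}$ with $C=\lambda$, and the rest of your computation goes through unchanged.

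\textbf{The case $L=1$.} Your claim that $L=1$ can be handled by taking $\kappa_{12}$ small is wrong. The walk starts at $0$ and is forced to $1$ at the first step, so $\tilde U_1=1$ deterministically. Hence $\P(\tilde U_1\ge e^{\kappa_{12}})=0$ for every $\kappa_{12}>0$, and the statement as written is false at $L=1$. The paper's proof also only covers $L\ge 2$ (it begins with ``For $L\ge2$''). This is harmless downstream, since only large $L$ matters for Theorem~\ref{t:S}.

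For $L\ge2$ there are no other exceptional values to treat separately. Indeed $a_L$ is decreasing in $L$ and $a_2=p<1/2$, so your bound $(1-a_L)^t\ge e^{-2ta_L}$ applies uniformly. You should therefore state and prove the lemma for $L\ge2$, or replace $e^{\kappa_{12}L}$ by $e^{\kappa_{12}(L-1)}$, rather than claim the $L=1$ case can be rescued by adjusting constants.
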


The proofs of these two lemmas contain no original elements and are likely already present in the literature. However, as we were unable to find a precise reference, we provide self-contained proofs here for the sake of completeness.

\begin{proof}[Proof of Lemma \ref{lemm:defZ}] 
Let $f(i) \coloneqq \P(Z_U=3 \mid Z_0=i)$. Then $f$ satisfies the following discrete Dirichlet problem:
\[
f(i) = \begin{cases} 
(1-p)f(1)+pf(-1) & \text{if } i= 0 \\
(1-p)f(i-1)+pf(i+1) & \text{if } i\in \{-2,-1,1,2\}\\
1 & \text{if } i=3\\
0 & \text{if } i=-3. \end{cases}
\]
A straightforward calculation shows that:
\[
(1-p+p^2)f(i) = \begin{cases} 
p^3 & \text{if } i=-2\\
p^2 & \text{if } i=-1\\
p(1-p+p^2) & \text{if } i=0\\
p & \text{if } i=1\\
p(2-2p+p^2) & \text{if } i=2 .
 \end{cases}
\]
In particular, $f(0)=p$.
\end{proof}

\begin{proof}[Proof of Lemma \ref{lemm:defZtilde}]
Note that the bias $2p-1$ of the random walk $(\tilde{Z}_n)_{n\ge0}$ is strictly negative since $p < 1/2$. For $L\ge 2$, let $\tau_L \coloneqq \inf\{n > 0 : \tilde{Z}_n \in \{0,L\}\}$. An application of the Optional Stopping Theorem to the martingale $M_n = (\frac{1-p}{p})^{\tilde{Z}_n}$ yields the classical ruin probability:
 \[\forall L\ge 2, \qquad \P(\tilde{Z}_{\tau_L}=L \mid \tilde{Z}_0=1) \le \exp(-\kappa_{12} L) \]
for some $\kappa_{12} > 0$ depending on $p$. If $H_L$ denotes the number of visits to $0$ by $\tilde{Z}$ before reaching $L$, then $H_L$ follows a geometric distribution with parameter $q_L \coloneqq \P(\tilde{Z}_{\tau_L}=L \mid \tilde{Z}_0=1)$. Clearly, $\tilde{U}_L \ge H_L$. Thus, for any $k \ge 1$:
\[ \P(\tilde{U}_L \ge k) \ge \P(H_L \ge k) = (1-q_L)^{k-1} \ge (1-e^{-\kappa_{12} L})^{k-1}. \]
Setting $k = \lceil e^{\kappa_{12} L} \rceil$, we obtain:
\[ \P(\tilde{U}_L \ge e^{\kappa_{12} L}) \ge (1-e^{-\kappa_{12} L})^{e^{\kappa_{12} L}} \ge \kappa_{13}, \]
where $\kappa_{13} > 0$ is a constant, since $(1-1/x)^x \to 1/e$ as $x \to \infty$.
\end{proof}

\begin{proof}[Proof of Lemma \ref{l:MA2}]
Consider the Markov chain $(Y^L_n)_{n\ge 0}$ only at the times when it visits states of the form $3i$ for $0\le i\le L$. Let $(\tilde{Y}^L_k)_{k\ge 0}$ denote this sampled process with values in $\{0,3,6,\dots,3L\}$. This process is itself a Markov chain. According to Proposition \ref{prop:defY} and Lemma \ref{lemm:defZ}, its transition probabilities for $0 < i < L$ satisfy:
\[ \P(\tilde{Y}_{k+1}=3j \mid \tilde{Y}_k=3i) = (1-p)\mathbf{1}_{j=i-1}+p\mathbf{1}_{j=i+1}. \]
At the boundary $i=0$, the chain moves to $3$ or stays at $0$ with probabilities that favor $0$. Thus, $(\tilde{Y}^L_k)_{k\ge 0}$ is stochastically dominated by the reflected biased random walk $(3\tilde{Z}_k)_{k\ge 0}$ from Lemma \ref{lemm:defZtilde}. Let $\tilde{T}_L \coloneqq \inf\{k \ge 0 : \tilde{Y}_k^L = 3L\}$. By Lemma \ref{lemm:defZtilde}, we have:
\[ \P(\tilde{T}_L \ge e^{\kappa_{12} L}) \ge \P(\tilde{U}_L \ge e^{\kappa_{12} L}) \ge \kappa_{13}. \]
Since each step of $\tilde{Y}^L$ corresponds to at least one step of $Y^L$, we have $T_L \ge \tilde{T}_L$, which concludes the proof.
\end{proof}

\end{document}